\definecolor{gr}{rgb}{0.1, .5 , .10}
\newtheorem{theorem}{Theorem}[section]
\newtheorem{theorem*}{Theorem}
\newtheorem{corollary}[theorem]{Corollary}
\newtheorem{corollary*}[theorem*]{Corollary}
\newtheorem{lemma}[theorem]{Lemma}
\newtheorem{proposition}[theorem]{Proposition}
\newtheorem*{proposition*}{Proposition}
\theoremstyle{definition}
\newtheorem{remark}[theorem]{Remark}
\newtheorem*{remark*}{Remark}
\newtheorem{question}[theorem]{Question}
\newtheorem*{question*}{Question}
\newtheorem*{conjecture*}{Conjecture}
\newtheorem{example}[theorem]{Example}
\newtheorem*{example*}{Example}
\newtheorem*{notation*}{Notation}
\newtheorem*{claim*}{Claim}
\newtheorem*{notationremark*}{Notation and Remark}
\numberwithin{equation}{theorem}
\def\Ext{\operatorname{Ext}}
\def\rad{\operatorname{rad}}
\def\soc{\operatorname{soc}}
\def\End{\operatorname{End}}
\def\Ltensor{\otimes^\mathsf{L}}
\def\Db{\mathsf{D}^{\rm b}}
\def\D{\operatorname{\mathsf{D}}}
\def\mod{\operatorname{\mathsf{mod}}}
\def\smod{\operatorname{\underline{\mathsf{mod}}}}
\def\proj{\operatorname{\mathsf{proj}}}
\def\Db{\mathsf{D^b}}
\def\Kb{\mathsf{K^b}}
\def\silt{\operatorname{\mathsf{silt}}}
\def\tilt{\operatorname{\mathsf{tilt}}}
\newcommand{\dsilt}[2]{\operatorname{\mathsf{#1_{\mbox{{\scriptsize $#2$}}}-silt}}}
\def\AA{\mathbb{A}}
\def\B{\mathcal{B}}
\def\C{\mathcal{C}}
\def\D{\mathbb{D}}
\def\EE{\mathbb{E}}
\def\G{\mathsf{G}}
\def\H{\mathcal{H}}
\def\II{\mathcal{I}}
\def\RR{\mathcal{R}}
\def\Z{\mathbb{Z}}
\newcommand{\op}{{\rm op}}
\newcommand{\tr}{{\rm tr}}
\newcommand{\dbracket}[1]{[\! [ #1 ] \! ]}
\newcommand{\old}[1]{{\color{red} #1}}
\begin{document}
\setlength{\baselineskip}{15pt}
\title{Examples of tilting-discrete symmetric algebras}
\author{Takuma Aihara}
\address{Department of Mathematics, Tokyo Gakugei University, 4-1-1 Nukuikita-machi, Koganei, Tokyo 184-8501, Japan}
\email{aihara@u-gakugei.ac.jp}

\keywords{tilting-discrete, tilting-connected, $\tau$-tilting finite, symmetric algebra, Brauer graph algebra, trivial extension algebra}
\thanks{2020 {\em Mathematics Subject Classification.} 16E35, 16G20, 16G60}
\thanks{TA was partly supported by JSPS KAKENHI Grant Number JP25K06950.}
\begin{abstract}
We give several examples of tilting-discrete symmetric algebras; in particular, one explores which algebra has tilting-discrete trivial extension.
We provide a counter example of the conjecture stating any $\tau$-tilting finite symmetric algebra is tilting-discrete.
Also, we discuss the tilting-disconnectedness of symmetric algebras and give new examples of tilting-disconnected symmetric algebras.
\end{abstract}
\maketitle
\section{Introduction}

The tilting-discreteness of a (finite dimensional) symmetric algebra is a finiteness condition on tilting complexes; that is, a tilting-discrete symmetric algebra has only finitely many tilting complexes of each length \cite{Ai}.
In the case, we can `essentially' obtain all tilting complexes via tilting mutation; 
roughly speaking, such a symmetric algebra is said to be tilting-connected.
Thus, we can grasp the whole picture of tilting complexes over a tilting-discrete symmetric algebra.
Typical examples of tilting-discrete symmetric algebras are representation-finite symmetric algebras \cite{Ai}, Brauer graph algebras whose Brauer graphs admit at most one cycle of odd length and none of even length \cite{AAC}, symmetric algebras of dihedral, semidihedral and quaternion type \cite{EJR} and symmetric algebras of tubular type with nonsingular Cartan matrix \cite{AHMW}.
A big problem is to classify all tilting-discrete symmetric algebras.

To reach the goal, $\tau$-tilting finiteness is a must-check property, because it is a necessary condition for a symmetric algebera to be tilting-discrete \cite{AIR, DIJ, DIRRT}; 
in particular, the endomorphism algebras of tilting complexes over a given symmetric algebra are all $\tau$-tilting finite if and only if the symmetric algebra is tilting-disctete \cite{Ai}.
Note also that the notion of support $\tau$-tilting modules coincides with that of 2-term tilting complexes for a symmetric algebra.
Then, we are curious about the following folklore conjecture:

\begin{conjecture*}
Any $\tau$-tilting finite symmetric algebra is tilting-discrete.
\end{conjecture*}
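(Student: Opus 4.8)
The plan is to run everything through the characterisation quoted above: a symmetric algebra $A$ is tilting-discrete if and only if the endomorphism algebra $\End(T)$ of every tilting complex $T$ over $A$ is $\tau$-tilting finite \cite{Ai}. Because the class of symmetric algebras is closed under derived equivalence, each $\End(T)$ is again symmetric, and by Rickard's theorem every symmetric algebra in the derived equivalence class of $A$ arises in this way. Hence the conjecture is \emph{equivalent} to the statement that $\tau$-tilting finiteness is a derived invariant within the class of symmetric algebras: if $A$ is $\tau$-tilting finite and $B$ is a symmetric algebra with $\Db(B)\simeq\Db(A)$, then $B$ is $\tau$-tilting finite. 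So the first move is to discard tilting complexes of higher length from the statement and reformulate the whole problem as this invariance.

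To prove the invariance I would work entirely with silting mutation, which is legitimate since silting and tilting complexes coincide over a symmetric algebra. Two symmetric algebras in one derived equivalence class are joined by a tilting complex, and I would try to generate that link by a chain of irreducible silting mutations; this presupposes connectedness of the silting mutation graph, which I would establish first (it is known for tilting-discrete symmetric algebras after Aihara--Iyama, and must be set up here as well). The invariance would then reduce to a single local assertion: one irreducible silting mutation of a $\tau$-tilting finite symmetric algebra is again $\tau$-tilting finite. To attack this I would use the silting-reduction technique of Iyama--Yang and Aihara--Iyama, which realises the silting complexes lying in a fixed mutation interval as the silting complexes of a smaller algebra, together with the finite lattice structure of $\sttilt A$ for $\tau$-tilting finite $A$ \cite{DIRRT}; as a parallel handle I would invoke the equivalence ``$\tau$-tilting finite $\Leftrightarrow$ brick-finite'' \cite{DIJ} and try to transport brick-finiteness along the equivalence through the intermediate HRS-tilted hearts.

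The hard part --- the step on which everything rests --- is exactly this local assertion. A $2$-term tilting complex over $B$ pulls back under the equivalence to a tilting complex over $A$ of \emph{a priori unbounded length}, while the hypothesis that $A$ is $\tau$-tilting finite bounds only its $2$-term (length-one) tilting complexes; nothing in that hypothesis controls the supply of longer tilting complexes, nor need the silting-reduction subalgebra produced at each stage inherit $\tau$-tilting finiteness from $A$. The brick-theoretic route meets the mirror image of this difficulty: a brick in the transported heart may have cohomology spread across several degrees of the standard $t$-structure on $B$, so brick-finiteness of that heart does not obviously descend to $\mod B$. In every previously settled case --- Brauer graph algebras with restricted cycles \cite{AAC}, dihedral, semidihedral and quaternion type \cite{EJR}, tubular type \cite{AHMW} --- this obstruction is bypassed because the relevant family of algebras is \emph{closed} under the reduction and each member is independently shown to be $\tau$-tilting finite; for an arbitrary $\tau$-tilting finite symmetric algebra no such ambient family is at hand, and overcoming this single mutation step is, I expect, the entire content of the conjecture.
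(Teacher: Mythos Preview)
Your reformulation is correct: the conjecture is equivalent to the assertion that $\tau$-tilting finiteness is a derived invariant among symmetric algebras. However, the paper does not prove the conjecture --- it \emph{disproves} it. The statement you are trying to establish is false, and the paper exhibits an explicit counterexample (Example~\ref{Am}).

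Take $\ell=1$, $m\geq 2$, and let $A_m$ be the algebra given by the quiver $\xymatrix{1 \ar@(lu,ld)_x\ar[r]^y & 2}$ with relations $x^m=0=xy$. Its trivial extension $\Lambda$ is symmetric and $\tau$-tilting finite: the loops $x$ and $a$ in the quiver of $\Lambda$ are central, so by \cite{EJR} the support $\tau$-tilting quiver of $\Lambda$ coincides with that of $\Lambda/(x,a)$, which is representation-finite. Now perform the irreducible left mutation of $A_m$ at $P_2$; its endomorphism algebra $B_m$ is derived equivalent to $A_m$, hence the trivial extension $\Gamma$ of $B_m$ is a symmetric algebra derived equivalent to $\Lambda$. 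One checks that the Gabriel quiver of $\Gamma$ has a multiple arrow, so $\Gamma$ is $\tau$-tilting infinite. Thus $\Lambda$ is $\tau$-tilting finite but admits a derived equivalent symmetric algebra that is not, and therefore $\Lambda$ is not tilting-discrete.

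You correctly isolated the crux: the ``local assertion'' that one irreducible tilting mutation of a $\tau$-tilting finite symmetric algebra remains $\tau$-tilting finite. This is exactly the step that fails in the example above, and your own caveat --- that a $2$-term tilting complex over the mutated algebra pulls back to a tilting complex of uncontrolled length over $\Lambda$, so $\tau$-tilting finiteness of $\Lambda$ gives no grip on it --- is not a technical obstacle to be overcome but the actual mechanism of failure. There is also a circularity earlier in your outline: you propose to connect two derived-equivalent symmetric algebras by a chain of irreducible mutations, but connectedness of the tilting quiver is a \emph{consequence} of tilting-discreteness, not something available in advance; indeed the paper also produces symmetric algebras that are not even tilting-connected.
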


First, we give an easy observation to judge the truth of the conjecture; a \emph{3-term} complex is defined to be a complex whose $i$-th term is zero unless $i=0,-1,-2$.

\begin{proposition*}
\begin{enumerate}
\item If every $\tau$-tilting finite symmetric algebra $\Lambda$ admits only finitely many 3-term (basic) tilting complexes, then the conjecture holds true.
\item The following are equivalent:
\begin{enumerate}
\item Any $\tau$-tilting finite symmetric algebra is tilting-discrete;
\item For every (not necessarily, symmetric) algebra $A$, the trivial extension of $A$ is tilting-discrete if and only if it is $\tau$-tilting finite;
\end{enumerate}
\end{enumerate}
\end{proposition*}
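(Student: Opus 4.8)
The plan for (1) is to combine the reduction of tilting-discreteness to a $\tau$-tilting finiteness condition along iterated mutation from \cite{Ai} with a degree count comparing $2$-term and $3$-term tilting complexes. Recall from \cite{Ai} that a symmetric algebra is tilting-discrete precisely when the endomorphism algebra of every tilting complex reached from it by iterated irreducible (left) mutation is $\tau$-tilting finite. The key lemma I would prove is: \emph{if a symmetric algebra $\Gamma$ has only finitely many $3$-term basic tilting complexes, then $\End_{\Db(\Gamma)}(U)$ is $\tau$-tilting finite for every $2$-term tilting complex $U$ over $\Gamma$}. To see this, let $G\colon\Db(\End U)\xrightarrow{\sim}\Db(\Gamma)$ be the derived equivalence sending $\End U$ to $U$; a $2$-term tilting complex $V$ over $\End U$ fits into a triangle $P^{0}\to V\to P^{-1}[1]\to P^{0}[1]$ with $P^{0},P^{-1}\in\add(\End U)$ (placed in degrees $0$ and $-1$), so $G(V)$ fits into a triangle with terms in $\add U$ and $\add U[1]$; since $U$ is concentrated in degrees $-1,0$, the complex $G(V)$ is concentrated in degrees $-2,-1,0$ and hence is a $3$-term tilting complex over $\Gamma$. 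As $G$ is an equivalence, $V\mapsto G(V)$ is injective on isomorphism classes, so $\End U$ has at most as many $2$-term tilting complexes as $\Gamma$ has $3$-term tilting complexes, and since $\End U$ is symmetric this means $\End U$ is $\tau$-tilting finite. Granting the hypothesis of (1) and starting from a $\tau$-tilting finite symmetric algebra $\Lambda$, I would then induct on the number of mutation steps: the base case is $\Lambda$ itself; and if $T=\mu^{+}(T')$ with $\End(T')$ symmetric and, inductively, $\tau$-tilting finite, then $\End(T)$ is the endomorphism algebra of a $2$-term tilting complex over $\End(T')$, the hypothesis furnishes finitely many $3$-term tilting complexes over $\End(T')$, and the key lemma makes $\End(T)$ $\tau$-tilting finite. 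By the recalled criterion $\Lambda$ is tilting-discrete, so the conjecture holds. The step needing the most care is the bookkeeping in the key lemma — which window of three consecutive degrees plays the role of ``$3$-term'' for left versus right mutation (immaterial up to shift, since the count of tilting complexes in any such window is the same), and matching the precise left-mutation formulation of the criterion of \cite{Ai}.

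For (2), the implication (a)$\Rightarrow$(b) is immediate: a trivial extension $T(A)=A\ltimes DA$ is a symmetric algebra, so if it is $\tau$-tilting finite then (a) makes it tilting-discrete, while the reverse implication in (b) is the general fact that a tilting-discrete symmetric algebra is $\tau$-tilting finite \cite{AIR,DIJ,DIRRT}. The substance is (b)$\Rightarrow$(a): given a $\tau$-tilting finite symmetric algebra $\Lambda$, I must derive tilting-discreteness using only the trivial-extension case. Since tilting-discreteness and $\tau$-tilting finiteness both respect block decomposition, and local symmetric algebras are tilting-discrete for the trivial reason that their only tilting complexes are shifts of the algebra, I would first reduce to $\Lambda$ connected and non-semisimple.

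The plan for connected non-semisimple $\Lambda$ is to pass through the repetitive-algebra formalism: realise $\Lambda$ as an orbit algebra $\widehat{B}/\langle g\rangle$ of the repetitive algebra of a suitable algebra $B$, observe that the trivial extension $T(B)=\widehat{B}/\langle\nu_{B}\rangle$ is the orbit algebra by the Nakayama shift, and note that $\langle\nu_{B}\rangle$ has finite index in $\langle g\rangle$, so that there is a finite Galois covering $T(B)\to\Lambda$. One then transports the two finiteness conditions through this covering: $\tau$-tilting finiteness of $\Lambda$ should force $\tau$-tilting finiteness of the cover $T(B)$, whence (b) makes $T(B)$ tilting-discrete, and the push-down functor sends tilting complexes over $T(B)$ to tilting complexes over $\Lambda$ in a way that should force $\Lambda$ to be tilting-discrete. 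I expect this transport to be the main obstacle — pulling $\tau$-tilting finiteness up to the cover and pushing tilting-discreteness back down — and it is presumably cleanest to argue through the (graded) silting theory over $\Lambda$ itself, with the case $\operatorname{char}k$ dividing the order of the covering group being the most delicate point.
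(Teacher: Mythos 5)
For part (1) your argument is correct but takes a genuinely different route from the paper's. The paper invokes \cite[Theorem 2.4]{AM} to reduce tilting-discreteness to the finiteness of $\{U : T\geq U\geq T[1]\}$ for the irreducible tilting mutations $T$ of $\Lambda$, and then observes that $\Lambda\geq T\geq U\geq T[1]\geq\Lambda[2]$, so every such $U$ is a $3$-term tilting complex over $\Lambda$; the hypothesis is applied exactly once, to $\Lambda$ itself. You instead prove the transport lemma ``finitely many $3$-term tilting complexes over $\Gamma$ forces $\tau$-tilting finiteness of $\End(U)$ for every $2$-term tilting complex $U$ over $\Gamma$'' and then induct along iterated irreducible left mutations, invoking the hypothesis at every stage for the mutated endomorphism algebras. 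This is sound (your key lemma is essentially Proposition \ref{2vs3}, and the remark following the main theorem in the introduction records precisely this variant of the argument), but it is longer and needs the hypothesis for the whole derived equivalence class rather than for $\Lambda$ alone; the degree bookkeeping in your lemma is correct.

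Part (2) contains a genuine gap in the direction (b)$\Rightarrow$(a). Your plan hinges on presenting an arbitrary connected non-semisimple symmetric algebra $\Lambda$ as an orbit algebra $\widehat{B}/\langle g\rangle$ of a repetitive algebra with $\langle\nu_B\rangle$ of finite index in $\langle g\rangle$, so as to obtain a finite Galois covering $T(B)\to\Lambda$. You give no construction of such $B$ and $g$, and none exists in general: orbit algebras of repetitive algebras form a restricted class of self-injective algebras (their stable module categories are orbit categories of $\Db(\mod B)$, a strong structural constraint), and there is no reason a general $\tau$-tilting finite symmetric algebra — say a wild block of a group algebra such as the one in Example \ref{C3C3:C2} — should be of this form. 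Even where the covering exists, the two transport steps you flag (pulling $\tau$-tilting finiteness up to the cover and pushing tilting-discreteness back down) are left entirely open, so the proof is incomplete on its own terms. The paper sidesteps coverings altogether with a short trick: since $A$ is symmetric, $DA\simeq A$ as a bimodule, so the trivial extension is $\Lambda\simeq A\otimes_K K[x]/(x^2)$; by \cite[Theorem 2.1]{AH} this $\Lambda$ is $\tau$-tilting finite, hence tilting-discrete by (b), and the functor $-\otimes_K K[x]/(x^2)$ induces an order-preserving injection $\tilt A\hookrightarrow\tilt\Lambda$ carrying each interval $\{T : A\geq T\geq A[d]\}$ into $\{S : \Lambda\geq S\geq\Lambda[d]\}$; finiteness of the latter forces finiteness of the former, so $A$ is tilting-discrete. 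You should replace the covering argument with this tensor-product construction.
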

\begin{proof}
(1) Due to \cite[Theorem 2.4]{AM}, we have only to show that for any irreducible tilting mutation $T$ of $\Lambda$, there exist only finitely many tilting complexes $U$ with $T\geq U\geq T[1]$.
Evidently, we have $\Lambda\geq T\geq U\geq T[1]\geq \Lambda[2]$; hence we are done by the assumption.

(2) We need only check the implication (ii)$\Rightarrow$(i).
Let $A$ be a $\tau$-tilting finite symmetric algebra.
The symmetry of $A$ indicates that the trivial extension $\Lambda$ of $A$ is isomorphic to $A\otimes_KK[x]/(x^2)$.
By \cite[Theorem 2.1 (Corollary 2.2)]{AH} and its proof, we see that $\Lambda$ is $\tau$-tilting finite, which implies by assumption that $\Lambda$ is tilting-discrete.
Again, by \cite[Theorem 2.1]{AH} and its proof the functor $-\otimes_KK[x]/(x^2): \Kb(\proj A)\to\Kb(\proj\Lambda)$ induces an injection $\tilt A\to \tilt\Lambda$ that preserves the partial order.
This completes the proof.
\end{proof}

Thus, we focus on trivial extension algebras to grope for answers of the conjecture.
An advantage to do that is the following.
Let $A$ be a (not necessarily, symmetric) algebra with Cartan matrix $C$.
If the trivial extension of $A$ is $\tau$-tilting finite, then we have:
\begin{enumerate}
\item  \cite[Theorem 5.12(d)]{DIRRT} $A$ is also $\tau$-tilting finite;
\item \cite[Proposition 3.2]{H} The quadratic form $\overline{x^\tr}Cx$ has a strictly positive real part for complex vectors $x$, and equivalently, $C$ is invertible and the (usual) homological quadratic form $x^\tr C^{-\tr}x$ (i.e., Euler form) for real vectors $x$ is positive definite.
Here, $(-)^\tr$ and $\overline{(-)}$ denote the transpose and the complex conjugate of a matrix.
\end{enumerate}
We refer to the condition (2) as a \emph{positive definite Cartan matrix}.
In this case, the algebra $A$ satisfies certain good properties; see Appendix \ref{append:cyclotomic}.
Note that the conditions (1) and (2) are independent (Example \ref{TauTFvsPDC}).
We also remark that even if every algebra with the same trivial extension is $\tau$-tilting finite and has a positive definite Cartan matrix, its trivial extension is not necessarily $\tau$-tilting finite  (Example \ref{withsameTE}).

Then, we find a crucial example of algebras.
Let $\ell, m\geq1$ and $Q_m$ be the quiver 
\[\xymatrix{
1 \ar@(lu,ld)_x\ar@{=>}[r]^y & 2
}\]
Here, the double arrow stands for $\ell$ arrows being drawn from 1 to 2 which are all labelled by $y$; note that these arrows are actually different, but we write them in this manner without confusion.
Then, we set $A_m:=KQ_m/(x^m, xy)$.
One remarks that $A_1$ for any $\ell$ is nothing but the $\ell$-Kronecker algebra.
Here is the first main theorem of this paper, which answers the conjecture negatively and gives a complete list of the trivial extensions of the $\ell$-Kronecker algebras to be tilting-discrete, tilting-connected and $\tau$-tilting finite.

\begin{theorem*}[Example \ref{Am} and Theorem \ref{Kronecker}]
Let $\Lambda$ be the trivial extension of $A_m$.
\begin{enumerate}
\item Let $m>1$.
\begin{enumerate}
\item $\Lambda$ is never tilting-discrete;
\item It is $\tau$-tilting finite if and only if $\ell=1$.
\end{enumerate}
\item Let $m=1$.
\begin{enumerate}
\item If $\ell=1$, then $\Lambda$ is tilting-discrete;
\item Otherwise, it is neither $\tau$-tilting finite nor tilting-connected.
\end{enumerate}
\end{enumerate}
\end{theorem*}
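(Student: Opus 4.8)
The plan is to organise the four assertions around whether $\Lambda=T(A_m)$ is $\tau$-tilting finite, using repeatedly: (i) if $T(A)$ is $\tau$-tilting finite then so is $A$, by \cite[Theorem 5.12(d)]{DIRRT}; (ii) $\tau$-tilting finiteness passes to quotient algebras; (iii) a representation-finite symmetric algebra is tilting-discrete \cite{Ai}; (iv) a tilting-discrete symmetric algebra is $\tau$-tilting finite; and (v) Aihara's characterisation \cite{Ai} that a symmetric algebra is tilting-discrete if and only if the endomorphism algebra of every tilting complex over it is $\tau$-tilting finite. First dispose of the extremes. If $m=\ell=1$ the relation $x^m=x$ kills the loop, so $A_1$ is the path algebra of $1\to 2$, hence representation-finite, and $\Lambda=T(A_1)$ is a representation-finite symmetric algebra, thus tilting-discrete; this is (2)(a). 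For $\ell\ge 2$ and arbitrary $m$, the quotient $A_m/(x)$ is the $\ell$-Kronecker algebra, a representation-infinite hereditary algebra and so $\tau$-tilting infinite; by (ii) $A_m$ is $\tau$-tilting infinite, whence by (i) $\Lambda$ is $\tau$-tilting infinite. This yields the ``only if'' half of (1)(b) and the non-$\tau$-tilting-finiteness in (2)(b); and by (iv), $\Lambda$ is then not tilting-discrete, so (1)(a) holds automatically when $\ell\ge 2$.

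There remains the case $\ell=1$. For $m>1$, $\ell=1$ one must show $\Lambda=T(A_m)$ is $\tau$-tilting finite, completing (1)(b). Here $A_m$ is a representation-finite string algebra, but its trivial extension is not special biserial --- the quiver of $\Lambda$ acquires a second loop at vertex $1$ coming from the relation $x^m$ --- so I would proceed directly: realise $\Lambda$ as a quiver with relations (symmetric, two simples, projectives of dimensions $2m+1$ and $3$, Cartan matrix $\left(\begin{smallmatrix}2m&1\\1&2\end{smallmatrix}\right)$), and then run the support $\tau$-tilting (equivalently, $2$-term tilting) mutation graph from $\Lambda$, checking that it is finite; the crucial input is a bound, in terms of $m$, on the dimensions of the indecomposable $\tau$-rigid $\Lambda$-modules. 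As a consistency check, \cite[Proposition 3.2]{H} applies since the Cartan matrix $\left(\begin{smallmatrix}m&1\\0&1\end{smallmatrix}\right)$ of $A_m$ is positive definite ($1<4m$).

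The decisive case is $m>1$, $\ell=1$: here $\Lambda$ is $\tau$-tilting finite yet, I claim, not tilting-discrete, which is the promised counterexample to the conjecture. By (v) it suffices to exhibit one tilting complex $T$ over $\Lambda$ whose endomorphism algebra $B=\End_\Lambda(T)$ is $\tau$-tilting infinite. Since $\Lambda$ is $\tau$-tilting finite it has only finitely many $2$-term tilting complexes, so this becomes a finite search: compute $\End_\Lambda(T)$ for each $2$-term tilting complex $T$ of $\Lambda$ (each support $\tau$-tilting $\Lambda$-module) and locate a $\tau$-tilting infinite one; I expect $T$ to come from mutating $\Lambda$ at vertex $2$, or a short sequence of such mutations, with $B$ recognisable as $\tau$-tilting infinite via a Kronecker-type quotient. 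Equivalently, exactly as in the proof of the Proposition above, such a $T$ forces the tilting complexes $U$ with $T\ge U\ge T[1]$ --- which satisfy $\Lambda\ge T\ge U\ge T[1]\ge\Lambda[2]$ and are therefore $3$-term over $\Lambda$ --- to be infinite in number, contradicting tilting-discreteness. Pinning down this tilting complex and computing its endomorphism algebra is the main obstacle of the whole argument.

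Lastly, (2)(b) also claims that $\Lambda=T(A_1)$ for $\ell\ge 2$ is not tilting-connected; since $\Lambda$ is here $\tau$-tilting infinite, this is not formal. I would invoke the criteria for tilting-disconnectedness of symmetric algebras developed earlier in the paper, specialised to trivial extensions of non-Dynkin hereditary algebras: the tilting complexes reachable from $\Lambda$ by iterated mutation are, up to shift, essentially those whose endomorphism algebra is again the trivial extension of a hereditary algebra derived equivalent to $A_1$, whereas $\Lambda$ also carries tilting complexes outside this family, and exhibiting one such complex not mutation-equivalent to $\Lambda$ gives the claim (and one of the announced new examples of tilting-disconnected symmetric algebras). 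I expect this to follow the general pattern for trivial extensions of non-Dynkin hereditary algebras, so the real difficulty remains concentrated in the case $\ell=1$, $m>1$.
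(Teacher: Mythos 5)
Your treatment of the easy cases is fine: (2)(a) via representation-finiteness, and the $\ell\geq 2$ cases via the $\ell$-Kronecker quotient $A_m/(x)$ together with \cite[Theorem 5.12(d)]{DIRRT}. But the two statements you yourself flag as ``the main obstacle'' are exactly where the content of the theorem lies, and for both you offer only a search plan, not an argument. For the $\tau$-tilting finiteness of $\Lambda$ when $\ell=1$, $m>1$, you propose to enumerate the support $\tau$-tilting mutation graph with an unspecified dimension bound; the paper instead observes that the two loops $x$ and $a$ of the quiver of $\Lambda$ are \emph{central}, so by \cite[Theorem 11]{EJR} the $\tau$-tilting quiver of $\Lambda$ coincides with that of $\Lambda/(x,a)$, which is representation-finite. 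Without that reduction (or a completed computation) this step is open. For (1)(a) in the case $\ell=1$, $m>1$, your strategy of locating a tilting complex with $\tau$-tilting infinite endomorphism algebra is the right one, but the witness is never produced. The paper's route is to mutate $A_m$ (not $\Lambda$) at $P_2$, obtaining a tilting \emph{module} whose endomorphism algebra $B_m$ is a Nakayama-like algebra on the quiver $1\rightleftarrows 2$; the trivial extension $\Gamma$ of $B_m$ then acquires a multiple arrow, hence is not $\tau$-tilting finite, and $\Gamma$ is derived equivalent to $\Lambda$ because derived equivalences lift to trivial extensions. Identifying $B_m$ and checking the double arrow is the whole point, and it is absent from your write-up.

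The sketch for tilting-disconnectedness in (2)(b) rests on a premise that is false for this algebra. By Peng--Xiao \cite{PX}, every algebra derived equivalent to $\Lambda=T(A_1)$ ($\ell\geq 2$) is Morita equivalent to $\Lambda$ itself; in particular every tilting complex, including the shift $\Lambda[1]$, has endomorphism algebra isomorphic to $\Lambda$, so there is no tilting complex ``outside the family'' of trivial extensions of hereditary algebras derived equivalent to $A_1$, and your proposed criterion detects nothing. The actual obstruction is at the level of the Grothendieck group: one computes the $g$-matrices $\G_T=\left(\begin{smallmatrix} -1 & 0 \\ \ell & 1\end{smallmatrix}\right)$ and $\G_U=\left(\begin{smallmatrix} 1 & \ell \\ 0 & -1\end{smallmatrix}\right)$ of the two irreducible left mutations, uses Lemma \ref{gmatrices} and the rigidity of the Morita class to see that every tilting complex reachable from $\Lambda$ has $g$-matrix in the group generated by these two, and checks that neither $-E$ nor $\left(\begin{smallmatrix} 0 & -1 \\ -1 & 0\end{smallmatrix}\right)$ lies in that group, so $\Lambda[1]$ is unreachable. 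Some argument of this $K_0$-theoretic kind is needed; an endomorphism-algebra invariant cannot work here.
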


We will also discuss the silting-discreteness of $A_m$ (Proposition \ref{AmSID}); note that $A_m$ for $\ell=1$ is always $\tau$-tilting finite (more strongly, it is representation-finite.)

\begin{remark*}
As we solved the conjecture negatively, one sees that (2) of the proposition fails.
On the other hand, we restore (1) of the proposition as follows.
Let us consider a property (P) of symmetric algebras such that 
\begin{enumerate}[(i)]
\item it is a derived invariant;
\item it ($+$ the $\tau$-tilting finiteness) implies the finiteness of 3-term (basic) tilting complexes.
\end{enumerate} 
Then any $\tau$-tilting finite symmetric algebra satisfying the property (P) is tilting-discrete.

We remark by Proposition \ref{2vs3} that the condition (ii) can be replaced by the following:
\begin{enumerate}[(i)]
\setcounter{enumi}{2}
\item the property (P) ($+$ the $\tau$-tilting finiteness) yields that every 2-term tilting complex has $\tau$-tilting finite endomorphism algebra.
\end{enumerate}
For example, the representation-finiteness of symmetric algebras can be used as (P).
\end{remark*}

A next problem is when the $\tau$-tilting finiteness of a symmetric algebra can be lifted to the tilting-discreteness (\emph{the lifting problem}).
A model case is Brauer graph algebras \cite{AAC} (Theorem \ref{sdBGA}).
In this context, we determine gentle algebras whose trivial extensions are tilting-discrete; they are Brauer graph algebras with multiplicity identically one \cite{Sc}.

\begin{theorem*}[Theorem \ref{deriveddiscrete}]
The following are equivalent for a gentle algebra $A$:
\begin{enumerate}
\item The trivial extension of $A$ is tilting-discrete;
\item It is $\tau$-tilting finite;
\item $A$ has a positive definite Cartan matrix;
\item It is a derived-discrete algebra with nonsingular Cartan matrix.
\end{enumerate}
\end{theorem*}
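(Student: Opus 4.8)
The plan is to prove the cycle of implications $(1)\Rightarrow(2)\Rightarrow(3)\Rightarrow(4)\Rightarrow(1)$; the two middle arrows will be quoted from the literature, while the two outer arrows carry the content. Throughout I will write $\Lambda$ for the trivial extension of $A$ and reduce at once to the case where $A$ is connected. The implication $(1)\Rightarrow(2)$ is formal, since $\tau$-tilting finiteness is necessary for the tilting-discreteness of a symmetric algebra (see the discussion after the Conjecture, via \cite{AIR,DIJ,DIRRT}); and $(2)\Rightarrow(3)$ is exactly property (2) recorded before the statement, namely \cite[Proposition 3.2]{H}: if $\Lambda$ is $\tau$-tilting finite, then the Cartan matrix $C$ of $A$ is positive definite.

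For $(3)\Rightarrow(4)$ I would argue by contraposition. A positive definite Cartan matrix is in particular invertible, so only derived-discreteness is at issue. If a gentle algebra $A$ is not derived-discrete, then by Vossieck's characterisation of derived-discrete algebras together with the description of gentle algebras by their derived representation type, $A$ is either a gentle one-cycle algebra satisfying the clock condition --- hence derived equivalent to a hereditary or tubular algebra of Euclidean type, whose Euler form is positive semidefinite but not definite --- or has a quiver with at least two independent cycles, in which case $A$ is of wild derived type and $C$ is singular (or $x^{\tr}C^{-\tr}x$ is indefinite). Either way $(3)$ fails, as desired.

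For $(4)\Rightarrow(1)$, the starting point is \cite{Sc}: since $A$ is gentle, $\Lambda$ is a Brauer graph algebra with multiplicity function identically $1$; write $G$ for its Brauer graph. As $A$ is derived-discrete it is derived equivalent either to a path algebra of Dynkin type $A_n$ or to a gentle one-cycle algebra not satisfying the clock condition, and since passing to the trivial extension respects derived equivalence of gentle algebras, I may argue with the relevant normal form. When $A$ is of type $A_n$, the graph $G$ is a tree, so $\Lambda$ is a Brauer tree algebra, in particular representation-finite, and hence tilting-discrete by \cite{Ai} (or: Theorem \ref{sdBGA} applies, $G$ having no cycle). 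When $A$ is a gentle one-cycle algebra (not satisfying the clock condition), $G$ has exactly one cycle, and the hypothesis $\det C\neq 0$ will be equivalent to that cycle being of odd length (an even cycle forces $\det C=0$). In both cases $G$ has at most one cycle of odd length and no cycle of even length, so $\Lambda$ is tilting-discrete by \cite{AAC}, i.e.\ Theorem \ref{sdBGA}. This closes the cycle.

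The routine parts will be the two quoted implications and the appeals to \cite{Ai}, \cite{Sc}, \cite{AAC}. The hard part is the dictionary between the derived classification of gentle algebras and the combinatorics of $G$. Concretely, the two pieces of genuine work are: in $(3)\Rightarrow(4)$, verifying that \emph{every} gentle algebra that is not derived-discrete fails positive-definiteness of its Euler form (the Euclidean case and the wild/two-cycle case); and in $(4)\Rightarrow(1)$, pinning down for a derived-discrete gentle one-cycle algebra the equivalence between ``$\det C\neq 0$'' and ``the unique cycle of $G$ has odd length''. I expect the latter to be the main obstacle; the cleanest route seems to be to reduce to the explicit normal forms of derived-discrete algebras and read off both the Cartan matrix and the Brauer graph $G$ directly there.
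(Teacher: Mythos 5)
Your overall architecture (a cycle of implications, with the substance concentrated in ``positive definite Cartan $\Rightarrow$ derived-discrete'' and ``derived-discrete $\Rightarrow$ tilting-discrete trivial extension'') matches the paper, and your plan for $(4)\Rightarrow(1)$ --- reduce to the explicit normal forms of derived-discrete gentle algebras, read off the Brauer graph of the trivial extension, and check that $\det C\neq 0$ forces the unique cycle to be odd --- is exactly what the paper does with the Bobinski--Geiss--Skowronski normal forms and Theorem \ref{sdBGA}. You correctly flag that computation as the remaining work, and it is routine once the normal form is fixed.

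The genuine gap is in your $(3)\Rightarrow(4)$. Your contrapositive case analysis ends with: if the quiver of $A$ has at least two independent cycles, then ``$A$ is of wild derived type and $C$ is singular (or the Euler form is indefinite).'' The first claim is false --- gentle algebras are always derived tame, regardless of the number of cycles --- and the second claim is precisely the assertion that needs proof; you give no argument for it, and it is not a routine verification. The paper obtains the bound on the number of cycles \emph{indirectly}: positive definiteness of $C$ gives positive definiteness of $C+C^{\tr}$, the Cartan matrix of the trivial extension $\Lambda$; Theorem \ref{sdBGA} then constrains the Brauer graph of $\Lambda$ (at most one cycle, necessarily odd); by \cite{BS} this makes $\Lambda$ domestic, and \cite{N} then forces the Gabriel quiver of $A$ to have at most one cycle. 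Without some such detour (or an independent proof that two or more cycles in a gentle quiver kill positive definiteness), your case (d) is unsupported. A smaller point: your dichotomy for non-derived-discrete gentle algebras silently omits tree-quiver gentle algebras; this happens to be harmless (gentle tree algebras are iterated tilted of type $\AA$, hence derived-discrete), but it should be said, and the paper does address the simply-connected case explicitly via \cite{ANS, AS2}.
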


We also classify tilting-discrete trivial extensions of clucter-tilted algebras.

\begin{theorem*}[Theorem \ref{clustertilted}]
Let $A$ be a nonhereditary cluster-tilted algebra of type $Q$. Then the following conditions are equivalent:
\begin{enumerate}
\item The trivial extension of $A$ is tilting-discrete;
\item It is $\tau$-tilting finite;
\item $Q$ is of Dynkin type $\AA$ and $A$ has precisely one oriented 3-cycle.
\end{enumerate}
\end{theorem*}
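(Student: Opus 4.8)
The implication (1)$\Rightarrow$(2) is immediate, since every tilting-discrete symmetric algebra is $\tau$-tilting finite (\cite{AIR, DIJ, DIRRT}; see the introduction). For the remaining implications I would split according to the Dynkin type of $Q$, treating type $\AA$ by Brauer graph combinatorics and types $\D$, $\EE$ through the Cartan matrix.

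Assume first that $Q$ is of type $\AA$ (hence $n\ge 3$, as $A$ is nonhereditary). Then $A$ is gentle --- a ``tree of triangles'' --- so by \cite{Sc} its trivial extension $\Lambda=T(A)$ is a Brauer graph algebra of multiplicity identically one, and by \cite{AAC} (Theorem \ref{sdBGA}) such an algebra is tilting-discrete if and only if it is $\tau$-tilting finite, if and only if its Brauer graph has at most one cycle of odd length and no cycle of even length. It remains to read the cycle structure of this Brauer graph off from $A$: because $A$ is a tree of triangles, the Brauer graph of $\Lambda$ is obtained from a disjoint union of line graphs by inserting one triangle for each oriented $3$-cycle of $Q$, so its cycle rank equals the number of oriented $3$-cycles of $Q$ and all of its cycles arise from these triangles. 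Thus if $Q$ has exactly one oriented $3$-cycle, the Brauer graph has a single cycle, of length $3$, and $\Lambda$ is tilting-discrete; if $Q$ has at least two, the cycle rank is $\ge 2$, so the Brauer graph has two distinct cycles and hence either two odd cycles or an even one, whence $\Lambda$ is not $\tau$-tilting finite. This yields (1)$\Leftrightarrow$(2)$\Leftrightarrow$(3) in type $\AA$.

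Now assume $Q$ is of type $\D$ or $\EE$. Then (3) fails outright, so it suffices to show that $T(A)$ is never $\tau$-tilting finite, which also rules out (1). By \cite[Proposition 3.2]{H}, if $T(A)$ is $\tau$-tilting finite then $C_A$ is positive definite, that is, $C_A+C_A^{\tr}$ is positive definite; in particular the symmetrization of every principal submatrix of $C_A$ is positive definite. Since $A$ is cluster-tilted of Dynkin type, every oriented cycle of its quiver has length three and is annihilated by the Jacobian relations, so $C_A$ has all diagonal entries equal to $1$. Hence, as soon as two vertices $i\ne j$ admit nonzero paths in both directions, the associated $2\times 2$ principal submatrix $\left(\begin{smallmatrix}1&\ast\\ \ast&1\end{smallmatrix}\right)$ of $C_A$ has symmetrization with nonpositive determinant, so $C_A$ is not positive definite. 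Such a pair $i,j$ occurs whenever two oriented $3$-cycles of $Q$ share an arrow $\alpha\colon i\to j$: then the Jacobian relation $\partial_\alpha W$ is a sum of two length-two paths from $j$ to $i$, so one of them is nonzero, while $\alpha$ is a nonzero path from $i$ to $j$.

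The remaining --- and main --- obstacle is the case where $Q$ is of type $\D_n$ or $\EE_n$ and no two $3$-cycles share an arrow. Here one must invoke the classification of the quivers (with potential) of cluster-tilted algebras of these types: the trivalent vertex of the Dynkin diagram forces a ``fork'' configuration --- for instance two sources each mapping by a single arrow into a common vertex that lies on a $3$-cycle --- and for each family occurring in the classification one exhibits a small principal submatrix of $C_A$, now necessarily larger than $2\times 2$, that is not positive definite. For type $\EE$ this is a finite verification over the finitely many cluster-tilted algebras of types $\EE_6,\EE_7,\EE_8$; for type $\D_n$ it requires the uniform structural analysis just indicated, and carrying it out for all $n$ is the technical core of the proof.
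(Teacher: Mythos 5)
Your overall architecture matches the paper's: reduce to Dynkin type, settle type $\AA$ through gentleness and the Brauer graph criterion (Theorems \ref{sdBGA} and \ref{deriveddiscrete}), and kill types $\D$ and $\EE$ by showing the Cartan matrix is not positive definite, so that \cite[Proposition 3.2]{H} rules out $\tau$-tilting finiteness of the trivial extension. The type $\AA$ half is essentially complete and is the same argument as the paper's (which simply cites \cite{ABCJP} and Theorem \ref{deriveddiscrete}); your explicit identification of the Brauer graph as a tree of lines with one triangle per oriented $3$-cycle is a correct elaboration. You also omit the preliminary step that $Q$ must be Dynkin at all: the theorem allows any acyclic $Q$, and one first needs that $\tau$-tilting finiteness of the trivial extension descends to $A$ (\cite[Theorem 5.12(d)]{DIRRT}) and then forces $Q$ Dynkin via \cite{Z}. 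That is a small, easily repaired omission.

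The genuine gap is the one you yourself flag: for types $\D$ and $\EE$ you only dispose of the configuration in which two oriented $3$-cycles share an arrow, and you defer ``the technical core'' --- all remaining nonhereditary cluster-tilted algebras of types $\D_n$ and $\EE_n$ --- to an unexecuted case analysis. This deferred case is not peripheral: it contains the most basic examples, e.g.\ a cluster-tilted algebra of type $\D_4$ with a single oriented $3$-cycle, where your $2\times 2$ principal-submatrix trick cannot apply because the Cartan matrix there need not have symmetric support. So as written the proposal does not prove $(2)\Rightarrow(3)$. The paper closes exactly this hole by an inductive device you do not have: it passes to quotients $A/AeA$ at a primitive idempotent $e$, which remain cluster-tilted \cite{BMR2} and $\tau$-tilting finite, and observes that when the corresponding simple occurs only in the socle of $A/P$ (e.g.\ $e$ a sink, a source, or a vertex of a terminal $3$-cycle) positive definiteness of the Cartan form is inherited by the quotient; iterating this strips the quiver down to configurations covered by the derived equivalence classifications of Bastian--Holm--Ladkani \cite{BHL, BHL2}, whose Cartan matrices one computes directly and finds non-positive definite. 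Without this reduction (or an equivalent uniform argument for all $\D_n$), your proof of the implication $(2)\Rightarrow(3)$ outside type $\AA$ is incomplete.
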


For more examples, we consider a variation of $A_m$ above, but their trivial extensions behave quite differently.
Let $A_m^\circ$ be the algebra presented by the quiver $Q_m$ with relation $x^m=0$.
Then we have the following result.

\begin{theorem*}[Proposition \ref{TEofAm2}]
Let $\Lambda$ be the trivial extension of $A_m^\circ$ and assume $\ell=1$.
Then the following conditions are equivalent:
\begin{enumerate}
\item $\Lambda$ is tilting-discrete;
\item It is $\tau$-tilting finite;
\item It has a positive definite Cartan matrix;
\item It is tilting-connected;
\item $m\leq3$.
\end{enumerate}
\end{theorem*}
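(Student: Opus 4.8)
The plan is to establish the cycle $(5)\Rightarrow(1)\Rightarrow(2)\Rightarrow(3)\Rightarrow(5)$ together with $(1)\Rightarrow(4)$ and $(4)\Rightarrow(5)$; all but two of these arrows are formal. Write $T(B)$ for the trivial extension of an algebra $B$, so that $\Lambda=T(A_m^\circ)$ (we keep $\ell=1$ throughout), and write $K_\ell$ for the $\ell$-Kronecker algebra. A count of paths gives $C_{A_m^\circ}=\left(\begin{smallmatrix} m & m\\ 0 & 1\end{smallmatrix}\right)$, hence $C_\Lambda=C_{A_m^\circ}+C_{A_m^\circ}^{\tr}=\left(\begin{smallmatrix} 2m & m\\ m & 2\end{smallmatrix}\right)$; by Sylvester's criterion the symmetric matrix $C_\Lambda$ is positive definite exactly when $\det C_\Lambda=m(4-m)>0$, i.e. when $1\le m\le 3$, which gives $(3)\Leftrightarrow(5)$---and, since the symmetrisation of $C_{A_m^\circ}$ is $\tfrac12 C_\Lambda$, this is also equivalent to $A_m^\circ$ having a positive definite Cartan matrix in the sense of the introduction. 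Now $(1)\Rightarrow(2)$ is \cite{AIR, DIJ}, $(1)\Rightarrow(4)$ is \cite{Ai}, and $(2)\Rightarrow(3)$ is precisely the consequence of \cite[Proposition 3.2]{H} quoted in the introduction, via the remark just made. Thus everything reduces to $(5)\Rightarrow(1)$ and $(4)\Rightarrow(5)$.

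Both I would deduce from a single derived-equivalence statement, handling the small cases first. If $m=1$, then $A_1^\circ\cong KA_2$ is hereditary and $\Lambda$ is the representation-finite symmetric Nakayama algebra with two simples and Loewy length $3$, hence tilting-discrete by \cite{Ai}. If $m=2$, then $A_2^\circ$ is gentle (the sole relation $x^2$ is quadratic and the remaining gentle axioms are routine) with positive definite Cartan matrix, so Theorem \ref{deriveddiscrete} applies directly. For $m\ge 3$ I expect $\Lambda=T(A_m^\circ)$ to be derived equivalent to $T(K_{m-2})$; this is strongly suggested by the identity $\det C_{T(A_m^\circ)}=4m-m^2=4-(m-2)^2=\det C_{T(K_{m-2})}$ and, more precisely, by the $\Z$-congruence $C_{T(A_m^\circ)}\sim C_{T(K_{m-2})}$ of the two Cartan forms, a necessary condition for a derived equivalence of symmetric algebras. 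Granting it, and using that tilting-discreteness and tilting-connectedness are derived invariants of symmetric algebras: for $m=3$ one has $K_1=KA_2$, so $\Lambda$ is derived equivalent to a representation-finite symmetric algebra and therefore tilting-discrete, which together with the cases $m=1,2$ yields $(5)\Rightarrow(1)$; and for $m\ge 4$ one has $m-2\ge 2$, so $T(K_{m-2})$---and hence $\Lambda$---is not tilting-connected by Theorem \ref{Kronecker}, which is the contrapositive of $(4)\Rightarrow(5)$.

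The crux, and essentially the only representation-theoretic input, is the derived equivalence $T(A_m^\circ)\simeq T(K_{m-2})$ for $m\ge 3$. My plan is to exhibit an explicit tilting complex over $\Lambda$---built so as to collapse the length-$m$ filtration produced by the loop $x$ at vertex $1$ (note that $A_m^\circ$ has infinite global dimension, so this is genuinely an equivalence of trivial extensions, not one lifted from the base algebras)---and then to compute its endomorphism algebra and match its quiver and relations with those of $T(K_{m-2})$; the $\Hom$-computations in $\Kb(\proj\Lambda)$ are where the difficulty lies. Should that explicit complex prove awkward for $m=3$, one can fall back on a finite check: since $\Lambda$ has only two simple modules, the tilting complexes obtainable from $\Lambda$ by iterated irreducible mutation, together with their endomorphism algebras, can be listed in finitely many steps, and $(1)$ follows once each such algebra is verified to be $\tau$-tilting finite, by the mutation criterion of \cite{Ai}.
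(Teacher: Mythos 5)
Your reduction of the statement to $(5)\Rightarrow(1)$ and $(4)\Rightarrow(5)$ is correct, as are the Cartan matrix computation giving $(3)\Leftrightarrow(5)$ and the cases $m=1,2$ of $(5)\Rightarrow(1)$. However, the step you yourself identify as the crux --- a derived equivalence between $\Lambda$ (the trivial extension of $A_m^\circ$) and the trivial extension of the $(m-2)$-Kronecker algebra for $m\geq3$ --- is not merely left unproved: it is false, and it contradicts the very theorem you invoke in the next sentence. Theorem \ref{Kronecker} (via \cite{PX}) asserts that the Morita and derived equivalence classes of the trivial extension of the $\ell$-Kronecker algebra coincide; since all algebras here are basic, any algebra derived equivalent to it is isomorphic to it. But $\dim_K\Lambda=2(2m+1)=4m+2$, whereas the trivial extension of the $(m-2)$-Kronecker algebra has dimension $2m$, so the two are not isomorphic and hence not derived equivalent. (For $m=3$ this is also visible directly: the trivial extension of $KA_2$ is representation-finite of dimension $6$, while $\Lambda$ has dimension $14$; and the proposition itself records that for $m\leq3$ the derived equivalence class of $\Lambda$ is a singleton --- consisting of $\Lambda$, not of a Kronecker trivial extension.) The coincidence of determinants, and even the $\Z$-congruence of the Cartan forms, is only a necessary condition and carries no weight here. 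As a result, both $(5)\Rightarrow(1)$ for $m=3$ and $(4)\Rightarrow(5)$ for $m\geq4$ remain unproved.

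The paper closes these two implications by working directly on $\Lambda$. Since $\Lambda$ is also the trivial extension of $(A_m^\circ)^\op$, the irreducible left mutations of $\Lambda$ at $P_1$ and at $P_2$ are induced, via $-\otimes_{A_m^\circ}\Lambda$, from the tilting modules over $(A_m^\circ)^\op$ and $A_m^\circ$ computed in Proposition \ref{Am2}, and both have endomorphism algebra isomorphic to $\Lambda$ itself. Hence \emph{every} iterated irreducible tilting mutation of $\Lambda$ has endomorphism algebra $\Lambda$. Once one checks by hand that $\Lambda$ is $\tau$-tilting finite for $m\leq3$, tilting-discreteness follows from the mutation criterion \cite[Theorem 2.4]{AM}; this is essentially your ``fallback,'' but note that the finiteness of that check is precisely this self-reproducing property of the mutations and does not hold a priori (a tilting-discrete algebra still has infinitely many tilting complexes). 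For $m\geq4$, the same self-reproduction combined with Lemma \ref{gmatrices} shows that the $g$-matrix of any tilting complex in $\C_\Lambda$ lies in the group generated by $\left(\begin{smallmatrix}1&1\\0&-1\end{smallmatrix}\right)$ and $\left(\begin{smallmatrix}-1&0\\m&1\end{smallmatrix}\right)$, which contains neither $-E$ nor the coordinate swap; hence $\Lambda[1]\notin\C_\Lambda$ and $\Lambda$ is not tilting-connected. You would need to supply an argument of this kind to repair the proposal.
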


We will establish the silting-discreteness and the $\tau$-tilting finiteness of $A_m^\circ$ for $\ell=1$ as well (Proposition \ref{Am2}).

For the lifting problem, we saw that Cartan matrices play an important role.
Moreover, it is significant to study derived equivalent algebras to a given symmetric algebra for tilting-discreteness.
Thus, we have an eye on certain values; let $X$ be an indecomposable partial tilting complex (i.e., a direct summand of some tilting complex) and $\delta(X)$ denote the dimension of the endomorphism algebra of $X$ (in the perfect derived category).
In other words, this value coincides with a diagonal entry of the Cartan matrix of a derived equivalent algebra to a given (symmetric) algebra.
Improving a result of Eisele--Janssens--Raedschelders slightly \cite[Theorem 13]{EJR} (Theorem \ref{upperbound}), we have the following result.

\begin{theorem*}[Corollary \ref{upperbound2}]
Let $\Lambda$ be a symmetric algebra and assume that there is an upper bound for the valules $\delta(X)$ of indecomposable partial tilting complexes $X$.
If $\Lambda$ is $\tau$-tilting finite, then it is tilting-discrete.
\end{theorem*}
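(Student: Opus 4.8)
The plan is to obtain the corollary as a short consequence of Theorem~\ref{upperbound}, by feeding the boundedness hypothesis into the derived-invariance-plus-finiteness strategy isolated in the remark above (the one singling out a property $(\mathrm P)$ of symmetric algebras that is a derived invariant and, together with $\tau$-tilting finiteness, forces the finiteness of $3$-term basic tilting complexes). Fix the symmetric algebra $\Lambda$ and a uniform bound $d$ with $\delta(X)\le d$ for all indecomposable partial tilting complexes $X$, and let $(\mathrm P_d)$ be the property of a symmetric algebra $\Gamma$ that $\delta(Y)\le d$ for every indecomposable partial tilting complex $Y$ over $\Gamma$. The first step is to note that $(\mathrm P_d)$ is a derived invariant: a triangle equivalence $\Kb(\proj\Gamma)\simeq\Kb(\proj\Gamma')$ is $K$-linear and matches indecomposable partial tilting complexes, hence preserves every value $\delta(Y)=\dim_K\End_{\Kb(\proj\Gamma)}(Y)$, so the set of these values — in particular its being $\le d$ — depends only on the derived category. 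Consequently, once $\Lambda$ has $(\mathrm P_d)$, so does every symmetric algebra derived equivalent to it, for instance every $\End T$ with $T$ a tilting complex over $\Lambda$.

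Next I would read off from Theorem~\ref{upperbound} the statement that a $\tau$-tilting finite symmetric algebra with property $(\mathrm P_d)$ has only finitely many $3$-term basic tilting complexes — equivalently, by Proposition~\ref{2vs3}, that every $2$-term tilting complex over it has $\tau$-tilting finite endomorphism algebra. (Whatever the exact packaging of Theorem~\ref{upperbound} — a bound on the amplitude of tilting complexes, or this finiteness directly — this is the consequence I need, and it is precisely what the bound on the $\delta(X)$ buys.) With this in hand, $(\mathrm P_d)$ satisfies condition (i) together with (ii) (equivalently (iii)) of the remark above, so its conclusion applies verbatim: $\Lambda$, being $\tau$-tilting finite and satisfying $(\mathrm P_d)$, is tilting-discrete. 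It is worth spelling out why this works. By \cite[Theorem~2.4]{AM} it suffices to show that for every tilting complex $T$ reachable from $\Lambda$ by iterated irreducible left mutation there are only finitely many tilting complexes $U$ with $T\ge U\ge T[1]$, and these $U$ are exactly the $2$-term tilting complexes over $\End T$ (recall that over a symmetric algebra silting complexes are tilting complexes). Arguing by induction on the mutation length, with base case $\End\Lambda=\Lambda$ which is $\tau$-tilting finite by hypothesis: if $\End T$ is $\tau$-tilting finite, then it still has $(\mathrm P_d)$, each irreducible left mutation $\mu_i^-(T)$ is a $2$-term tilting complex over $\End T$, and the displayed consequence of Theorem~\ref{upperbound} applied to $\End T$ makes $\End(\mu_i^-(T))=\End_{\End T}(\mu_i^-(T))$ $\tau$-tilting finite as well; hence the set of $U$ with $T\ge U\ge T[1]$ is finite. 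Via \cite{Ai} this also yields that $\Lambda$ is tilting-connected.

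The hard part is entirely internal to Theorem~\ref{upperbound}: converting a numerical bound on the self-Hom dimensions $\delta(X)$ into the finiteness of $3$-term tilting complexes, i.e.\ into the $\tau$-tilting finiteness of the endomorphism algebra of an arbitrary $2$-term tilting complex. Heuristically the bound on $\delta$ caps the amplitude of tilting complexes and thereby confines the $3$-term ones to finitely many configurations; the subtle point is that this confinement, unlike bare $\tau$-tilting finiteness, cannot be circumvented — the trivial extension of $A_m$ for $m>1$ and $\ell=1$ is $\tau$-tilting finite but not tilting-discrete, so some extra input beyond $\tau$-tilting finiteness is genuinely needed at exactly this step, and the bound on $\delta$ is what Theorem~\ref{upperbound} exploits. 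On the bookkeeping side I would be careful about two things: that the bound $d$ really passes to every intermediate endomorphism algebra (handled above by derived invariance), and the precise shape of \cite[Theorem~2.4]{AM} that permits checking the mutation condition only along the mutation tree of $\Lambda$ rather than for all tilting complexes simultaneously.
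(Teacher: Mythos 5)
Your overall architecture (derived invariance of the bound $d$, then an induction along the mutation tree via \cite[Theorem 2.4]{AM} and Proposition \ref{2vs3}) would work, but it has a genuine gap at exactly the step you flag as "the hard part" and then defer to Theorem \ref{upperbound}. Theorem \ref{upperbound} does not say that a bound on $\delta(\II)$ plus $\tau$-tilting finiteness yields the finiteness you need; its hypothesis is that the Cartan matrix is \emph{positive definite}, and the paper explicitly warns that this hypothesis cannot be dropped: the Brauer graph algebra of the digon has $\sup\delta(\II)=2$ yet admits infinitely many indecomposable $2$-term pretilting complexes $X$ with $\delta(X)=2$. So your parenthetical claim that the finiteness of $3$-term tilting complexes "is precisely what the bound on the $\delta(X)$ buys" is false as stated. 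The mechanism inside Theorem \ref{upperbound} is that positive definiteness makes each level set $\RR_z=\{v\mid v^\tr Cv=z\}$ finite, so the bound $\delta(X)\le d$ confines the $g$-vectors of indecomposable $2$-term pretilting complexes (over $\Lambda$ and over every $\End T$) to the finite set $\RR_1\cup\cdots\cup\RR_d$, and \cite[Theorem 6.5]{DIJ} then gives $\tau$-tilting finiteness of each $\End T$. Without positive definiteness the level sets can be infinite and the whole argument collapses.

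The missing bridge is a single citation: by \cite[Proposition 3.2]{H}, a $\tau$-tilting finite symmetric algebra automatically has a positive definite Cartan matrix. That is the entire content of the corollary relative to Theorem \ref{upperbound}, and it is the one ingredient your proposal never supplies. Once it is in place, Theorem \ref{upperbound} applies verbatim and already concludes that $\Lambda$ is tilting-discrete, so the detour through the property-$(\mathrm P)$ remark, Proposition \ref{2vs3} and \cite[Theorem 2.4]{AM} is unnecessary (though not wrong): the paper's proof is exactly the two-line deduction "$\tau$-tilting finite $\Rightarrow$ positive definite Cartan matrix $\Rightarrow$ Theorem \ref{upperbound}".
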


Also, we explore the tilting-disconnectedness of symmetric algebras; for example,

\begin{theorem*}[Theorem \ref{tdcBGA}]
Let $G$ be a Brauer graph which is either of the following:
\begin{enumerate}[{\rm (i)}]
\item it has only one vertex and at least two edges, or;
\item it has precisely two vertices, at least two edges and is bipartite.
\end{enumerate}
Then the Brauer graph algebra of $G$ is not tilting-connected.
\end{theorem*}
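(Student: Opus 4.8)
The plan is to produce, for each Brauer graph algebra $\Lambda$ attached to a graph $G$ as in (i) or (ii), a tilting complex that cannot be obtained from $\Lambda$ by any finite sequence of irreducible tilting mutations. I would first reduce to normalised representatives. Tilting-connectedness is a derived invariant — derived equivalent algebras have isomorphic silting quivers, and an algebra is tilting-connected exactly when its silting quiver is connected — and Kauer's flip of an edge is a derived equivalence that keeps us inside the class of Brauer graph algebras. Using flips I may therefore normalise the rotation at each vertex, so that it suffices to treat, in case (i), the graph with one vertex and $k\ge 2$ loops in a fixed standard cyclic order, and in case (ii), the two-vertex bipartite graph with $k\ge 2$ edges in a fixed standard cyclic order; since flips do not change the multiplicity function nor the number of edges, I would have to ensure the complex $T$ built below works uniformly in $k$ and the multiplicities. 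I would record at the outset that each such $\Lambda$ is $\tau$-tilting infinite: the underlying multigraph contains a cycle of even length — the closed walk $e_1e_2$ at the vertex in (i), resp. the $2$-cycle $e_1e_2$ joining the two vertices in (ii) — so by the classification of $\tau$-tilting-finite Brauer graph algebras $\Lambda$ is not of finite type, hence by Theorem \ref{sdBGA} not tilting-discrete. But disconnectedness is strictly stronger, and is the real content.

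Next I would describe the connected component of $\Lambda$ well enough, together with a tilting complex lying outside it. For the normalised algebras the $2$-term silting complexes (equivalently the support $\tau$-tilting modules) are listed explicitly from the string/band combinatorics, and the flip-mutations realise a large, explicit sub-quiver of the component of $\Lambda$. The candidate unreachable complex $T$ is manufactured from the even $2$-cycle: it is the tilting complex realising the twist autoequivalence of $\Db(\Lambda)$ along the band $b$ supported on the closed walk $e_1e_2$ (equivalently, $T$ is glued from the non-split self-extension data attached to that $2$-cycle), so that on $K_0(\Lambda)$ it acts by a transvection in the direction of the imaginary root $v$ of the $2$-cycle (the null vector of the Euler form). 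I would then compute $\End_{\Kb(\proj\Lambda)}(T)$ and locate $T$ in the silting fan: the chambers reachable from $\Lambda$ sweep out only a proper sub-fan — because $\Lambda$ is $\tau$-tilting infinite there is a ``wall at infinity'' in the direction $v$ — whereas by construction $T$ sits on the far side of that wall. That $\delta(X)$ is unbounded over the indecomposable partial tilting complexes of $\Lambda$, so that Corollary \ref{upperbound2} does not apply, is precisely what leaves room for such a $T$.

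The hard part is to upgrade ``$T$ cannot be reached through the explicit sub-fan'' to ``$T$ cannot be reached by any finite mutation path'': a priori one could leave the $2$-term range, travel through long tilting complexes, and re-enter on the other side. To close this I would exhibit an invariant that is side-preserving along every covering relation of the silting poset of $\Lambda$ — the irreducible mutations being exactly those covers — and deduce that no finite zigzag can carry $\Lambda$ to $T$. Concretely I would use a linear functional on $K_0(\Lambda)$ pairing positively with $v$, check that a single irreducible mutation changes the ambient chamber by crossing just one $g$-hyperplane and hence cannot jump across the wall orthogonal to $v$, and verify that $\Lambda$ and $T$ lie strictly on opposite sides. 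Making ``wall at infinity'' precise and proving that the part of the silting fan reachable from $\Lambda$ really stays on one side — which requires controlling the global shape of the fan, not merely its $2$-term chambers, via the band combinatorics of $\Lambda$ together with an amplitude bound for $\RHom_\Lambda(\Lambda,-)$ along mutations — is the delicate step; once it is in hand the remaining verifications are the explicit small-graph computations, and the general graphs in (i) and (ii) follow by the flip reduction of the first paragraph.
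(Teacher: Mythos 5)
Your proposal does not close the essential gap, and the step you yourself flag as ``delicate'' is exactly where it fails. The whole difficulty is to rule out \emph{every} finite mutation path from $\Lambda$ to the target, and your proposed mechanism --- a linear functional on $K_0(\Kb(\proj\Lambda))$ pairing positively with the null direction $v$, plus the claim that one irreducible mutation ``cannot jump across the wall orthogonal to $v$'' --- is not an invariant of the covering relation. An irreducible tilting mutation replaces one column of the $g$-matrix by an integer combination of the others minus the old one; there is no a priori bound on how far the new chamber moves, and nothing prevents a long zigzag of such steps from ending up on the other side of any fixed hyperplane. Indeed, for these very algebras the $g$-vectors of reachable indecomposable summands are unbounded (compare Examples \ref{C3C3:C2} and \ref{singularandnoupperbound}), and the reachable part of the silting fan of a $\tau$-tilting infinite algebra need not lie in a half-space --- you give no argument that it does here. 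The construction of the unreachable complex $T$ as a band twist and the computation of its endomorphism algebra are likewise only sketched, so the proof is not complete.

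What actually closes the argument --- and is the paper's route --- is a \emph{multiplicative} invariant attached to each mutation step rather than a half-space one. Since a graph $G$ as in (i) or (ii) has no leaves, Lemma \ref{itm} shows that the local $g$-matrix of every irreducible tilting mutation fixes the all-ones row vector, i.e.\ has eigenvalue one; since Kauer moves preserve the hypotheses (i)/(ii) on the Brauer graph \cite{An}, this holds at every step of any mutation path, and Lemma \ref{gmatrices} (multiplicativity of $g$-matrices along composed derived equivalences) then forces every tilting complex in $\C_\Lambda$ to have $g$-matrix with eigenvalue one. The shift $\Lambda[1]$ has $g$-matrix $-E$, which does not, so $\Lambda[1]\notin\C_\Lambda$. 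Note that the paper's target is simply $\Lambda[1]$ --- no auxiliary complex $T$ and no control of the global shape of the silting fan are needed.
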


\begin{notationremark*}
Throughout this paper, all algebras mean a finite dimensional algebra over an algebraically closed filed $K$ which is indecomposable and basic, unless otherwise notified.
When an algebra $A$ is given, we fix a complete set $\{e_1, \cdots, e_n\}$ of orthogonal primitive idempotents and put $P_i:=e_iA$;
in particular, if $A$ is presented by a quiver with relations, then we take such an $e_i$ as the primitive idempotent corresponding to each vertex.
All modules are right and finite dimensional.

We denote by $\Db(\mod-)$ and $\Kb(\proj-)$ the bounded derived category and the perfect derived category, respectively.
The Grothendieck group is denoted by $K_0(-)$.

We freely use elementary facts on silting/tilting complexes and mutations from \cite{Ai, AI} and refer to \cite{Sc} for Brauer graph algebras.
In this paper, we will often deal with symmetric algebras, and then we do not distinguish silting/tilting theory; see \cite{AI}.
\end{notationremark*}

\section{Brauer graph algebras}


First of all, let us recall equivalent conditions of tilting-discrete Brauer graph algebras.

\begin{theorem}\label{sdBGA}
The following are equivalent for a Brauer graph algebra $\Lambda$:
\begin{enumerate}
\item $\Lambda$ is tilting-discrete;
\item It is $\tau$-tilting finite;
\item The Cartan matrix of $\Lambda$ is positive definite;
\item It is regular;
\item The Grothendieck group $K_0(\smod\Lambda)$ of the stable module category of $\Lambda$ is finite;
\item The Brauer graph of $\Lambda$ has at most one cycle of odd length and none of even cycle.
\end{enumerate}
\end{theorem}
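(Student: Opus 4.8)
The plan is to prove the equivalences by assembling known results and a short ring of implications. The implications (1)$\Rightarrow$(2) and (2)$\Rightarrow$(3) are general facts about symmetric algebras (the first by \cite{AIR, DIJ, DIRRT}, the second by \cite{H}, as recalled in the introduction), so no new work is needed there. For (3)$\Leftrightarrow$(6): it is classical that the Cartan matrix of a Brauer graph algebra, expressed in a suitable basis indexed by edges and cycles, is built from the incidence data of the Brauer graph; one computes that it is positive definite precisely when the graph contains no even cycle and at most one odd cycle (even cycles produce a null vector, two independent odd cycles produce an indefinite direction). I would cite \cite{AAC} for this computation rather than redo it. The equivalence (6)$\Leftrightarrow$(4), where ``regular'' refers to the Brauer graph having the stated combinatorial shape, is a matter of matching terminology to \cite{AAC}.

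The substantive step is $(6)\Rightarrow(1)$: a Brauer graph algebra whose Brauer graph has at most one odd cycle and no even cycle is tilting-discrete. This is exactly the main theorem of \cite{AAC}, so I would invoke it directly. Conversely, if the Brauer graph has an even cycle or two odd cycles, then the corresponding null/indefinite vector for the Cartan matrix shows (3) fails, hence (2) fails by the contrapositive of (2)$\Rightarrow$(3), hence (1) fails; this closes the cycle among (1),(2),(3),(4),(6).

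For the remaining condition (5), concerning finiteness of $K_0(\smod\Lambda)$, I would use that for a symmetric algebra $\Lambda$ there is an exact sequence relating $K_0(\smod\Lambda)$ to $K_0(\proj\Lambda)$ via the Cartan map $K_0(\proj\Lambda)\to K_0(\mod\Lambda)$ given by the Cartan matrix $C$; the stable Grothendieck group is the cokernel-type quotient, which is finite if and only if $\det C\neq 0$, i.e. if and only if $C$ is nonsingular. Since for Brauer graph algebras nonsingularity of $C$ is equivalent to positive definiteness (the Cartan matrix of a Brauer graph algebra is positive semidefinite always, by the shape of its quadratic form, so it is nonsingular iff positive definite), this gives (5)$\Leftrightarrow$(3).

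The main obstacle is purely bookkeeping: making sure the combinatorial description of the Cartan matrix of a Brauer graph algebra is quoted in a form that simultaneously yields positive definiteness $\Leftrightarrow$ the cycle condition $\Leftrightarrow$ nonsingularity, so that conditions (3), (4), (5), (6) collapse together cleanly; the genuinely hard analytic input, namely $(6)\Rightarrow(1)$, is already available as the main result of \cite{AAC} and need not be reproven here.
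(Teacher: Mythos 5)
Your proposal is essentially correct, and the hard input is the same as in the paper: the equivalence of (6) with (1) is taken from \cite{AAC}, and $(1)\Rightarrow(2)\Rightarrow(3)$ are the general facts for symmetric algebras that you cite. Where you diverge is in how you close the loop among (3)--(6). The paper never argues directly that positive definiteness of the Cartan matrix is equivalent to the cycle condition; instead it runs $(3)\Rightarrow(4)$ (clear), $(4)\Leftrightarrow(5)$ by Tachikawa--Wakamatsu (finiteness of $K_0(\smod\Lambda)$ is equivalent to nonsingularity of $C$ for a symmetric algebra), and then $(5)\Leftrightarrow(6)$ by Antipov's computation of the free rank of the stable Grothendieck group of a symmetric special biserial algebra in terms of the numbers of vertices and edges and bipartiteness of the Brauer graph. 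You instead prove $(3)\Leftrightarrow(6)$ head-on from the structure of the Cartan matrix and deduce (4) and (5) from the observation that $C$ is always positive semidefinite, hence nonsingular iff positive definite. That observation is true (the Cartan matrix of a Brauer graph algebra is of the form $N^\tr M N$ for the vertex--edge incidence matrix $N$ and the positive diagonal matrix $M$ of multiplicities, so definiteness reduces to $\ker N=0$, which is exactly the ``tree or unique odd cycle'' condition), and it makes your route self-contained on the combinatorial side; but it is a nontrivial computation that the paper deliberately outsources --- note that the paper attributes the equivalences (3)--(6) to \cite[Theorem 4.38]{JGM} and the $(5)\Leftrightarrow(6)$ step to \cite{An2}, not to \cite{AAC}, so your citation for the Cartan-matrix computation should be adjusted. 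The trade-off: your route gives a cleaner conceptual picture (everything reduces to one quadratic form), while the paper's route avoids proving any structural formula for $C$ at the cost of invoking an extra graph-theoretic result on stable Grothendieck groups.

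One point to correct: condition (4), ``It is regular,'' refers to the Cartan matrix of $\Lambda$ (the ``it'' of condition (3)) being nonsingular, not to any combinatorial regularity of the Brauer graph; this is why $(3)\Rightarrow(4)$ is labelled ``clear'' and $(4)\Leftrightarrow(5)$ is the Tachikawa--Wakamatsu result. Your reading of (4) as a graph-theoretic condition to be matched against \cite{AAC} is a misinterpretation, but it is harmless here because the statement you actually need --- that for Brauer graph algebras nonsingularity of $C$ is equivalent to positive definiteness --- already appears in your treatment of (5).
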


The equivalences of (3)--(6) were shown in \cite[Theorem 4.38]{JGM}.
The proof below on the implications $(5)\Leftrightarrow(6)$ is due to \cite{An2}; see also \cite[Remark 4.42]{JGM}.

\begin{proof}
The implications $(6)\Leftrightarrow(1)\Rightarrow(2)\Rightarrow(3)\Rightarrow(4)\Leftrightarrow(5)$ hold, respectively by \cite[Theorem 6.7]{AAC}, by definition, by \cite[Proposition 3.2]{H}, clear, and by \cite[Proposition 1]{TW}.

We show the implications $(5)\Leftrightarrow(6)$ hold true.
Let $v$ and $n$ be the numbers of vertices and edges of $G$, respectively.
It follows from \cite[Theorem 6.1]{An2} that 
\[\begin{array}{c@{\iff}c}
K_0(\smod\Lambda)\ \mbox{has no free part} &
\begin{cases}
\ n=v-1 & \mbox{if $G$ is bipartite}; \\
\ n=v & \mbox{otherwise}.
\end{cases}
\end{array}\]
The former is just the case that $G$ is tree.
The latter is the case that $G$ has precisely one cycle and the cycle is of odd length.
This completes the proof.
\end{proof}

\begin{remark}
For a symmetric algebra of tubular type, the same equivalent conditions (1)--(5) of Theorem \ref{sdBGA} hold by \cite[Corollary 3.2]{AHMW}.
On the other hand, we know that this is not necessarily true for a symmetric algebra in general; neither the implications $(5)\Rightarrow(1)(2)$ nor $(3)\Rightarrow(1)(2)$ hold. 
We will give concrete examples later.
\end{remark}

It is well-known that the trivial extension of an algebra $A$ is a Brauer graph algebra with multiplicity identically one if and only if $A$ is a gentle algebra \cite{Sc}.
Now, we determine gentle algebras whose trivial extensions are tilting-discrete.

\begin{theorem}\label{deriveddiscrete}
The following conditions are equivalent for a gentle algebra $A$:
\begin{enumerate}
\item The trivial extension of $A$ is a Brauer graph algebra whose Brauer graph has at most one cycle of odd length and none of even length;
\item It is tilting-discrete;
\item It is $\tau$-tilting finite;
\item $A$ has a positive definite Cartan matrix;
\item It is a derived-discrete algebra with nonsingular Cartan matrix.
\end{enumerate}
\end{theorem}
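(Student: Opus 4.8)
The plan is to establish the cycle of implications $(2)\Rightarrow(3)\Rightarrow(4)$ using general machinery already quoted in the excerpt, reduce $(1)\Leftrightarrow(2)$ to Theorem~\ref{sdBGA}, and then do the genuinely combinatorial work in showing $(4)\Rightarrow(5)$ and $(5)\Rightarrow(1)$. More precisely: the trivial extension $\Lambda$ of a gentle algebra $A$ is a Brauer graph algebra with multiplicity identically $1$ by \cite{Sc}, so Theorem~\ref{sdBGA} applies verbatim to $\Lambda$ and gives $(1)\Leftrightarrow$ ``$\Lambda$ is tilting-discrete'' $\Leftrightarrow$ ``$\Lambda$ is $\tau$-tilting finite''. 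The subtlety is that conditions (2),(3),(5) in the statement are about $A$, not about $\Lambda$; so the real content is to connect $\tau$-tilting finiteness of $A$ (and its Cartan matrix, and derived-discreteness) with the combinatorics of the Brauer graph of $\Lambda$.

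First I would handle $(2)\Rightarrow(3)$: tilting-discreteness of the symmetric algebra $\Lambda$ forces $\Lambda$, hence (by \cite[Theorem 5.12(d)]{DIRRT}) the algebra $A$, to be $\tau$-tilting finite; alternatively one can argue directly that a tilting-discrete gentle algebra has only finitely many $2$-term silting complexes. For $(3)\Rightarrow(4)$ I would invoke \cite[Proposition 3.2]{H} exactly as in the proof of Theorem~\ref{sdBGA}: if the trivial extension of $A$ is $\tau$-tilting finite then $A$ has a positive definite Cartan matrix; but here I only know $A$ is $\tau$-tilting finite, so I first need that a $\tau$-tilting finite gentle algebra has $\tau$-tilting finite trivial extension — this should follow because for a gentle algebra the Brauer graph of the trivial extension is read off from the gentle quiver, and $\tau$-tilting finiteness of $A$ restricts the combinatorics (no ``band-producing'' configuration) enough that the resulting Brauer graph has no even cycle and at most one odd cycle. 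This is the place where I expect to lean on the structure theory of gentle algebras via their derived equivalence classification (AG-invariants).

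The main combinatorial steps are $(4)\Rightarrow(5)$ and $(5)\Rightarrow(1)$. For derived-discreteness, I would use Vossieck's characterization: a gentle algebra is derived-discrete if and only if its quiver has at most one cycle with an equal number of relations, equivalently the AG-invariant has a prescribed shape. The Cartan matrix of a gentle algebra is nonsingular precisely when that unique cycle (if present) has odd ``weight'' in the appropriate sense, and positive definiteness of the Cartan matrix of $A$ then translates, via the explicit recipe turning the gentle quiver into a Brauer graph, into: the Brauer graph of $\Lambda$ is a tree or has exactly one cycle and that cycle has odd length. That last statement is exactly condition (1), and closes the loop. The hard part will be the bookkeeping in translating between three different combinatorial encodings — the gentle quiver with relations, its AG-invariant/derived-discreteness data, and the Brauer graph of the trivial extension — and checking that ``positive definite Cartan matrix of $A$'' matches up on the nose with ``derived-discrete with nonsingular Cartan matrix'' and with the Brauer-graph condition; the positivity/signature computation of the Cartan form in terms of the cycles of the gentle quiver is the technical crux, and I would isolate it as a lemma.
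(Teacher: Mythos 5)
There is a genuine gap, and it begins with a misreading of the statement. In conditions (2) and (3) the pronoun ``It'' refers to \emph{the trivial extension of $A$}, not to $A$ itself (compare the version of this theorem stated in the introduction, where the first two items read ``The trivial extension of $A$ is tilting-discrete'' and ``It is $\tau$-tilting finite''); only in (5) does ``It'' refer back to $A$. Under the correct reading, $(1)\Leftrightarrow(2)\Leftrightarrow(3)$ is exactly Theorem \ref{sdBGA} applied to $\Lambda$ via \cite{Sc}, as you note, and $(3)\Rightarrow(4)$ is precisely \cite[Proposition 3.2]{H}, with nothing further to prove. Your attempt to repair your (mis)read version of $(3)\Rightarrow(4)$ rests on the claim that a $\tau$-tilting finite gentle algebra has $\tau$-tilting finite trivial extension, and that claim is false: the radical-square-zero gentle algebra on the two-vertex quiver with one arrow in each direction (Example \ref{TauTFvsPDC}) is $\tau$-tilting finite, even silting-discrete, but has singular Cartan matrix, so its trivial extension --- the Brauer graph algebra of the digon --- is $\tau$-tilting infinite by Theorem \ref{sdBGA}. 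So that route cannot work, and the theorem would actually be false with your reading of condition (3).

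The substantive implications are $(4)\Rightarrow(5)$ and $(5)\Rightarrow(1)$, and for these you only describe a plan (``translate between the three combinatorial encodings,'' ``isolate the signature computation as a lemma'') without carrying it out; the step you defer is the entire content. For comparison, the paper's route for $(4)\Rightarrow(5)$ is not a direct signature computation on the gentle quiver: positive definiteness of $C_A$ gives positive definiteness of $C_\Lambda=C_A+C_A^{\tr}$, hence $\Lambda$ is tilting-discrete by Theorem \ref{sdBGA} and domestic by \cite{BS}; then \cite{N} forces the Gabriel quiver of $A$ to have at most one cycle, and a case division (simply-connected or not, using \cite{ANS, AS2} and the clock condition via \cite{AS1, V}) leaves only iterated tilted algebras of Dynkin type and derived-discrete gentle one-cycle algebras violating the clock condition. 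For $(5)\Rightarrow(1)$ the paper uses the Bobi\'nski--Gei\ss--Skowro\'nski normal forms \cite{BGS}, checks that nonsingularity of $C_A$ forces the parameter $r$ there to be odd, and computes the Brauer graph of the trivial extension of the normal form explicitly. Your sketch of this last implication is in the right spirit, but none of the verification is supplied.
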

\begin{proof}
%
%
%
%
Assume that $A$ has a positive definite Cartan matrix.
By Theorem \ref{sdBGA} and \cite{BS}, we see that the trivial extension of $A$ is domestic. 
This implies from \cite{N} that the Gabriel quiver of $A$ admits at most one cycle.
Let us suppose that $A$ is simply-connected.
If $A$ is representation-infinite, then it follows from \cite{ANS} that $A$ is an iterated tilted algebra of extended Dynkin type $\widetilde{\mathbb{D}}$ or $\widetilde{\mathbb{E}}$, a contradiction.
If $A$ is representation-finite, then we conclude that $A$ is an iterated tilted algebra of Dynkin type \cite{AS2}.

Suppose that $A$ is not simply-conncected; in particular, it is not a tree.
As above, we obtain that $A$ is gentle one-cycle.
If $A$ satisfies the clock condition, then it is piecewise hereditary of extended Dynkin type $\widetilde{\AA}$ by \cite[Theorem(A)]{AS1}, a contradition.
Thus, we derive that $A$ does not satisfy the clock condition, whence it is derived-discrete by \cite{V}.

Let $A$ be a derived-discrete algebra with nonsingular Cartan matrix but not a piecewise hereditary algebra of Dynkin type.
Thanks to \cite{BGS}, it is seen that $A$ is derived equivalent to the algebra presented by the quiver and relations for some $1\leq r\leq n$ and $m\geq0$:
\[\xymatrix{
   &        &           &     & 1 \ar[r] &  \cdots \ar[r]  & n-r-1 \ar[rd] & \\
-m \ar[r] & -m+1 \ar[r] & \cdots \ar[r] & 0 \ar[ru]_{}="b"  &    &             &         & n-r \ar[ld]_{}="e"  \\
   &       &            &     & n-1 \ar[ul]_{}="a" &  \cdots \ar[l]_(0.4){}="c" & n-r+1 \ar[l]_(0.7){}="d" & 
\ar@/_/@{.}"a";"b"
\ar@/_1pc/@{.}"c";"a"
\ar@/_1.5pc/@{.}"e";"d"
}\]
Here, the dotted lines stand for the zero relations on two arrows.
In the case that $r<n$, it follows from \cite{BGS} that $r$ is odd, since $C$ is regular.
Assume that $r=n$.
We easily calculate the Cartan matrix of $A$ by hand, and then we have $\det C=2$ ($n$ is odd) and $\det C=0$ ($n$ is even).
Therefore, in any case, $r$ is odd.

Now, it is easy to see that the trivial extension $\Lambda$ of $A$ is derived equivalent to the Brauer graph algebra given by the Brauer graph:
\[\xymatrix@C=2cm{
&& \circ \ar@{-}[dl]_{n-1}\ar@{-}[r] & \circ \ar@{-}[d] \\
\circ \ar@{-}[dr]_{}="a" & \circ \ar@{-}[d]_0 & \circ \ar@{-}[dl]^{}="b" & \ar@{.}[dd] \\
& \circ \ar@{-}[dl]_{}="c"\ar@{-}[d]|{n-r+1}\ar@{-}[dr]^{}="d" && \\
\circ & \circ \ar@{-}[dr] & \circ & \ar@{-}[d] \\
&& \circ \ar@{-}[r] & \circ
\ar@/_/@{.}"a";"c"
\ar@/^/@{.}"b";"d"
}\] 
Observe that the unique cycle is of length $r$, and as seen above $r$ is odd.
Thus, it turns out that $\Lambda$ is tilting-discrete by Theorem \ref{sdBGA}.
\end{proof}


Let $\Lambda$ be a (not necessarily, symmetric) algebra.
As is well-known, the set $\{P_1,\cdots, P_n\}$ of isomorphism classes of indecomposable projective modules forms a basis of the Grothendieck group $K_0(\Kb(\proj\Lambda))$ of $\Kb(\proj\Lambda)$;
so, we have $K_0(\Kb(\proj\Lambda))\simeq \Z^n$.
Under this basis, we denote by $[X]$ the (column) vector of a perfect complex $X$ in $K_0(\Kb(\proj\Lambda))$. 

Let $T$ be a (basic) tilting complex with indecomposable decomposition $T_1\oplus\cdots\oplus T_n$.
The \emph{$g$-matrix} of $T$ is defined to be a matrix whose $j$-columns are $[T_j]$: say $\G_T$.

The following observation plays a crucial role.

\begin{lemma}\label{gmatrices}
Let $\Lambda$ be a (not necessarily, symmetric) algebra and $T$ its tilting complex.
Putting $\Gamma:=\End_{\Kb(\proj\Lambda)}(T)$, let $F:\Kb(\proj\Lambda)\to\Kb(\proj\Gamma)$ be a derived equivalence induced by $T$.
Then we have $\G_U=\G_T\cdot \G_{FU}$ for any tilting complex $U$ of $\Lambda$.
\end{lemma}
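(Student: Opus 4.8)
The plan is to descend to Grothendieck groups and simply read off the matrix of the induced isomorphism $K_0(F)\colon K_0(\Kb(\proj\Lambda))\to K_0(\Kb(\proj\Gamma))$ in the standard bases. First I would recall the defining feature of the situation: writing $T=T_1\oplus\cdots\oplus T_n$ for the indecomposable decomposition, the derived equivalence $F$ induced by $T$ sends each summand $T_j$ to the indecomposable projective $\Gamma$-module $e_j\Gamma$ (for instance $F=\RHom_\Lambda(T,-)$ gives $F(T_j)=\Hom_{\Kb(\proj\Lambda)}(T,T_j)=e_j\Gamma$). Hence, under the identifications $K_0(\Kb(\proj\Lambda))\simeq\Z^n$ and $K_0(\Kb(\proj\Gamma))\simeq\Z^n$ given by the bases of indecomposable projectives, the isomorphism $K_0(F)$ carries $[T_j]$ to the $j$-th standard basis vector of $\Z^n$.

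Next I would interpret this columnwise. By definition the columns of $\G_T$ are the vectors $[T_j]$, so the statement that $K_0(F)$ sends $[T_j]$ to $\mathbf e_j$ for every $j$ is exactly the matrix identity $K_0(F)\cdot\G_T=I_n$; equivalently, the matrix of $K_0(F)$ with respect to the two standard bases is $\G_T^{-1}$. (In particular $\G_T\in\mathrm{GL}_n(\Z)$, which is anyway automatic since $F$ is an equivalence.) Now I would take an arbitrary tilting complex $U=U_1\oplus\cdots\oplus U_n$ of $\Lambda$; since $F$ is a triangle equivalence, $FU=FU_1\oplus\cdots\oplus FU_n$ is the indecomposable decomposition of a tilting complex of $\Gamma$, so $\G_{FU}$ is defined and its $j$-th column equals $[FU_j]=K_0(F)[U_j]=\G_T^{-1}[U_j]$. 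Reading this off columnwise gives $\G_{FU}=\G_T^{-1}\cdot\G_U$, which rearranges to $\G_U=\G_T\cdot\G_{FU}$.

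There is no serious obstacle here; the whole content is linear algebra on the columns of the relevant $g$-matrices. The only points needing care are the identification $F(T_j)\cong e_j\Gamma$ (i.e. being precise about what "derived equivalence induced by $T$" means and which basis of $K_0$ is used on the $\Gamma$-side) and the bookkeeping of the matrix product: reading the defining relations columnwise forces $\G_T$ to appear on the \emph{left}, so one obtains $\G_U=\G_T\cdot\G_{FU}$ rather than the reverse order.
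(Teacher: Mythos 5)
Your proof is correct and is precisely the standard argument the paper leaves implicit (the lemma is stated without proof there): identify $K_0(F)$ with $\G_T^{-1}$ via $F(T_j)\cong e_j\Gamma$ and read off the columns. The two points you flag — the convention $F=\RHom_\Lambda(T,-)$ giving $\Hom_{\Kb(\proj\Lambda)}(T,T_j)\cong e_j\Gamma$ as right $\Gamma$-modules, and the resulting placement of $\G_T$ on the left — are exactly the ones that need care, and you handle both correctly.
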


We can easily check the following observation.

\begin{lemma}\label{itm}
Let $\Lambda$ be the Brauer graph algebra of a Brauer graph $G$ and $i$ a nonleaf edge of $G$.
Then the left tilting mutation $T:=\mu_{P_i}^-(\Lambda)$ of $\Lambda$ with respect to $P_i$ is of the form:
\[T=\bigoplus\left\{
\begin{array}{c}
\xymatrix@R=2mm{
P_i \ar[r] & P_j\oplus P_k\ ;& \\
0 \ar[r] & P_\ell & (\ell\neq i).
}
\end{array}\right.\]
Here, the edges $j$ and $k$ are the predecessors of $i$ in G; possilby, $j=k$.
In particular, acting matrices to (row) vectors from right, the $g$-matrix $\G_T$ has an eigenvalue one with identically one eigenvector. 
\end{lemma}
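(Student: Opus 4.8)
The plan is to compute the left mutation $\mu_{P_i}^-(\Lambda)$ directly from the combinatorics of the Brauer graph $G$, using the standard description of projective modules over a Brauer graph algebra and the recipe for irreducible silting mutation. First I would recall that, over a Brauer graph algebra, the radical of the indecomposable projective $P_i$ decomposes (up to the socle) along the two rotations at the two endpoints of the edge $i$: concretely, $\rad P_i / \soc P_i$ is a direct sum of two uniserial modules, and the tops of these two pieces are $S_j$ and $S_k$, where $j$ and $k$ are the edges obtained from $i$ by one step of the cyclic ordering at each of its two vertices — i.e. the predecessors of $i$. (When $i$ has a vertex of valency one, or when the two rotations coincide, one gets $j=k$ or a degenerate uniserial piece; since $i$ is assumed to be a nonleaf edge, $P_i$ genuinely surjects onto such pieces and the mutation is as stated.) This identifies the minimal left $\add(\Lambda/P_i)$-approximation of $P_i$ as a map $P_i \to P_j \oplus P_k$; cf. \cite{Sc}.

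Next I would invoke the definition of irreducible left mutation: $\mu_{P_i}^-(\Lambda)$ is obtained by replacing the summand $P_i$ with the cone of this left approximation, leaving all other $P_\ell$ ($\ell \neq i$) unchanged. This gives exactly the displayed form of $T$: the summand $(P_i \to P_j \oplus P_k)$ sitting in degrees $0$ and $-1$, together with the stalk complexes $P_\ell$ for $\ell \neq i$. The only point requiring a little care is that the left approximation is genuinely surjective onto each of $P_j$, $P_k$ (so that the cone has the expected shape and no summand collapses) — this is where the nonleaf hypothesis on $i$ is used, and it follows from inspecting the uniserial structure of $\rad P_j$ and $\rad P_k$ at the vertices shared with $i$.

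For the statement about the $g$-matrix, I would read off the classes in $K_0(\Kb(\proj\Lambda))$. For $\ell \neq i$ the $\ell$-th column of $\G_T$ is the standard basis vector $e_\ell$. For the mutated summand, $[P_i \to P_j \oplus P_k] = [P_j] + [P_k] - [P_i] = e_j + e_k - e_i$. Thus, acting on row vectors from the right, the row vector $\mathbf{1} = (1,1,\dots,1)$ satisfies $\mathbf{1}\,\G_T = \mathbf{1}$: indeed the $i$-th entry of $\mathbf{1}\,\G_T$ is $1 + 1 - 1 = 1$ coming from the $j,k,i$ positions, and every other entry is unchanged. Hence $\mathbf{1}$ is an eigenvector of $\G_T$ (acting from the right) with eigenvalue $1$, which is precisely the asserted conclusion.

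The main obstacle is the first step: getting the structure of $\rad P_i$ and the identification of the left $\add(\Lambda/P_i)$-approximation right in all the degenerate configurations (loops at a vertex, valency-one vertices at one end of $i$, the case $j=k$). None of this is deep — it is bookkeeping with the Brauer-graph combinatorics from \cite{Sc} — but it is the only place where something could go wrong, and it is exactly why the hypothesis "$i$ a nonleaf edge'' is imposed. Once the approximation is pinned down, the rest is a one-line computation in the Grothendieck group.
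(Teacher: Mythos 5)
Your argument is correct and is essentially the one the paper has in mind: the paper states this lemma with no proof at all (``we can easily check the following observation''), and your route --- read off the minimal left $\add(\Lambda/P_i)$-approximation $P_i\to P_j\oplus P_k$ from the biserial structure of $P_i$, form the cone, and observe that the mutated column $e_j+e_k-e_i$ leaves the all-ones row vector fixed --- is exactly the intended computation. The only bookkeeping point (which you already flag) is the degenerate configuration where a predecessor half-edge of $i$ belongs to $i$ itself (e.g.\ a loop whose two half-edges are adjacent in the cyclic order); there one must pass to the next predecessor distinct from $i$, which yields $j=k$ and a column of the form $2e_j-e_i$, so the eigenvector conclusion is unaffected.
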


For a silting complex $T$ of an algebra $\Lambda$, $\C_T$ denotes the connected component of the silting quiver of $\Lambda$ containing $T$.

Now, we give a new example of tilting-disconnected symmetric algebras.

\begin{theorem}\label{tdcBGA}
Assume that a Brauer graph $G$ is either of the following:
\begin{enumerate}[{\rm (i)}]
\item it has only one vertex and at least two edges, or;
\item it has precisely two vertices, at least two edges and is bipartite.
\end{enumerate}
Then the Brauer graph algebra $\Lambda$ of $G$ is not tilting-connected.
\end{theorem}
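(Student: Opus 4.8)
The plan is to exhibit, for a Brauer graph algebra $\Lambda$ as in (i) or (ii), a tilting complex $T$ whose connected component $\C_T$ of the tilting quiver does not contain $\Lambda$ itself; equivalently, to show that the standard component $\C_\Lambda$ is not all of the tilting quiver. The natural candidate for such a $T$ is the one produced by a sequence of irreducible left mutations $\mu^-_{P_i}$ at nonleaf edges, and the strategy is to track $g$-matrices: by Lemma \ref{gmatrices}, composing along a path in $\C_\Lambda$ multiplies $g$-matrices, so the $g$-matrix of any tilting complex in $\C_\Lambda$ is a product of the elementary matrices coming from irreducible mutations. If we can find an invariant shared by all these elementary matrices but violated by some tilting complex $T$, then $T\notin\C_\Lambda$, hence $\Lambda$ is not tilting-connected.

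The invariant is supplied by Lemma \ref{itm}: for a nonleaf edge $i$, the $g$-matrix of $\mu^-_{P_i}(\Lambda)$ has the all-ones row vector as a left eigenvector with eigenvalue $1$. First I would argue that in cases (i) and (ii) the analysis reduces to this situation. In case (i), the Brauer graph is a bouquet of loops at one vertex, and \emph{every} edge is a nonleaf edge; in case (ii), the graph is bipartite with two vertices and all edges joining them, so again every edge is a nonleaf edge. Thus every irreducible left mutation of $\Lambda$ is covered by Lemma \ref{itm}, and — after passing to the derived-equivalent endomorphism algebra, which is again a Brauer graph algebra of a graph of the same combinatorial shape (still only nonleaf edges) — the same holds at every vertex of $\C_\Lambda$. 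Consequently, acting on row vectors from the right, every $g$-matrix $\G_U$ with $U\in\C_\Lambda$ fixes the all-ones vector $\mathbf{1}$, i.e. all row sums of $\G_U$ equal $1$.

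It then remains to produce one tilting complex $T$ of $\Lambda$ whose $g$-matrix does \emph{not} fix $\mathbf{1}$. Here I would use that $G$ has at least two edges together with the presence of an even cycle or a loop: such $G$ is $\tau$-tilting infinite (Theorem \ref{sdBGA}, since the graph has an even cycle in case (ii), or — in case (i) with a loop — fails the odd-cycle condition once there are $\geq 2$ edges), so $\Lambda$ admits infinitely many two-term tilting complexes, and in particular one whose $g$-matrix, as an integer matrix, has a row sum different from $1$; concretely one can take a two-term complex supported on a suitable pair of edges forming an even cycle and compute its $[T_j]$ directly from the Brauer-graph combinatorics, getting an entry like $2$ in a row while the complementary entry stays $0$. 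Since this $\G_T$ moves $\mathbf{1}$, we conclude $T\notin\C_\Lambda$.

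The main obstacle I anticipate is the bookkeeping in the middle step: verifying that the "all edges are nonleaf" property, and hence the applicability of Lemma \ref{itm}, genuinely propagates to every algebra derived-equivalent to $\Lambda$ via a mutation path — that is, that the Brauer graphs appearing at the vertices of $\C_\Lambda$ really do retain the shape (i) or (ii). This is where one must invoke the explicit description of how Brauer graphs transform under tilting mutation at a nonleaf edge and check it preserves "one vertex, $\geq 2$ loops" respectively "two vertices, bipartite, $\geq 2$ edges"; everything else (the $g$-matrix eigenvector argument and the exhibition of a bad $T$) is routine once this structural stability is in hand.
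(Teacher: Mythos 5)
Your setup coincides with the paper's: combine Lemma \ref{itm} (every irreducible mutation at a nonleaf edge has $g$-matrix fixing the all-ones row vector) with Lemma \ref{gmatrices}, and observe that in cases (i) and (ii) every edge is nonleaf and that this shape is preserved along the mutation path (the paper disposes of this stability by quoting that Kauer moves do not change the property of $G$, citing \cite{An} — exactly the point you flagged as the main bookkeeping obstacle). Up to the cosmetic slip of saying ``row sums'' where the convention of Lemma \ref{itm} gives \emph{column} sums (the columns of $\G_U$ are the classes $[U_j]$, and $\mathbf{1}\cdot\G_U=\mathbf{1}$ means each $[U_j]$ has coordinate sum one), this part is fine.

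The gap is in your final step, the exhibition of a tilting complex violating the invariant. From ``$\Lambda$ is $\tau$-tilting infinite, hence has infinitely many two-term tilting complexes'' you conclude ``in particular one whose $g$-matrix has a (column) sum different from $1$.'' This is a non sequitur: the set of integer matrices with all column sums equal to $1$ is infinite, so infinitude alone produces nothing. Worse, for these algebras the natural two-term complexes do \emph{not} violate the invariant: in Example \ref{singularandnoupperbound} the indecomposable two-term pretilting complexes $X_t$ of the trivial extension of the $\ell$-Kronecker algebra (which for $\ell=2$ is the digon algebra, the basic instance of case (ii)) have $g$-vectors $\left(\begin{smallmatrix} a_{t+1} \\ -a_t\end{smallmatrix}\right)$ with $a_{t+1}-a_t=1$ when $\ell=2$, i.e.\ coordinate sum exactly $1$. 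Your concrete candidate --- a column with an entry $2$ and the complementary entry $0$ --- cannot be the class of an indecomposable summand of a two-term tilting complex at all: for a minimal two-term complex $P^{-1}\to P^0$ the class is $[P^0]-[P^{-1}]$ with $P^0,P^{-1}$ sharing no summands, and $(2,0)^{\tr}$ forces $X\simeq P_1^{\,2}$, which is decomposable. So as written the witness does not exist where you are looking for it.

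The repair is immediate and is what the paper does: take $T=\Lambda[1]$. Its $g$-matrix is $-E$, whose column sums are $-1$, so it cannot lie in $\C_\Lambda$; since $\Lambda[1]$ is a tilting complex, the tilting quiver is disconnected. (Amusingly, $\Lambda[1]$ is itself two-term, so a correct witness of the kind you sought does exist --- but it is the shift, not something produced by the $\tau$-tilting-infiniteness argument you invoke.)
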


The idea of a proof comes from a private conversation with Grant, Iyama and the author circa 2010, in which we gave the first example of tilting-disconnected algebras; it was the Brauer graph algebra of digon.

\begin{proof}
Note first that $G$ owns no leaves.
By Lemma \ref{itm}, we see that the $g$-matrix of every irreducible tilting mutation of $\Lambda$ has eigenvalue one.
Since any Kauer move does not change the property of $G$ \cite{An},
it follows from Lemma \ref{gmatrices} that all the $g$-matrices of tilting complexes in $\C_\Lambda$ admit eigenvalue one.
However, of course, the 1-shift of $\Lambda$ is not really the case.
Thus, it turns out that the 1-shift of $\Lambda$ is not reachable to $\Lambda$ by iterated irreducible tilting mutation, whence $\Lambda$ is tilting-disconnected.
\end{proof}

We remark that this proof does not tell us how many connected components of the tilting quiver there exist; for now, we can not measure whether or not, even-shifts $\Lambda[2\Z]$ belong to the same component by only the information of eigenvalues of $g$-matrices.
Furthermore, it is still an open problem if such algebras are weakly silting-connected in the sense of August--Dugas \cite{AD}.



\section{Trivial extension algebras}\label{sec:TEA}

In this section, we tackle the following problem.

\begin{question}\label{SDvsTauTF2}
Is a $\tau$-tilting finite symmetric algebra tilting-discrete?
\end{question}


This question does not necessarily have an affirmative answer.
Here is an example.

\begin{example}\label{Am}
Let $\ell, m\geq1$.
Let $\Lambda$ be the trivial extension of the algebra $A_m$ presented by the quiver $\xymatrix{1 \ar@(lu,ld)_x\ar@{=>}[r] & 2}$ with relations $x^m=0=xy$.
Here, the double arrow denotes $\ell$ arrows being drawn from 1 to 2 which are all labelled by $y$.
We remark that $A_1$ is the $\ell$-Kronecker algebra (see Proposition \ref{Kronecker}); so, we here exclude the case: suppose $m>1$. 

Then, $\Lambda$ is given by the quiver $\xymatrix{1 \ar@(ru, lu)_x\ar@(rd,ld)^a\ar@{=>}@<3pt>[r]^y & 2 \ar@{=>}@<3pt>[l]^z}$; we omit the relations.
Observe that $x$ and $a$ belong to the center of $\Lambda$, which implies that the $\tau$-tilting quivers of $\Lambda$ and $\Lambda/(x, a)$ coincide by \cite[Theorem 11]{EJR}.
In particular, $\Lambda$ is $\tau$-tilting finite if and only if $\ell=1$.
This is because $\Lambda/(x, a)$ is representation-finite; it is derived-discrete, and so silting-discrete.

Let $T$ be the (irreducible) left mutation of $A_m$ with respect to $P_2$; it is of the form 
\[\bigoplus{
\left\{
 \begin{array}{c}
  {\xymatrix@R=3mm{
   & P_1 \\
   P_2 \ar[r]^{(y)} & {P_1}^\ell
  }}
 \end{array}
\right.
}\]
Note that $T$ is a tilting complex (module), and so its endomorphism algebra $B_m$ is derived equivalent to $A_m$.
Moreover, $B_m$ is presented by the quiver $\xymatrix{1 \ar@{=>}@<3pt>[r]^z & 2 \ar@{=>}@<3pt>[l]^y}$ with relations
\[z_iy_j=0\ (i\neq j),\ z_iy_i=z_jy_j, (zy)^{m-1}z=0.\]
This leads to the fact that the trivial extension $\Gamma$ of $B_m$ has precisely $\ell$ added arrows to $B_m$ from 1 to 2.
In particular, $\Gamma$ admits a multiple arrow for any $\ell$, whence it is not $\tau$-tilting finite; so, it is not tilting-discrete, either. 
Since $\Lambda$ is derived equivalent to $\Gamma$, it turns out that $\Lambda$ is not tilting-discrete.
(Note that if $\ell=1=m$, then $\Lambda$ is tilting-discrete.)
\end{example}

We will focus on the algebra $A_m$ in the subsection \ref{subsec:Am}.

Thus, a next problem is when a $\tau$-tilting finite symmetric algebra is tilting-discrete.

As is seen in the introduction, if the trivial extension of an algebra $A$ is $\tau$-tilting finite, then (1) $A$ is $\tau$-tilting finite and (2) has a positive definite Cartan matrix.
%
%
In this case, the Coxeter matrix of $A$ satisfies the following conditions (see Appendix \ref{append:cyclotomic}):
\begin{enumerate}[(i)]
\item it has no eigenvalue one, but all eigenvalues have absolute value one;
\item it is diagonalizable.
\end{enumerate}

We check the conditions (1) and (2) above are independent.

\begin{example}\label{TauTFvsPDC}
The implication (1)$\Rightarrow$(2) is not always true.
In fact, the radical-square-zero algebra given by the quiver $\xymatrix{1 \ar@<2pt>[r] & 2 \ar@<2pt>[l]}$, which is gentle, is $\tau$-tilting finite (more strongly, silting-discrete),  but it has singular Cartan matrix.

The condition (2) does not necessarily indicate the condition (1), either.
Indeed, let $A$ be the algebra given by the quiver $\xymatrix{1 \ar@<4pt>[r]^x\ar[r]|y & 2 \ar@<4pt>[l]^z}$ with relations $xz=0=yz$ \cite[Example 2.1(2)]{S}.
Then, this is $\tau$-tilting infinite,
but the Cartan matrix $\begin{pmatrix} 1 & 1 \\ 2 & 3\end{pmatrix}$ satisfies the condition (2).
\end{example}

We might naturally hope that the trivial extension of a $\tau$-tilting finite algebra with positive definite Cartan matrix
is at least $\tau$-tilting finite.
However, this fails in general; the following algebra additionally has finite global dimension; so, it is of cyclotomic type.
%

\begin{example}\label{TauTFwPDbutSID}
Let $A$ be the radical-square-zero algebra given by the quiver:
\[\xymatrix{
1 \ar[r]\ar[d]\ar[dr] & 2 \ar[d] \\
3 \ar[r] & 4
}\]
Note that this is representation-infinite.
Its Cartan and Coxeter matrices are the following:
\[\begin{array}{c@{\hspace{5mm}\mbox{and}\hspace{5mm}}c@{\ \xrightarrow{\mbox{{\scriptsize Cox. Poly.}}}\ }c}
{C=\begin{pmatrix}
1 & 0 & 0 & 0 \\
1 & 1 & 0 & 0 \\
1 & 0 & 1 & 0 \\
1 & 1 & 1 & 1
\end{pmatrix}} &
{\Phi=\begin{pmatrix}
0 & 0 & 0 & -1 \\
0 & 0 & 1 & -1 \\
0 & 1 & 0 & -1 \\
-1 & 1 & 1 & -1 \\
\end{pmatrix}} &
(x+1)^2(x^2-x+1)
\end{array}\]
Thanks to \cite[Theorem 3.1]{Ad1}, we observe that $A$ is $\tau$-tilting finite.
Moreover, we can easily check that the form $x^\tr Cx$ is positive definite;
hence, $A$ can be chosen as our algebra.

On the other hand, the trivial extension $\Lambda$ of $A$ admits a path algebra of extended Dynkin type $\widetilde{\AA}$ as a factor, which is $\tau$-tilting infinite, and hence, so is $\Lambda$.

We will show that $A$ is silting-indiscrete in the subsection \ref{ENSA1}.
\end{example}

Even if any algebra with isomorphic trivial extension is $\tau$-tilting finite and has a positive definite Cartan matrix, the trivial extension is not necessarily $\tau$-tilting finite.

\begin{example}\label{withsameTE}
Let $A$ be the algebra given by the quiver $\xymatrix{1 \ar@<2pt>[r]^z & 2 \ar@<2pt>[l]^y}$ with relation $zyz=0$; this is $B_2$ for $\ell=1$ as in Example \ref{Am}.
Observe that $A$ is $\tau$-tilting finite and has a positive definite Cartan matrix.
If another algebra $B$ has isomorphic trivial extension as $A$, then there are an algebra $R$ and an $R$-$R$ bimodule $M$ such that $A\simeq R\ltimes M$ and $B\simeq R\ltimes DM$ \cite{Wak}.
In case, $R$ must be the path algebra $\xymatrix{1 & 2 \ar[l]^y}$, and then we have $M=DP_2\otimes P_2$; so, $B$ is isomorphic to $A$.
However, the trivial extension of $A$ is not $\tau$-tilting finite.
\end{example}

Now, we give several cases in which Question \ref{SDvsTauTF2} has an affirmative answer.

As we have already seen in Theorem \ref{sdBGA} (\ref{deriveddiscrete}), the first example is the case of Brauer graph algebras (the trivial extensions of gentle algebras).


The trivial extension of a piecewise hereditary algebra of Dynkin type (abbr. PHAD) is symmetric and representation-finite, and so it is tilting-discrete \cite[Theorem 5.2]{Ai}.
Thus, we next consider when a $\tau$-tilting finite algebra with positive definite Cartan matrix is a PHAD.
Let $\H$ denote the (Morita equivalence) class of algebras which are either $\tau$-tilting infintie, having non-positive definite Cartan matrices, or PHADs.
%
We put the following theorem as a slogan.

\begin{theorem}\label{pseudohereditary}
The following are equivalent for $A\in\H$:
\begin{enumerate}
\item The trivial extension of $A$ is tilting-discrete;
\item It is $\tau$-tilting finite;
\item $A$ is a $\tau$-tilting finite algebra with positive definite Cartan matrix;
\item It is a piecewise hereditary algebra of Dynkin type.
\end{enumerate}
\end{theorem}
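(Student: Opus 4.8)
The plan is to prove Theorem~\ref{pseudohereditary} by showing the cycle of implications $(4)\Rightarrow(1)\Rightarrow(2)\Rightarrow(3)\Rightarrow(4)$, where the membership hypothesis $A\in\H$ is only needed for the last step. The implication $(4)\Rightarrow(1)$ is essentially free: if $A$ is a piecewise hereditary algebra of Dynkin type, then its trivial extension $\Lambda$ is symmetric and representation-finite (this is classical, and is the reason PHADs were singled out), and by \cite[Theorem 5.2]{Ai} every representation-finite symmetric algebra is tilting-discrete. The implication $(1)\Rightarrow(2)$ is immediate from the fact, recalled in the introduction, that tilting-discreteness forces $\tau$-tilting finiteness (via \cite{AIR, DIJ, DIRRT}). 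The implication $(2)\Rightarrow(3)$ is also already available: if the trivial extension of $A$ is $\tau$-tilting finite, then by \cite[Theorem 5.12(d)]{DIRRT} $A$ is $\tau$-tilting finite, and by \cite[Proposition 3.2]{H} $A$ has a positive definite Cartan matrix; this is exactly conditions (1) and (2) from the introduction.

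The real content is $(3)\Rightarrow(4)$, and this is where the hypothesis $A\in\H$ enters crucially. Here is where I would spend the effort. Suppose $A$ is $\tau$-tilting finite with positive definite Cartan matrix and $A\in\H$. By definition of $\H$, the algebra $A$ is either $\tau$-tilting infinite, or has a non-positive-definite Cartan matrix, or is a PHAD; the first two alternatives are excluded by hypothesis, so $A$ must be a PHAD. In other words, the statement $(3)\Rightarrow(4)$ for $A\in\H$ is essentially a tautology once one unwinds the definition of the class $\H$ — the class was defined precisely so that this implication becomes trivial. So strictly speaking there is no hard step in proving this theorem as literally stated; the theorem is a "slogan" (as the text itself says) packaging the genuinely hard (and presumably open or separately-established) problem of showing that \emph{every} $\tau$-tilting finite algebra with positive definite Cartan matrix is a PHAD.

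Consequently, the way I would actually write the proof is: assemble the three easy implications as above, then observe that $(3)\Rightarrow(4)$ holds for $A\in\H$ by the very definition of $\H$. I would phrase it roughly as follows. The implication $(4)\Rightarrow(1)$ holds since the trivial extension of a PHAD is symmetric representation-finite, hence tilting-discrete by \cite[Theorem 5.2]{Ai}; $(1)\Rightarrow(2)$ holds by \cite{AIR, DIJ, DIRRT}; $(2)\Rightarrow(3)$ holds by \cite[Theorem 5.12(d)]{DIRRT} and \cite[Proposition 3.2]{H}; and $(3)\Rightarrow(4)$ holds because, by hypothesis, $A$ lies in $\H$ and is neither $\tau$-tilting infinite nor has a non-positive definite Cartan matrix, so it must be a PHAD.

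The main obstacle — if one reads the theorem as a genuine mathematical assertion rather than a bookkeeping device — is of course establishing that $\tau$-tilting finiteness together with a positive definite Cartan matrix already implies being a PHAD, which would let one \emph{drop} the hypothesis $A\in\H$ entirely. That is the substantive question lurking behind the slogan, and I expect the paper does not claim to resolve it in general; the role of Theorem~\ref{pseudohereditary} is to record that the lifting problem (from $\tau$-tilting finiteness to tilting-discreteness of the trivial extension) reduces to that recognition problem. So in the write-up I would be careful to flag that the force of the theorem is entirely in the equivalence $(1)\Leftrightarrow(2)\Leftrightarrow(3)$ for PHADs, and that $(3)\Rightarrow(4)$ is true on $\H$ by fiat.
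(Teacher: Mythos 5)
Your proposal is correct and matches the paper's intended argument exactly: the paper gives no formal proof of Theorem~\ref{pseudohereditary}, but the surrounding text supplies precisely your chain $(4)\Rightarrow(1)$ (trivial extensions of PHADs are representation-finite symmetric, hence tilting-discrete by \cite[Theorem 5.2]{Ai}), $(1)\Rightarrow(2)$ and $(2)\Rightarrow(3)$ (via \cite{DIRRT} and \cite[Proposition 3.2]{H}), with $(3)\Rightarrow(4)$ holding by the very definition of $\H$. Your reading of the theorem as a ``slogan'' whose substantive content is deferred to the question of which algebras lie in $\H$ (addressed in Appendix~\ref{append:phad}) is also exactly right.
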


The class $\H$ contains good classes of algebras and is closed under certain properties.
We will explore which algebras belong to $\H$ in Appendix \ref{append:phad}.
For example, we obtain the following result by a similar argument to Proposition \ref{stableequivalent}.

\begin{corollary}
Let $\Lambda$ be a symmetric algebra stable equivalent to the trivial extension of a derived hereditary algebra.
Then $\Lambda$ is tilting-discrete if and only if it is $\tau$-tilting finite.
\end{corollary}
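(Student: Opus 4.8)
The plan is to reduce everything, via Theorem \ref{pseudohereditary}, to the representation-finite case. The ``only if'' direction is free: tilting-discreteness always implies $\tau$-tilting finiteness \cite{AIR, DIJ, DIRRT}. So I would assume $\Lambda$ is $\tau$-tilting finite and aim to show it is tilting-discrete.

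Write $\Lambda$ as stably equivalent to $T(B)$, the trivial extension of an algebra $B$ with $\Db(\mod B)\simeq\Db(\mod H)$ for a hereditary algebra $H$. The first step is to observe that $B$ lies in the class $\H$: if $H$ is of Dynkin type then $B$ is a piecewise hereditary algebra of Dynkin type; if $H$ is not of Dynkin type then the Euler form of $H$ fails to be positive definite, and since this property is a derived invariant (the Euler form, defined on $K_0\simeq\Z^n$ because piecewise hereditary algebras have finite global dimension, transforms by a unimodular change of basis, which preserves positive-definiteness), $B$ has a non-positive definite Cartan matrix. Either way $B\in\H$, so Theorem \ref{pseudohereditary} is available for $B$.

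Next I would transport the $\tau$-tilting finiteness of $\Lambda$ across the stable equivalence to conclude that $T(B)$ is $\tau$-tilting finite; by Theorem \ref{pseudohereditary} this forces $B$ to be a piecewise hereditary algebra of Dynkin type, and hence $T(B)$ to be representation-finite and symmetric (as recalled in the paragraph before Theorem \ref{pseudohereditary}, cf. \cite[Theorem 5.2]{Ai}). Then, using that representation-finiteness is a stable invariant, $\Lambda$ is itself representation-finite; being symmetric, it is tilting-discrete by \cite[Theorem 5.2]{Ai}. This mirrors the proof of Proposition \ref{stableequivalent}.

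The one place that needs genuine input --- and the step I expect to be the main obstacle --- is that $\tau$-tilting finiteness is preserved under stable equivalence of symmetric algebras; this is exactly the fact underpinning Proposition \ref{stableequivalent}, and once it is granted the remaining steps are a routine assembly of Theorem \ref{pseudohereditary}, the derived invariance of the (positive-definite) Cartan/Euler form, the stable invariance of representation type, and \cite[Theorem 5.2]{Ai}.
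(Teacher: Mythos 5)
Your reduction to Theorem \ref{pseudohereditary} is the right idea, but the step you single out as the crux --- that $\tau$-tilting finiteness passes from $\Lambda$ across the stable equivalence to the trivial extension of $B$ --- is not merely unjustified: it is false in general, and the paper's own central counterexample shows this. In Example \ref{Am} (with $\ell=1$ and $m>1$) the trivial extension $\Lambda$ of $A_m$ is $\tau$-tilting finite, while the derived equivalent symmetric algebra $\Gamma$ (the trivial extension of $B_m$) is $\tau$-tilting infinite; since a derived equivalence between symmetric algebras induces a stable equivalence, $\tau$-tilting finiteness is not a stable (nor even a derived) invariant of symmetric algebras. Moreover, this transport is not ``the fact underpinning Proposition \ref{stableequivalent}'': that proposition rests on the Peng--Xiao theorem \cite{PX}, a structure result which makes any such transport unnecessary.

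The intended ``similar argument'' runs as follows. Replace the derived hereditary algebra $B$ by a hereditary algebra $H$ derived equivalent to it (this changes its trivial extension only up to derived, hence stable, equivalence), and apply \cite{PX} directly to the self-injective algebra $\Lambda$: it is then \emph{isomorphic} to the trivial extension of an iterated tilted algebra $C$ of the same type as $H$. Since $C$ is piecewise hereditary, it lies in $\H$ (case (0) of Appendix \ref{append:phad}), and Theorem \ref{pseudohereditary} applied to $C$ gives exactly the equivalence between tilting-discreteness and $\tau$-tilting finiteness of $\Lambda$. With this route your remaining steps become superfluous: there is no need to invoke the stable invariance of representation-finiteness, and no need for the case distinction on whether $H$ is of Dynkin type via positive definiteness of the Euler form, because piecewise hereditary algebras of any type already belong to $\H$.
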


Let $Q$ be an acyclic quiver.
A \emph{cluster-tilted algebra of type} $Q$ is the endomorphism algebra of a cluster-tilting object of the cluster category $\C_Q:=\Db(\mod KQ)/F$, where $F:=\tau^{-1}\circ[1]$.
As is well-known, a cluster-tilted algebra is given by the relation extension $R\ltimes\Ext_R^2(DR, R)$ of some tilted algebra $R$ of type $Q$ and is always at most 1-Iwanaga--Gorenstein and has global dimension 1 or $\infty$.
Moreover, we also know that a cluster-tilted algebra of type $Q$ is representation-finite if and only if $Q$ is of Dynkin type.
In the case, its Cartan matrix admits only $\{0, 1\}$-entries.
We refer to \cite{As, ABS, BMR} for more details.

\begin{theorem}\label{clustertilted}
Let $A$ be a nonhereditary cluster-tilted algebra of type $Q$. Then the following conditions are equivalent:
\begin{enumerate}
\item The trivial extension of $A$ is tilting-discrete;
\item It is $\tau$-tilting finite;
\item $Q$ is of Dynkin type $\AA$ and $A$ has precisely one oriented 3-cycle.
\end{enumerate}
\end{theorem}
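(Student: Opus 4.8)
The plan is to establish the cycle of implications $(1)\Rightarrow(2)\Rightarrow(3)\Rightarrow(1)$. The implication $(1)\Rightarrow(2)$ is immediate, since the trivial extension $T(A)$ of $A$ is symmetric and $\tau$-tilting finiteness is necessary for tilting-discreteness \cite{AIR, DIJ, DIRRT}. Throughout the other two implications I will use that, when $T(A)$ is $\tau$-tilting finite, $A$ is $\tau$-tilting finite \cite[Theorem 5.12(d)]{DIRRT} with positive-definite Cartan matrix \cite{H}, together with the structural facts that a cluster-tilted algebra of Dynkin type $\AA$ is a gentle algebra whose Gabriel quiver is a triangulation quiver, whose relations are precisely the composable pairs of arrows lying in a common oriented $3$-cycle, and all of whose oriented cycles are such $3$-cycles.

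For $(3)\Rightarrow(1)$: if $Q$ is of type $\AA$ and $A$ has precisely one oriented $3$-cycle, then $Q_A$ is a gentle one-cycle quiver whose unique cycle is a $3$-cycle on which all three composable length-two paths are zero relations. Hence the clock condition fails on this cycle (the three relations all wind the same way around it), so $A$ is derived-discrete by \cite{V}, and a short computation gives $\det C_A=2\neq 0$. Thus $A$ is a gentle derived-discrete algebra with nonsingular Cartan matrix, and Theorem \ref{deriveddiscrete} gives that $T(A)$ is tilting-discrete.

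For $(2)\Rightarrow(3)$, assume $T(A)$ is $\tau$-tilting finite; then $A$ is $\tau$-tilting finite, and since it is known that a cluster-tilted algebra is $\tau$-tilting finite if and only if $Q$ is of Dynkin type (equivalently $A$ is representation-finite \cite{BMR}), the type $Q$ is Dynkin. Suppose $Q$ is of type $\AA$. Then $A$ is gentle and, being nonhereditary, $Q_A$ has at least one oriented $3$-cycle; if it had two or more, $Q_A$ would contain at least two oriented cycles, which by the chain of implications in the proof of Theorem \ref{deriveddiscrete} (positive-definite Cartan matrix $\Rightarrow$ domestic trivial extension $\Rightarrow$ at most one cycle in $Q_A$, by \cite{BS} and \cite{N}) contradicts the positive-definiteness of the Cartan matrix of $A$. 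Hence $A$ has exactly one oriented $3$-cycle, which is $(3)$.

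What remains, and what I expect to be the crux, is the exclusion of the Dynkin types $\mathbb{D}$ and $\mathbb{E}$. Such a nonhereditary cluster-tilted algebra is in general not gentle, so Theorem \ref{deriveddiscrete} does not apply; nor is it enough to check that $A$ has no positive-definite Cartan matrix, since it may well have one — for instance the nonhereditary type-$\mathbb{D}_4$ cluster-tilted algebra (two $3$-cycles glued along an arrow) has $\det C_A=3$ and positive-definite symmetrized Cartan form. My plan is to run through the classification of cluster-tilted algebras of types $\mathbb{D}$ and $\mathbb{E}$ \cite{As, ABS, BMR}: those of type $\mathbb{D}$ arising from a self-folded triangle have singular Cartan matrix and are excluded at once (since $T(A)$ $\tau$-tilting finite forces $\det C_A\neq 0$ by \cite{H}); for the remaining finitely many families one shows directly that $T(A)$ is not $\tau$-tilting finite — for example by exhibiting a $\tau$-tilting infinite quotient of $T(A)$, such as a path algebra of extended Dynkin type or an algebra with a multiple arrow, in the spirit of Examples \ref{TauTFwPDbutSID} and \ref{Am}, or by verifying that the Coxeter matrix of $A$ violates the diagonalizability criterion of Appendix \ref{append:cyclotomic}. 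Because cluster-tilted algebras of a fixed type form finitely many derived-equivalence classes and the trivial extension is a derived invariant, it suffices to treat one representative of each. The type-$\AA$ part above, by contrast, is short once Theorems \ref{sdBGA} and \ref{deriveddiscrete} are available, so the bulk of the work — and the main source of potential error — lies in this case-by-case check over the non-gentle quivers of types $\mathbb{D}$ and $\mathbb{E}$.
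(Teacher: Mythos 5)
Your architecture matches the paper's: the implication $(1)\Rightarrow(2)$ is immediate, the type-$\AA$ case is settled by gentleness \cite{ABCJP} together with Theorem \ref{deriveddiscrete}, and the real content is the exclusion of Dynkin types $\D$ and $\EE$. The problem is that for that last part you only state a plan, not a proof: ``run through the classification and for the remaining finitely many families one shows directly that $T(A)$ is not $\tau$-tilting finite'' is exactly the step that needs to be executed, and you leave open which of several mutually inconsistent strategies (singular Cartan matrix, $\tau$-tilting infinite quotient, Coxeter diagonalizability) would actually close each case. The paper does commit to a mechanism: it passes to quotients $A/AeA$ (again cluster-tilted by \cite{BMR2} and $\tau$-tilting finite), observes that when $e$ is a sink, a source, or sits on a $3$-cycle attached at a single end the positive definiteness of the Cartan form descends to the quotient, and thereby reduces each derived equivalence class from \cite{BHL, BHL2} to a core representative whose Cartan form is checked to be non--positive-definite. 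Without some such reduction-plus-computation your argument for types $\D$ and $\EE$ does not exist yet.

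Moreover, the example you offer to motivate abandoning the Cartan-matrix criterion is incorrect. For the nonhereditary cluster-tilted algebra of type $\D_4$ given by two oriented $3$-cycles sharing the arrow $1\to 2$, the indecomposable projectives have dimension vectors $(1,1,0,0)$, $(1,1,1,1)$, $(1,0,1,0)$, $(1,0,0,1)$, so $\det C=2$ (not $3$) and, taking $x=(1,-1,0,0)$, one finds $x^{\tr}Cx=0$: the symmetrized Cartan form is degenerate, hence not positive definite, and this algebra is already excluded by \cite[Proposition 3.2]{H}. So the positive-definiteness obstruction is stronger than you fear, and this is precisely why the paper's strategy of reducing to cores and computing Cartan forms succeeds. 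Your $(3)\Rightarrow(1)$ and the type-$\AA$ half of $(2)\Rightarrow(3)$ are fine as written.
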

\begin{proof}
As above, put $A:=R\ltimes\Ext_R^2(DR, R)$ for a tilted algebra $R$ of type $Q$.
Assume that the trivial extension of $A$ is $\tau$-tilting finite.
Then, $A$, and hence $R$, are $\tau$-tilting finite as well, which implies that $Q$ is of Dynkin type; so, $A$ is representatin-finite \cite{Z}.

Let us check that $Q$ is neither of type $\D$ nor $\EE$.
To discuss inductively, we consider the quotient $A/AeA$ of $A$ by a primitive idempotent $e$, which is also cluster-tilted \cite{BMR2} and $\tau$-tilting finite.
Note that $A$ has a positive definite Cartan matrix, but $A/AeA$ might not.
Let $P$ and $S$ be the indecomposable projective and simple modules corresponding to $e$, respectively.
If $S$ appears at most only in the socle of $A/P$ (e.g., $e$ is a sink), then the Cartan matrix of $A/AeA$ is positive definite.
Thus, we may suppose that $A$ does not have a sink, a source, or an oriented 3-cycle at an end; i.e., $Q$ is not of the form
\[\xymatrix{
 \ar@/^1pc/@{--}[dr]     &        & \bullet \ar[dl] & \\
Q'  & \bullet \ar[rr] &            & \bullet \ar[ul]   
}\]

Applying these inductive steps, we can easily calculate their Cartan matrices;
thanks to the derived equivalence classifications for the cluster-tilted algebras of Dynkin type $\D$ and $\EE$ \cite{BHL, BHL2}.
Then, it turns out that they have non-positive definite Cartan matrices.

Finally, we remark that the cluster-tilted algebras of Dynkin type $\AA$ are gentle \cite{ABCJP}; see \cite{BV} for their derived equivalence classification.
So, we are done by Theorem \ref{deriveddiscrete}.
\end{proof}

\if0
\old{
trivial extensionが$\tau$-tilting finite or tilting-discreteとなる例がもっとほしい.

\begin{question}
Let $A$ be a selfinjective algebra.
Then the trivial extension of $A$ is $\tau$-tilting finite (resp. silting-discrete) if and only if so is $A$ and it has a positive definite Cartan matrix.
\end{question}

\begin{question}
Let $A$ be an FCY algebra of finite global dimension.
Then the following are equivalent???
\begin{enumerate}
\item The trivial extension of $A$ is tilting-discrete;
\item It is $\tau$-tilting finite;
\item $A$ is what...
\end{enumerate}
\end{question}
}
\fi

\if0
\old{蛇足
The algebra as in Example \ref{TauTFwPDbutSID} was not the desired one (even though it is $\tau$-tilting finite and has a positive definite Cartan matrix), but let us construct an algebra we desire with the same Cartan matrix.

\begin{example}
Let $A$ be the algebra presented by the quiver with relation $xy=0$:
\[\xymatrix{
1 \ar[r]^x\ar[d] & 2 \ar[d]^y \\
3 \ar[r] & 4
}\]
Observe that this algebra has the same Cartan matrix as the algebra of Example \ref{TauTFwPDbutSID}.
Moreover, the algebra is gentle one-cycle not satisfying the clock condition, and so derived-discrete; indeed, it is derived equivalent to the algebra given by $\xymatrix{2 \ar[r] & 4 \ar[r] & 3 \ar@<2pt>[r]^b & 1 \ar@<2pt>[l]^a}$ with $ab=0$.
The trivial extension of $A$ is the Brauer graph algebra presented by
\[\xymatrix@R=5mm @C=1.5cm{
& & \circ \ar@{-}[dd]|2 \\
\circ \ar@{-}[r]|3 & \circ \ar@{-}[ur]|1\ar@{-}[dr]|4 & \\
& & \circ
}\]
Thus, $A$ admits the tilting-discrete trivial extension algebra.
\end{example}
}
\fi

All the cases we have seen above deduce that $A$ is representation-finite and silting-discrete when the trivial extension of $A$ is tilting-discrete.
We pose the following question.

\begin{question}
If the trivial extension of $A$ is tilting-discrete, then is $A$ silting-discrete?
Conversely, is the trivial extension of a silting-discrete algebra with positive definite Cartan matrix tilting-discrete (at least, $\tau$-tilting finite)?
\end{question}

We remark that the $\tau$-tilting finiteness of the trivial extension of an algebra $A$ does not necessarily yields the silting-discreteness of $A$; see Example \ref{Am} and Proposition \ref{AmSID}.

\section{An upper bound of Cartan invariants}

In this section, we study a relationship beween the tilting-discreteness of symmetric algebras and upper bounds of Cartan invariants, which was first given in \cite{EJR}.

Let $\Lambda$ be a symmetric algebra and $C$ its Cartan matrix.
Denote by $\II$ the set of the isomorphism classes of indecomposable partial tilting complexes of $\Lambda$ and take the map
$\delta: \II\to K_0(\Kb(\proj\Lambda))\to\Z\ (X\mapsto [X]^\tr\cdot C\cdot [X])$; note that $\delta(X)=\dim\End_{\Kb(\proj\Lambda)}(X)$.
We remark that the supremum of $\delta(\II)$ is a derived invariant; the Cartan matrices of derived equivalent algebras are $\Z$-congruent by the $g$-matrix of a tilting complex.

For an integer $z$, we set $\RR_z:=\{v\in\Z^n\ |\ v^\tr\cdot C\cdot v=z \}$.

We recall the following result, which is a slight generalization of \cite[Theorem 13]{EJR}.

\begin{theorem}\label{upperbound}
Assume that $\Lambda$ is symmetric and has a positive definite Cartan matrix.
If there is an upper bounds of $\delta(\II)$, then $\Lambda$ is tilting-discrete.
In particular, if there exist only finitely many derived equivalent algebras to $\Lambda$ (up to Morita equivalence), then $\Lambda$ has a positive definite Cartan matrix if and only if $\Lambda$ is tilting-discrete.
\end{theorem}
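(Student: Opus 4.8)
The strategy is to combine the Eisele--Janssens--Raedschelders criterion \cite[Theorem 13]{EJR} with a finiteness argument on $g$-matrices coming from the positive-definiteness of $C$. Recall from \cite{Ai} that a symmetric algebra is tilting-discrete precisely when it is tilting-connected and admits only finitely many tilting complexes of each length. Since $\Lambda$ has a positive definite Cartan matrix, a tilting complex $T$ of $\Lambda$ is in particular a silting complex; moreover, by symmetry it is both a tilting and a co-tilting object, so by \cite{AI} it suffices to control the $g$-matrices $\G_T$: by Lemma \ref{gmatrices} the columns $[T_j]$ of $\G_T$ satisfy $[T_j]^\tr C\,[T_j]=\delta(T_j)$, where each $T_j$ is an indecomposable partial tilting complex. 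Assuming an upper bound $\delta(\II)\le N$, each column $[T_j]$ lies in the union $\bigcup_{z=1}^N\RR_z$.

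The key point is that because $C$ is positive definite, the set $\RR_z=\{v\in\Z^n \mid v^\tr C\, v=z\}$ is the intersection of a bounded ellipsoid with the lattice $\Z^n$, hence is \emph{finite} for each $z$. Therefore $\bigcup_{z=1}^N\RR_z$ is a finite set of lattice vectors, and consequently there are only finitely many matrices that can arise as $g$-matrices of tilting complexes of $\Lambda$. Since a tilting complex is determined up to isomorphism by its $g$-matrix (for silting complexes over any algebra, $T\simeq T'$ iff $\G_T=\G_{T'}$, see \cite{AI}), this gives only finitely many tilting complexes in total, in particular of each length. Combined with tilting-connectedness — which follows from \cite{EJR} for symmetric algebras with positive definite Cartan matrix, or alternatively is exactly what \cite[Theorem 13]{EJR} delivers under the bounded-$\delta$ hypothesis — we conclude that $\Lambda$ is tilting-discrete. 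This proves the first assertion.

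For the second assertion, suppose there are only finitely many algebras derived equivalent to $\Lambda$ up to Morita equivalence. The forward direction ($\Lambda$ tilting-discrete $\Rightarrow$ positive definite Cartan matrix) is already covered by the implication $(2)\Rightarrow(3)$ pattern used for Brauer graph algebras: tilting-discreteness implies $\tau$-tilting finiteness \cite{Ai}, which by \cite[Proposition 3.2]{H} forces positive definiteness of $C$. For the converse, assume $\Lambda$ has positive definite Cartan matrix. Every indecomposable partial tilting complex $X$ is a summand of some tilting complex $T$, and the endomorphism algebra $\Gamma:=\End_{\Kb(\proj\Lambda)}(T)$ is one of the finitely many algebras derived equivalent to $\Lambda$; the value $\delta(X)$ is then a diagonal entry of the Cartan matrix of $\Gamma$. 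As $\Gamma$ ranges over a finite list, these diagonal entries are bounded by some $N$, so $\delta(\II)\le N$. Applying the first assertion, $\Lambda$ is tilting-discrete.

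\textbf{Main obstacle.} The substantive input is really \cite[Theorem 13]{EJR}: the passage from ``finitely many $g$-matrices'' to ``tilting-discrete'' needs tilting-connectedness, which is not automatic and must be extracted from the Eisele--Janssens--Raedschelders argument (they show that under a bound on Cartan invariants every silting complex is reachable from $\Lambda$ by mutation). The ellipsoid-lattice finiteness is elementary. A minor technical care is needed to confirm that the bound on $\delta$ over \emph{all} indecomposable partial tilting complexes — not merely direct summands of two-term complexes — is what the finitely-many-derived-equivalent-algebras hypothesis yields; but since every such $X$ completes to a full tilting complex $T$ and then $\delta(X)$ is literally a Cartan invariant of $\End(T)$, this is immediate from the derived-invariance remark preceding the theorem.
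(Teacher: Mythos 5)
Your handling of the second assertion is fine: the bound on $\delta(\II)$ does follow from finiteness of the derived equivalence class exactly as you say, since each $\delta(X)$ is a diagonal Cartan entry of $\End_{\Kb(\proj\Lambda)}(T)$ for any tilting complex $T$ containing $X$. The first (and main) assertion, however, has a genuine gap. Your chain is: the columns of every $g$-matrix lie in the finite set $\bigcup_{z=1}^{N}\RR_z$ (correct, by positive definiteness of $C$); hence there are finitely many $g$-matrices (correct); hence finitely many tilting complexes, because ``$T\simeq T'$ iff $\G_T=\G_{T'}$'' (false). That rigidity statement is \cite[Theorem 6.5]{DIJ} and holds only for \emph{2-term} silting complexes; it is not a result of \cite{AI} about arbitrary silting complexes. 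For general tilting complexes it fails outright: $\Lambda$ and $\Lambda[2]$ have the same $g$-matrix (the identity) but are not isomorphic, which also shows that your intermediate conclusion ``only finitely many tilting complexes in total'' cannot be correct. Your other load-bearing step, tilting-connectedness, is outsourced to \cite[Theorem 13]{EJR}; since the theorem to be proved is a generalization of that very result, this is close to circular and in any case leaves the new content unestablished.

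The paper's proof sidesteps both problems by pivoting to a different criterion, quoted in the introduction from \cite{Ai}: a symmetric algebra is tilting-discrete if and only if the endomorphism algebra of \emph{every} tilting complex over it is $\tau$-tilting finite. With that criterion one only ever needs the 2-term rigidity of \cite[Theorem 6.5]{DIJ}, applied over each $\Gamma=\End_{\Kb(\proj\Lambda)}(T)$: for an indecomposable 2-term pretilting complex $U$ of $\Gamma$, Lemma \ref{gmatrices} gives $[GU]=\G_T\cdot[U]$ in $K_0(\Kb(\proj\Lambda))$, and $GU$ is an indecomposable partial tilting complex of $\Lambda$, so $[GU]$ lies in the finite set $\bigcup_{z=1}^{d}\RR_z$; invertibility of $\G_T$ then leaves only finitely many possible $g$-vectors $[U]$, and \cite[Theorem 6.5]{DIJ} turns this into finitely many $U$, so each $\Gamma$ is $\tau$-tilting finite and $\Lambda$ is tilting-discrete. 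To repair your argument you would need either this reduction or an independent proof that $g$-matrices separate tilting complexes within each interval $[\Lambda,\Lambda[n]]$ --- the latter is not available, so the reduction to 2-term complexes over all derived equivalent algebras is the missing idea.
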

\begin{proof}
By assumption, note that $\RR_z$ is a finite set for any $z$; it is empty for every $z\leq0$. 
Put $d:=\sup\delta(\II)<\infty$.
%
Let $T$ be a tilting complex of $\Lambda$ and $G: \Kb(\proj\Gamma)\to\Kb(\proj\Lambda)$ an induced derived equivalence, where $\Gamma:=\End_{\Kb(\proj\Lambda)}(T)$.

Suppose that $T=\Lambda$.
As above, the set of vectors $[X]$ for indecomposable 2-term pretilting complexes $X$ is finite.
Thanks to \cite[Theorem 6.5]{DIJ}, it is seen that there exist only finitely many indecomposable 2-term tilting complexes, whence $\Lambda$ is $\tau$-tilting finite.

Let us consider a general case.
Take an indecomposable 2-term pretilting complex $U$ of $\Gamma$.
Then, we get in $K_0(\Kb(\proj\Lambda))$, $[GU]=\G_T\cdot [U]$, where $[U]$ is the $g$-vecter of $U$ in $K_0(\Kb(\proj\Gamma))$.
By assumption, one obtains that $[GU]$ belongs to some $\RR_z$ for $1\leq z\leq d$, and so when $U$ runs all such complexes, the vectors appear only finitely many numbers.
If $[GU]=[GV]$ for another such a complex $V$ of $\Gamma$, then we have $\G_T\cdot [U]=\G_T\cdot [V]$; hence, $[U]=[V]$ in $K_0(\Kb(\proj\Gamma))$ because $\G_T$ is invertible.
So, we get an isomorphism $U\simeq V$ by \cite[Theorem 6.5]{DIJ}, which implies that $\Gamma$ is $\tau$-tilting finite.
Thus, $\Lambda$ is tilting-discrete.
%
\end{proof}

\begin{remark}
The assumption of the upper bound as in Theorem \ref{upperbound} can be restricted, instead of $\II$, to the set of indecomposable direct summands of iterated (irreducible left) mutations starting from $\Lambda$; see \cite[Theorem 2.4]{AM}. 
\end{remark}

Thanks to \cite[Proposition 3.2]{H}, we immediately obtain the following corollary.

\begin{corollary}\label{upperbound2}
A $\tau$-tilting finite symmetric algebra with $\sup\delta(\II)<\infty$ is tilting-discrete.
\end{corollary}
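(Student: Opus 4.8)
\textbf{Proof proposal for Corollary \ref{upperbound2}.}
The plan is to reduce the statement to Theorem \ref{upperbound} by checking that its hypotheses are met. Theorem \ref{upperbound} requires two things: that $\Lambda$ has a positive definite Cartan matrix, and that $\sup\delta(\II)<\infty$. The second is given by assumption, so the whole task is to produce positive definiteness of the Cartan matrix of $\Lambda$ from the $\tau$-tilting finiteness of $\Lambda$.

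For this, I would invoke \cite[Proposition 3.2]{H}, exactly as done in the proof of Theorem \ref{sdBGA} (the implication $(2)\Rightarrow(3)$ there). Concretely: a symmetric algebra $\Lambda$ is, in particular, the trivial extension of itself? — no; rather, the point is that $\tau$-tilting finiteness of a symmetric algebra already forces the homological quadratic form associated to its Cartan matrix to be positive definite. Indeed \cite[Proposition 3.2]{H} says that if an algebra is $\tau$-tilting finite and, say, arises as (or is) a suitable selfinjective/symmetric algebra, then its Cartan matrix is invertible and the Euler form is positive definite; this is precisely the ``positive definite Cartan matrix'' condition introduced in the excerpt. So $\tau$-tilting finiteness of the symmetric algebra $\Lambda$ yields that $C$ is positive definite.

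Having established that $\Lambda$ is symmetric with positive definite Cartan matrix and $\sup\delta(\II)<\infty$, Theorem \ref{upperbound} applies verbatim and concludes that $\Lambda$ is tilting-discrete. Hence a $\tau$-tilting finite symmetric algebra with $\sup\delta(\II)<\infty$ is tilting-discrete, as claimed.

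The only subtle point — and the step I would be most careful about — is the precise form of \cite[Proposition 3.2]{H}: one must make sure it genuinely gives positive \emph{definiteness} (not merely nonsingularity) of the Cartan form directly from $\tau$-tilting finiteness of a symmetric (or selfinjective) algebra, without passing through the trivial extension construction. Since the same citation is used for the implication $(2)\Rightarrow(3)$ in the proof of Theorem \ref{sdBGA} for Brauer graph algebras, and those are symmetric, this usage is consistent; I would simply cite it in the same way. Everything else is immediate from Theorem \ref{upperbound}.
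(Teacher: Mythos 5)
Your proposal is correct and follows the paper's own route exactly: the paper deduces Corollary \ref{upperbound2} from Theorem \ref{upperbound} by citing \cite[Proposition 3.2]{H} to get positive definiteness of the Cartan matrix from $\tau$-tilting finiteness of the symmetric algebra, precisely as in the implication $(2)\Rightarrow(3)$ of Theorem \ref{sdBGA}. Your caution about the exact formulation of \cite[Proposition 3.2]{H} is reasonable, but the usage you settle on is the one the paper itself relies on.
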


We know that, under the assumption of the derived equivalence class of a symmetric algebra being finite, the conditions (1)--(3) as in Theorem \ref{sdBGA} are equivalent.
%
However, we have no idea if a tilting-discrete symmetric algebra satisfies the assumption.
More weakly, it is also unknown whether or not, the tilting-discreteness brings about an upper bound of $\delta(\II)$; that is, does the converse of Corollary \ref{upperbound2} hold?

We answer to a question in \cite{R}, which asks the derived equivalence class containing the following algebra $\Lambda$.

\begin{example}
Let $\Lambda$ be the algebra given by the quiver $\xymatrix{1 \ar@<2pt>[r]^a\ar@(lu, ld)_x & 2 \ar@<2pt>[l]^a\ar@(ru, rd)^{x}}$ with relations $x^3=0=a^3$ and $xa=ax$.
If $K$ has characteristic 3, then this algebra is isomorphic to the group algebra of $(C_3\times C_3)\rtimes C_2$; the action of $C_2$ on $C_3\times C_3$ switches the generator of $C_3$ to each other, and then the group is isomorphic to $S_3\times C_3$.

The Cartan matrix of $\Lambda$ is $\begin{pmatrix} 6 & 3 \\ 3 & 6 \end{pmatrix}$, positive definite.
We see that $\Lambda$ is isomorphic to $\overline{\Lambda}\otimes_KK[x]/(x^3)$, 
where $\overline{\Lambda}$ is presented by the quiver $\xymatrix{1 \ar@<2pt>[r]^a & 2 \ar@<2pt>[l]^a}$ with $a^3=0$.
One obtains from \cite[Theorem 2.1]{AH} that the functor $F:=-\otimes_KK[x]/(x^3)$ gives rise to isomorphisms $\tilt\overline{\Lambda}\xrightarrow{\sim}\tilt\Lambda$ and $\End_{\Kb(\proj\Lambda)}(FT)\simeq F\End_{\Kb(\proj\overline{\Lambda})}(T)$ of posets and algebras, respectively.
Since $\overline{\Lambda}$ is the Brauer tree algebra of line with 2 edges, it turns out that the derived equivalence class consists of only itself; hence, so is $\Lambda$.

Note that if the characteristic of $K$ is 3, then $\Lambda$ is actually isomorphic to $KS_3\otimes KC_3$, but our discussion is independent of the characteristic.
\end{example}

Let us observe an example of $\tau$-tilting infinite symmetric algebras with positive definite Cartan matrix \cite{R}; Theorem \ref{upperbound} leads to the fact that such an algebra admits infinitely many derived equivalent algebras (up to Morita equivalence). 

\begin{example}\label{C3C3:C2}
Let $\Lambda$ be the algebra presented by the quiver $\xymatrix{1 \ar@<5pt>[r]\ar@<2pt>[r] & 2 \ar@<2pt>[l]\ar@<5pt>[l]}$ with certain relations; we omit but illustrate them by the Loewy structure, and also put its Cartan matrix and the quadratic form:
\[\begin{array}{c@{\hspace{2cm}}c@{\hspace{2cm}}c}
\vcenter{\xymatrix@C=5mm @R=5mm{
&& \circ \ar@{-}[dl]\ar@{-}[dr] && \\
& \bullet \ar@{-}[dl]\ar@{-}[dr] && \bullet \ar@{-}[dl]\ar@{-}[dr] & \\
\circ \ar@{-}[dr] && \circ \ar@{-}[dl]\ar@{-}[dr] && \circ \ar@{-}[dl] \\
& \bullet \ar@{-}[dr] && \bullet \ar@{-}[dl] & \\
&&\circ && 
}} &
{\begin{pmatrix} 5 & 4 \\ 4 & 5 \end{pmatrix}} &
5x^2+8xy+5y^2
\end{array}
\]
If the characteristic of $K$ is 3, then this is obtained by the group algebra of $(C_3\times C_3)\rtimes C_2$; the action of $C_2$ on $C_3\times C_3$ sends each element to its inverse.

Observe that $\Lambda$ is $\tau$-tilting infinite and has a positive definite Cartan matrix.
For each integer $a\geq0$, we have an indecomposable 2-term pretilting complex $X_a:=[P_1^a\to P_2^{a+1}]$; this appears as direct summands of iterated irreducible left mutations starting from $\Lambda$ with respect to the first and the second summands, alternately.
Thus, we get $\delta(X_a)=2a^2+2a+5$; so, there is no upper bound of $\delta(\II)$.
Furthermore, $\Lambda$ admits infinitely many derived equivalent algebras.
\end{example}

Note that, while Rickard showed this fact in \cite{R} by calculating iterated (irreducible) tilting mutations of the algebra with respect to the same vertex, our constructions are given by iterated (irreducible) tilting mutations of the algebra at the two vertices alterately.

The trivial extension of the algebra $A$ as in Example \ref{TauTFwPDbutSID} is also the case similar to Example \ref{C3C3:C2}; we leave it to the reader as an exercise (\cite[Theorem 2.3]{R} is useful).

Thanks to \cite{E}, we give a reduction technique for the silting quiver of a certain algebra.

\begin{proposition}\label{reductiontoBGAofdigon}
Fix $\ell\geq2$.
Let $m\geq1$ and $\Lambda_m$ be the algebra presented by the following quiver with relations:
\[\begin{array}{c@{\hspace{5mm}:\hspace{5mm}}c}
\vcenter{\xymatrix{1 \ar@<7pt>[r]^x\ar@<3pt>[r]|y & 2 \ar@<3pt>[l]|x\ar@<7pt>[l]^y}} &
x^{2m}=0=y^\ell\ \mbox{and}\ yx=xy.
\end{array}
\]
Then we have an isomorphism $\silt\Lambda_m\simeq \silt\Lambda_1$ of posets.
\end{proposition}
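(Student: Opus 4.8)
The plan is to realise $\Lambda_1$ as a central quotient of $\Lambda_m$ and then transport the silting poset across that quotient by means of the reduction technique of \cite{E}; this reduces the silting combinatorics of the whole family $\{\Lambda_m\}_{m\geq1}$ to the single case $m=1$. Write $x_1,x_2$ (resp.~$y_1,y_2$) for the arrows $1\to2$ and $2\to1$ carrying the label $x$ (resp.~$y$), so that $x=x_1+x_2$ and $y=y_1+y_2$, and put $\zeta:=x^2=x_1x_2+x_2x_1$, the sum of a loop at $1$ and a loop at $2$.

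The first step is to verify that $\zeta$ lies in the center of $\Lambda_m$. It plainly commutes with $e_1,e_2$ and with $x$, and it commutes with $y$ by the identities $x_1x_2y_1=x_1(x_2y_1)=x_1(y_2x_1)=(x_1y_2)x_1=(y_1x_2)x_1=y_1x_2x_1$ and, symmetrically, $x_2x_1y_2=y_2x_1x_2$, both of which use only the relation $xy=yx$. Since $e_1,e_2,x,y$ generate $\Lambda_m$ as a $K$-algebra, this gives $\zeta\in Z(\Lambda_m)$.

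Next I would identify the quotient. Writing $\Lambda_m=KQ/(x^{2m},y^\ell,xy-yx)$ for the quiver $Q$ displayed in the statement, the two-sided ideal $(\zeta)$ is generated by $x^2$, and imposing $x^2=0$ already entails $x^{2m}=0$; hence $\Lambda_m/(\zeta)\cong KQ/(x^2,y^\ell,xy-yx)=\Lambda_1$. (A normal-form count $\dim_K\Lambda_m=4m\ell$, via the paths $x^ay^b$ with $0\le a\le 2m-1$ and $0\le b\le\ell-1$ read off in the four idempotent blocks, is a convenient sanity check but is not needed.) Then I would apply \cite{E}: a central element of an algebra induces an isomorphism of the silting posets of the algebra and of its quotient by that element, so specialising to $\zeta$ yields $\silt\Lambda_m\simeq\silt\Lambda_1$ as posets. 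If the cited reduction turns out to be available only for a restricted class of central elements — square-zero ones, say — I would instead iterate: for $m\ge2$ the element $x^{2m-2}$ is central with $(x^{2m-2})^2=x^{4m-4}=0$ in $\Lambda_m$, while $\Lambda_m/(x^{2m-2})\cong\Lambda_{m-1}$, so $\silt\Lambda_m\simeq\silt\Lambda_{m-1}\simeq\cdots\simeq\silt\Lambda_1$.

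The step I expect to require the most care is matching the hypotheses of the reduction result in \cite{E}: which central elements it admits, whether it imposes self-injectivity or symmetry on the algebra — note that $\Lambda_m$ is self-injective but has a nontrivial Nakayama permutation, hence fails to be symmetric, exactly when $\ell$ is odd — and, above all, ensuring that the resulting bijection of silting complexes is an isomorphism of posets rather than merely of underlying sets. Once that is settled, the centrality of $\zeta$ and the identification $\Lambda_m/(\zeta)\cong\Lambda_1$ are the only genuine content, so the remainder of the argument is routine.
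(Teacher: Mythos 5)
Your computations in the first two steps are fine: $x^{2}=x_{1}x_{2}+x_{2}x_{1}$ is indeed central in $\Lambda_m$, and $\Lambda_m/(x^{2})\cong\Lambda_1$. The gap is in the third step: the result you invoke from \cite{E} --- ``a central element of an algebra induces an isomorphism of the silting posets of the algebra and of its quotient by that element'' --- is not a theorem, and neither is your fallback version for square-zero central elements. What is available for a central element in the radical of a \emph{finite-dimensional} algebra is \cite[Theorem 11]{EJR}, which only identifies the $\tau$-tilting (equivalently, 2-term silting) quivers of $\Lambda$ and $\Lambda/(z)$; this paper is careful to use that result only for such 2-term statements elsewhere, precisely because it does not control the full poset $\silt$. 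Eisele's reduction \cite[Corollary 6.4]{E} applies to an \emph{order} over a complete local ring: one needs an (infinite-dimensional) algebra $\Gamma$ that is finitely generated and \emph{free} over $K\dbracket{X}$, with $X$ acting through a central element, and the conclusion compares $\silt$ of $\Gamma$ with $\silt$ of the quotient $\Gamma/X\Gamma$. Your $\Lambda_m$ is finite-dimensional and $x^{2}$ is nilpotent, so $K[x^{2}]$ sits inside $\Lambda_m$ only as the Artinian ring $K[t]/(t^{m})$; the hypotheses of the reduction are simply not met, and no amount of iterating square-zero quotients repairs this.

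The missing idea is to \emph{lift} rather than to quotient: drop the relation $x^{2m}=0$ altogether and pass to the completion $\Gamma:=\widehat{KQ}/\overline{(y^{\ell},\,yx-xy)}$. This $\Gamma$ is infinite-dimensional, the element $z:=x^{2m}$ (the sum of the two closed $x$-paths of length $2m$) is central, and one checks by exhibiting an explicit basis of paths that $\Gamma$ is finitely generated and free as a $K\dbracket{X}$-module when $X$ acts as $z$ --- i.e., $\Gamma$ is a $K\dbracket{X}$-order. Since $\Gamma/X\Gamma\cong\Lambda_m$, Corollary 6.4 of \cite{E} then yields that $\silt\Lambda_m$ is independent of $m$, giving $\silt\Lambda_m\simeq\silt\Lambda_1$. (Equivalently, with $X$ acting as $x^{2}$ one has $\Lambda_m\cong\Gamma/X^{m}\Gamma$, and the same corollary compares the reductions modulo different powers of the uniformizer.) The verification of freeness over $K\dbracket{X}$ is the substantive point your proposal omits; your worry about self-injectivity or symmetry of $\Lambda_m$ is a red herring, as the reduction theorem imposes no such hypothesis.
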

\begin{proof}
For a (finite) quiver $Q$, we denote by $\widehat{KQ}$ the completion of $KQ$ with respect to the ideal generated by the arrow set $Q_1$, and the corresponding completions of ideals are denoted by a horizontal bar.

Now, let $Q$ be our quiver and $I^\flat$ denote the ideal of $KQ$ generated by $y^\ell$ and $yx-xy$.
Put $z:=x^{2m}+x^{2m}$, where the former $x^{2m}$ is the path from 1 to 1 and the latter $x^{2m}$ is the path from 2 to 2.
Note that $z$ is in the center of the (infinite dimensional) $K$-algebra $KQ/I^\flat$ and the $K[z]$-module $KQ/I^\flat$ is finitely generated and free; considering the paths (starting from each vertex)
\[\xymatrix
@!@R=1mm@C=1mm{
& &&& e \ar@{-}[dl]\ar@{-}[dr] &&& & \\
&&  & x \ar@{-}[dl]\ar@{-}[dr] && y \ar@{-}[dl]\ar@{-}[dr] &  & &\\
&& x^2 \ar@{-}[dl]\ar@{-}[dr] && xy \ar@{-}[dl]\ar@{-}[dr] && y^2 \ar@{-}[dl]\ar@{-}[dr] & & \\
& x^3 \ar@{-}[dl]\ar@{-}[dr] & & x^2y \ar@{-}[dl]\ar@{-}[dr] && xy^2 \ar@{-}[dl]\ar@{-}[dr] && y^3\ar@{-}[dl]\ar@{-}[dr] & \\
\cdots && \cdots && \cdots && \cdots && \cdots
}\]
the top right of the line $x^{2m}\cdot-$ forms a generating set.
This shows that $KQ/I^\flat$ is a $K[z]$-order.
Thus, by letting $X$ act as $z$, the (infinite dimensional) K-algebra $\Gamma:=\widehat{KQ}/\overline{I^\flat}$ becomes a $K\dbracket{X}$-order.
Moreover, we have an isomorphism $\Lambda_m\simeq \Gamma/X\Gamma$ of algebras.
Consequently, the assertion follows from \cite[Corollary 6.4]{E}.
\end{proof}

We remark that $\Lambda_m$ is a selfinjective algebra with singular Cartan matrix; in particular, it is symmetric iff $\ell$ is even.
When $\ell=2$, we observe that $\Lambda_1$ is isomorphic to the Brauer graph algebra of digon.
So, the following corollary is immediately obtained.

\begin{corollary}
Keep the same setting as in Proposition \ref{reductiontoBGAofdigon} and assume that $\ell$ is even.
Then $\Lambda_m$ is not tilting-connected. 
\end{corollary}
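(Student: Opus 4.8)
The plan is to derive this corollary as an immediate consequence of Proposition \ref{reductiontoBGAofdigon} together with Theorem \ref{tdcBGA}, so the work amounts to identifying $\Lambda_1$ correctly and transporting non-tilting-connectedness along the poset isomorphism of silting quivers. First I would record that, since $\ell$ is even, the algebra $\Lambda_m$ is symmetric (this is already observed in the paragraph preceding the corollary), so silting theory and tilting theory coincide for $\Lambda_m$ and $\silt \Lambda_m = \tilt \Lambda_m$ as posets; in particular the silting quiver agrees with the tilting quiver. Thus it suffices to show $\Lambda_m$ is not tilting-connected.

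Next I would invoke Proposition \ref{reductiontoBGAofdigon}, which gives an isomorphism $\silt \Lambda_m \simeq \silt \Lambda_1$ of posets. Since mutation of silting complexes is governed entirely by the poset structure (irreducible mutations correspond to arrows of the Hasse/silting quiver), this isomorphism identifies the silting quivers of $\Lambda_m$ and $\Lambda_1$ as quivers, and in particular preserves the number of connected components and the positions of $\Lambda$ and $\Lambda[1]$. Hence $\Lambda_m$ is tilting-connected if and only if $\Lambda_1$ is. So I am reduced to the case $m=1$.

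For $m=1$, the defining quiver and relations specialize to $x^2 = 0 = y^\ell$ and $yx = xy$ on the two-vertex quiver with a doubled arrow each way; I would check by a direct but routine comparison of quiver, relations and dimension that $\Lambda_1$ is isomorphic to the Brauer graph algebra of the digon (the Brauer graph with two vertices and two edges joining them, all multiplicities one — which, being bipartite with two vertices and at least two edges, is exactly case (ii) of Theorem \ref{tdcBGA}). This identification is the one substantive verification in the proof; the subtlety is matching the relation $x^2=0=y^\ell$ with $\ell$ even against the standard presentation of the digon Brauer graph algebra, where the two cycles around the two vertices should have the prescribed nilpotency degrees. Given this, Theorem \ref{tdcBGA} (case (ii)) tells us the Brauer graph algebra of the digon is not tilting-connected, and combining with the reductions above yields that $\Lambda_m$ is not tilting-connected for every $m \geq 1$, completing the proof. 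The only mild obstacle is confirming the digon identification rather than, say, landing on a larger or differently-structured Brauer graph algebra; but the two-vertex bipartite hypothesis is precisely tailored so that even a coarser identification (two vertices, $\geq 2$ edges, bipartite) already suffices for Theorem \ref{tdcBGA}.
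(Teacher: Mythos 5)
Your overall strategy is the one the paper intends: reduce to $m=1$ via Proposition \ref{reductiontoBGAofdigon}, identify $\Lambda_1$ with a Brauer graph algebra, and quote Theorem \ref{tdcBGA}; the transport of disconnectedness along the poset isomorphism is unproblematic, since the silting quiver is the Hasse quiver of the poset. The gap is in the identification step. For $\ell=2$ your claim is exactly what the paper asserts: $\Lambda_1$ is the Brauer graph algebra of the digon. But for even $\ell\geq 4$ the algebra $\Lambda_1$, with relations $x^2=0=y^\ell$ and $xy=yx$, is \emph{not} the digon algebra with multiplicity one (its dimension is $4\ell$, not $8$), and in fact it is not a Brauer graph algebra of any Brauer graph, because it is not special biserial. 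Writing $x,y$ for the arrows $1\to 2$ and $x',y'$ for the arrows $2\to 1$, one has $yx'=xy'\neq 0$ and $yy'=y^2\neq 0$ (as $\ell>2$), so $y$ has two nonzero continuations; and for any change of arrow bases $\alpha=ax+by$, $\beta'=c'x'+d'y'$ one computes $\alpha\beta'\equiv(ad'+bc')xy'+bd'\,yy'$ modulo $\rad^3$, and forcing the two mixed products to vanish leads to degenerate bases. Since every Brauer graph algebra is special biserial, Theorem \ref{tdcBGA} simply does not apply to $\Lambda_1$ when $\ell\geq 4$; your fallback remark that a ``coarser identification'' suffices does not help, because the issue is not which Brauer graph one lands on but whether one lands on a Brauer graph algebra at all.

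The gap is repairable, and this is what the evenness of $\ell$ is really for: when $\ell$ is even, $y^\ell$ is a sum of two cycles (one based at each vertex), so the argument of Proposition \ref{reductiontoBGAofdigon} can be repeated with the roles of $x$ and $y$ interchanged --- take the order $\widehat{KQ}/\overline{(x^2,\,yx-xy)}$ over $K\dbracket{Y}$ with $Y$ acting as the central element $y^\ell+y^\ell$ --- yielding a further poset isomorphism between $\silt\Lambda_1$ (relations $x^2=0=y^\ell$) and the silting poset of the algebra with relations $x^2=0=y^2$, which \emph{is} the digon Brauer graph algebra. Only after this second reduction does Theorem \ref{tdcBGA} finish the proof. (The paper's own justification is equally terse and spells out only the $\ell=2$ identification, so you have reproduced its argument faithfully; but as written your proof, like the paper's remark, covers only $\ell=2$.)
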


Unfortunately, we do not know a way to reduce the algebra as in Example \ref{C3C3:C2} so that the reduction has the same silting quiver; so, one asks if the algebra is tilting-connected.
We only comprehend that the algebra admits the same $\tau$-tilting quiver as the Brauer graph algebra of digon by \cite[Theorem 11]{EJR}.

\if0
\old{
Although the algebra of this example is not tilting-discrete, we can easily obtain a tilting-discrete symmetric algebra with the same Cartan matrix; e.g., the Brauer tree algebra with two edges and with exceptional vertex of multiplicity 4 in the center.
}
\fi

Evidently, we can not drop the assumption of the positive definiteness as in Theorem \ref{upperbound}; in fact, the Brauer graph algebra of digon admits the derived equivalence class consisting of only itself and is not tilting-discrete; that is, $\sup\delta(\II)=2$ but there are infinitely many indecomposable (2-term) pretilting complexes $X$ with $\delta(X)=2$.

Moreover, even if the derived equivalence class of a given symmetric algebra is finite up to Morita equivalence, all the conditions as in Theorem \ref{sdBGA} are not necessarily equivalent;
the regularity of the Catan matrix does not always indicate the $\tau$-tilting finiteness.

\begin{theorem}\label{Kronecker}
Let $\Lambda$ be the trivial extension of the $\ell$-Kronecker algebra for $\ell\geq1$.
Then the Morita and derived equivalence classes of $\Lambda$ coincide.
Furthermore, we have:
\begin{enumerate}
\item If $\ell=1$, then $\Lambda$ is tilting-discrete;
\item If $\ell=2$, then $\Lambda$ is not tilting-connected and has a singular Cartan matrix;
\item Otherwise, $\Lambda$ is not tilting-connected and has a regular (indefinite) Cartan matrix.
\end{enumerate}
\end{theorem}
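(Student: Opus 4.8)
The plan is to treat the three cases by combining a Morita/derived rigidity statement with the eigenvalue obstruction already developed for Brauer graph algebras. First I would establish that the Morita and derived equivalence classes of $\Lambda$ coincide. The $\ell$-Kronecker algebra $A_1$ (in the notation of Example \ref{Am}, this is $A_m$ with $m=1$) has global dimension one, so by a Happel-type argument its trivial extension $\Lambda$ is the repetitive-algebra quotient; more directly, $\Lambda$ is presented by the quiver $\xymatrix{1 \ar@{=>}@<3pt>[r]^y & 2 \ar@{=>}@<3pt>[l]^z}$ with the zero relations $z_iy_j=0$ ($i\ne j$), $z_iy_i=z_jy_j$ and $y_iz_j=0$ ($i\ne j$), $y_iz_i=y_jz_j$ — the $m=1$ specialisation of the presentation of $B_m$'s trivial extension in Example \ref{Am}. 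To show the derived equivalence class is just $\{\Lambda\}$, I would argue that any irreducible tilting mutation of $\Lambda$ is again isomorphic to $\Lambda$: by the Notation-Remark we may work with silting complexes, and a direct computation of $\mu_{P_i}^-(\Lambda)$ for $i=1,2$ — using that the relevant projective presentations are symmetric in the two vertices — shows the endomorphism algebra is again $\Lambda$. Since $\Lambda$ is $\tau$-tilting finite exactly when $\ell=1$ (again by the center argument of Example \ref{Am}: $\Lambda/(\text{center})$ is representation-finite iff $\ell=1$), for $\ell=1$ tilting-discreteness is automatic and I would cite \cite[Theorem 5.2]{Ai} together with representation-finiteness of the repetitive algebra of the path algebra $1\rightrightarrows 2$ of type $\widetilde{\AA}_1$ only when $\ell=2$ — wait, that is representation-infinite, so instead for $\ell=1$ I use that $\Lambda$ is the trivial extension of the $A_2$ path algebra, which is representation-finite symmetric, hence tilting-discrete by \cite[Theorem 5.2]{Ai}. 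This settles (1).

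For (2) and (3), I would first pin down the Cartan matrix. The Cartan matrix of the trivial extension of an algebra $A$ with Cartan matrix $C_A$ is $C_A + C_A^{\tr} - D$ in the appropriate normalisation, or more simply, reading it off the quiver with relations above: each $P_i$ of $\Lambda$ has a uniserial-type Loewy structure and one computes $C=\begin{pmatrix} 2 & \ell \\ \ell & 2\end{pmatrix}$. Then $\det C = 4-\ell^2$, which is nonzero for $\ell\ne 2$ and zero for $\ell=2$; and for $\ell\ge 3$ the matrix is indefinite (its determinant is negative), while for $\ell=2$ it is singular with a one-dimensional radical. This gives the Cartan-matrix claims in (2) and (3) directly.

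The key remaining point — and the one I expect to be the main obstacle — is the \emph{tilting-disconnectedness} for $\ell\ge 2$. Here I would invoke the eigenvalue obstruction of Theorem \ref{tdcBGA}, but $\Lambda$ is not literally a Brauer graph algebra, so Lemma \ref{itm} does not apply verbatim. The strategy is: compute $\mu_{P_i}^-(\Lambda)$ explicitly (as in the $\ell=1$ case but now with $\ell$ arrows) and observe that its $g$-matrix, acting on row vectors from the right, fixes the all-ones vector $(1,1)$ — this is because the mutated summand $P_i\to P_j$ (here $j$ is the unique other vertex, with multiplicity $\ell$) contributes a column $[P_j]-[P_i]$... actually one must be careful: with $\ell$ parallel arrows the term is $P_i \to P_j^{\ell}$, so the relevant $g$-vector is $\ell[P_j]-[P_i]$ and the all-ones vector is generally \emph{not} fixed. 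So instead the correct invariant to track is a different eigenvector, or — more robustly — I would use Lemma \ref{gmatrices} together with the fact, established in the first paragraph, that the derived equivalence class is a single point: every tilting complex $U$ in $\C_\Lambda$ has $\G_U = \G_T\cdot\G_{FU}$ where $FU$ is a tilting complex of $\Lambda$ itself, and by induction on mutation length every $\G_T$ for $T\in\C_\Lambda$ lies in the subgroup of $GL_2(\Z)$ generated by the $g$-matrices of the (finitely many, up to the self-equivalence) irreducible mutations of $\Lambda$. One then checks by a determinant or trace computation that none of these matrices, nor any product of them, equals $\pm I$ times the shift matrix realising $\Lambda[1]$ — concretely, the $g$-matrices of the two irreducible left mutations are $\begin{pmatrix} -1 & 0 \\ \ell & 1\end{pmatrix}$ and $\begin{pmatrix} 1 & \ell \\ 0 & -1\end{pmatrix}$, whose entries off the diagonal are divisible by $\ell\ge 2$, a property preserved under multiplication and not shared by $-I = \G_{\Lambda[1]}$. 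Hence $\Lambda[1]$ is unreachable from $\Lambda$ by iterated irreducible mutation, so $\Lambda$ is not tilting-connected, proving the remaining assertions of (2) and (3). The delicate part is verifying that \emph{all} $g$-matrices in $\C_\Lambda$ genuinely lie in this congruence subgroup — i.e. that the inductive step survives the right-mutations and the possible appearance of non-left mutations — for which I would appeal to \cite[Theorem 2.4]{AM} to reduce to iterated irreducible \emph{left} mutations starting from $\Lambda$.
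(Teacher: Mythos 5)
Your overall route for parts (2) and (3) — compute the Cartan matrix $\begin{pmatrix}2&\ell\\ \ell&2\end{pmatrix}$, compute the $g$-matrices $\begin{pmatrix}-1&0\\ \ell&1\end{pmatrix}$ and $\begin{pmatrix}1&\ell\\0&-1\end{pmatrix}$ of the two irreducible left mutations, and show via Lemma \ref{gmatrices} that no product of these can be the $g$-matrix of $\Lambda[1]$ — is exactly the paper's argument. But there are three genuine gaps. First, your proof of the opening assertion (Morita class $=$ derived class) is circular: you argue that every irreducible mutation of $\Lambda$ has endomorphism algebra $\Lambda$ again, but since the theorem itself asserts that $\Lambda$ is \emph{not} tilting-connected for $\ell\geq2$, there are tilting complexes unreachable by iterated mutation, and their endomorphism algebras are not controlled by that computation. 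The paper instead cites \cite{PX}. (The mutation computation \emph{does} suffice for the disconnectedness argument, since there one only needs to control the component $\C_\Lambda$, but it does not prove the first assertion.)

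Second, your concrete obstruction fails: ``off-diagonal entries divisible by $\ell$'' is indeed preserved under multiplication, but it is \emph{shared} by $-E$, whose off-diagonal entries are $0$. Reducing mod $\ell$, the two generators become $\mathrm{diag}(-1,1)$ and $\mathrm{diag}(1,-1)$, whose product is $-E$ mod $\ell$, so no congruence argument of this shape can work. One needs a finer analysis of the group $\langle\G_T,\G_U\rangle$: both generators are involutions of determinant $-1$, their product has trace $\ell^2-2>2$ for $\ell\geq3$ (and is unipotent $\neq E$ for $\ell=2$), so the group is infinite dihedral; $-E$ is central of determinant $+1$ and hence would have to be a nontrivial power of the hyperbolic (resp.\ unipotent) element $\G_T\G_U$, which is impossible by its eigenvalues. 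Third, because the identification of the mutated endomorphism algebra with $\Lambda$ may permute the two vertices, the $g$-matrix realising $\Lambda[1]$ along a mutation path could also be $\begin{pmatrix}0&-1\\-1&0\end{pmatrix}$; the paper explicitly excludes this matrix from the group as well, and your proposal does not address it.
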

\begin{proof}
The first assertion follows from \cite{PX}.
It remains only to show the case (3).
Labelling the vertices as 1 and 2, we observe that the Cartan matrix of $\Lambda$ is $\begin{pmatrix} 2 & \ell \\ \ell & 2 \end{pmatrix}$, regular and indefinite.
Let $T$ and $U$ be the left mutations of $\Lambda$ with respect to $P_1$ and $P_2$, respectively.
Then, we calculate their $g$-matrices $\G_T=\begin{pmatrix} -1 & 0 \\ \ell & 1\end{pmatrix}$ and $\G_U=\begin{pmatrix} 1 & \ell \\ 0 & -1\end{pmatrix}$.
Since the derived equivalence class consists of only $\Lambda$ (and the vertices are not distinguish), by Lemma \ref{gmatrices} we have a surjective map from $\C_\Lambda$ to the group generated by $\G_T$ and $\G_U$ which sends $X$ to $\G_X$.
It is not difficult to see that neither $-E$ nor $\begin{pmatrix} 0 & -1 \\ -1 & 0\end{pmatrix}$ is a member of the group.
Therefore, we find out that the 1-shift of $\Lambda$ does not belong to $\C_\Lambda$; so done.
\end{proof}
\if0
\begin{example}\label{regularTID}
Let $\Lambda$ be the trivial extension algebra of the path algebra given by the quiver $\xymatrix{1 \ar@<4pt>[r]\ar@<0pt>[r]\ar@<-4pt>[r] & 2}$.
Observe that the Cartan matrix of $\Lambda$ is $\begin{pmatrix} 2 & 3 \\ 3 & 2\end{pmatrix}$, regular and indefinite.
It follows from \cite{PX} that the derived equivalence class consists of only $\Lambda$ itself; this implies that $\sup\delta(\II)=2$.
However, $\Lambda$ is neither $\tau$-tilting finite nor tilting-discrete.

We mention that $\Lambda$ is not tilting-connected, either.
Let $T$ and $U$ be the irreducible left mutations of $\Lambda$ with respect to $P_1$ and $P_2$, respectively.
Then, we calculate their $g$-matrices $\G_T=\begin{pmatrix} -1 & 0 \\ 3 & 1\end{pmatrix}$ and $\G_U=\begin{pmatrix} 1 & 3 \\ 0 & -1\end{pmatrix}$.
Since the derived equivalence class consists of only $\Lambda$ (and the vertices are not distinguish), by Lemma \ref{gmatrices} we have a surjective map from $\C_\Lambda$ to the group generated by $\G_T$ and $\G_U$ which sends $X$ to $\G_X$.
It is not difficult to see that neither $-E$ nor $\begin{pmatrix} 0 & -1 \\ -1 & 0\end{pmatrix}$ is not a member of the group.
Consequently, we find out that the 1-shift of $\Lambda$ does not belong to $\C_\Lambda$, whence $\Lambda$ is tilting-disconnected.
\end{example}
\fi

Let us revisit Example \ref{Am} to observe the supremum of $\delta(\II)$; note that the class of the algebras $A_m$ contains the generalized Kronecker algebras ($m=1$).

\begin{example}\label{singularandnoupperbound}
Let $A_m$ be the algebra defined in Example \ref{Am} and $\Lambda$ its trivial extension.
Then the Cartan matrix of $\Lambda$ is $\begin{pmatrix} 2m & \ell \\ \ell & 2\end{pmatrix}$; this can be positive definite, indefinite or positive semidefinite (singular).

As $\Lambda$ owns the same $\tau$-tilting quiver with the trivial extension of the $\ell$-Kronecker algebra by \cite[Theorem 11]{EJR}, we can obtain indecomposable 2-term pretilting complexes $X_t$ with $g$-vector $\begin{pmatrix} a_{t+1} \\ -a_t \end{pmatrix}$, where
$\alpha_{\pm}=\frac{\ell\pm\sqrt{\ell^2-4}}{2}$
and $a_t=\frac{\alpha_+^{t-1}-\alpha_-^{t-1}}{\alpha_+-\alpha_-}$ if $\ell>2$ and $a_t=t-1$ if $\ell=2$.
So, we have
\[\delta(X_t) =
\begin{cases}
\ 2\cdot\frac{(m-1)(\alpha_+^{2t}+\alpha_-^{2t}-2)+(\ell^2-4)}{\ell^2-4} & (\ell>2) \\
\ 2\cdot \{(m-1)\cdot t^2+1\} & (\ell=2)
\end{cases}\ \xrightarrow{t\to\infty}\ \infty\ \mbox{(if $m\neq1$)}.
\]
Consequently, one gets $\sup\delta(\II)=\infty$ if $m, \ell\geq2$.
Note that this fact is proved by using alternate (irreducible) tilting mutations.

Let $m\geq2$ and $\ell=1$. As is seen in Example \ref{Am}, the values of $\delta(X)$ do not diverge to infinity by altenate (irreducible) tilting mutations from $\Lambda$.
On the other hand, we observe by \cite[Theorem 2.3]{R} that they grow infinitely by using iterated irreducible left mutations at the second summand.
Indeed, the $t$th-times irreducible left mutation has the form
\[\bigoplus\left\{
\begin{array}{c}
 \xymatrix@R=3mm{
 &&&& P_1 \\
 P_2 \ar[r]_y & P_1 \ar[r]_{\left(\begin{smallmatrix} x \\ a \end{smallmatrix}\right)} & {P_1}^2 \ar[r] & \cdots \ar[r] & {P_1}^t
 }
\end{array}
\right.\]
We also obtain that iterated irreducible left mutations with respect to the first summand admits $\Lambda$ as the endomorphism algebras.
\end{example}

\if0
Let us observe an example of symmetric algebras with positive definite Cartan matrix and $\sup\delta(\II)=\infty$ which are \old{not tilting-connected}.

\begin{example}[Example \ref{TauTFvsPDC}]\label{s}
Let $A$ be the algebra given by the quiver $\xymatrix{1 \ar@<4pt>[r]^x\ar[r]|y & 2 \ar@<4pt>[l]^z}$ with the relations $xz=0=yz$.
We consider the trivial extension $\Lambda$ of $A$, which has Cartan matrix $\begin{pmatrix} 2 & 3 \\ 3 & 6\end{pmatrix}$, positive definite.
By \cite[Theorem 2.3]{R}, it is seen that the iterated irreducible left mutations of $\Lambda$ with respect to the first summand yield $\sup\delta(\II)=\infty$; indeed, the $\ell$th-time has the form:
\[\bigoplus\left\{
\begin{array}{c}
{\xymatrix@R=2mm{
P_1 \ar[r] & P_2 \ar[r] & \cdots \ar[r] & \hbox to 0cm {${P_2}^{2^{\ell-1}}$}\hphantom{P_2} 
\\
&&& P_2
}}
\end{array}
\right.\]

Let $T$ be the left mutation of $\Lambda$ with respect to $P_2$, which is of the form
\[
\bigoplus\left\{
\begin{array}{c}
{\xymatrix@R=2mm{
0 \ar[r] & P_1 \\
P_2 \ar[r] & {P_1}^3
}}
\end{array}
\right.
\]
Its endomorphism algebra is just isomorphic to $\Lambda$; by a direct calculation, or this also follows from the fact that the derived equivalence induced by $T$ sends $P_1/\soc P_1$ (this is isomprhic to $S_1[1]$ in the stable module category of $\Lambda$) and $S_2[1]$ to $S_1$ and $S_2$, respectively.

\old{Now, we draw the tilting quiver of $\Lambda$ locating $\Lambda$ in the center:
\[\xymatrix{
\ar[r] & \ar[d]\ar[r] & \ar[r] & \\
\ar[r] & \Lambda \ar[r]\ar[d] & \mu_{P_1}^-(\Lambda) \ar[r] & \\
\ar[r] & T \ar[r] & \ar[r] &
}\]
Here, the horizontal and the vertival arrows stand for the irreducible left mutations with respect to the first and the second summands, respectively.
(We omit the vertival arrows except around $\Lambda$.)
We can also check that the endomorphism algebra of $\mu_{P_1}^-(\Lambda)$ is isomorphic to $\Lambda^\op$, where $(-)^\op$ denotes the opposite algebra.
Furthermore, we observe that tensoring a two-sided tilting complex corresponding to $T$ sends each tilting complex to one step down.
This implies that the tilting complexes in the same vertival line admit an isomorphic endomorphism algebra.}
\end{example}
\fi

We summarize the results and examples; one will also deal with certain trivial extension algebras in the subsection \ref{Am2} (Proposition \ref{TEofAm2}).

\[\def\arraystretch{2}
\begin{array}{c||c|c|c}
 & \mbox{Positive Definite} & \mbox{Regular} & \mbox{Singular} \\\hline\hline
\sup\delta(\II)<\infty &
\vcenter{\hbox{${
 \displaystyle\mathop{
  \ref{upperbound},\ \ref{TEofAm2}(m\leq3)
  }_{\mbox{(TD)}}
}$}}  &
\vcenter{\hbox{${
 \displaystyle\mathop{
  \ref{Kronecker}{\rm (3)}\ \mbox{(ID)}
 }_{\mbox{(not TC)}}
}$}}
&
\vcenter{\hbox{$
 \displaystyle\mathop{
  \mbox{\ref{tdcBGA} ($\supseteq$ \ref{Kronecker}{\rm (2)})}
  }_{
     \mbox{(not TC)}
    }
$}}
\\\hline
\multirow{2}{*}{$\sup\delta(\II)=\infty$}
&
\vcenter{\hbox{$
 {\displaystyle
  \mathop{
    \ref{singularandnoupperbound}
  }_{\mbox{(TC or not?)}}
 }
$}}
&
\vcenter{\hbox{$
 {\displaystyle
  \mathop{
   \ref{singularandnoupperbound}\ (\mbox{ID})
  }_{\mbox{(TC or not?)}}
 }
$}}
&
\vcenter{\hbox{$
 {\displaystyle
  \mathop{
   \ref{singularandnoupperbound}
  }_{\mbox{(TC or not?)}}
 }
$}} \\\cline{2-4}
&
\vcenter{\hbox{$
 {\displaystyle
  \mathop{
    \ref{C3C3:C2}
  }_{\mbox{(TC or not?)}}
 }
$}}
&
\vcenter{\hbox{$
 {\displaystyle
  \mathop{
   \ref{TEofAm2}(m\geq5, \mbox{ID})
  }_{\mbox{(not TC)}}
 }
$}}   
&
\vcenter{\hbox{$
 {\displaystyle
  \mathop{
   \ref{TEofAm2}(m=4)
  }_{\mbox{(not TC)}}
 }
$}}   
\end{array}\]
Here, 
ID $=$ InDefinite, 
TD $=$ Tilting-Discrete, and TC $=$ Tilting-Connected.


%

\if0
\old{
\begin{proposition}\label{gsmc}
Let $\Lambda$ be the trivial extension of an algebra $A$.
Assume that the graded stable module category $\smod^\Z\Lambda$ has a silting object; \old{e.g., when $A$ is Iwanaga--Gorenstein}.
If $\smod^\Z \Lambda$ is silting-discrete, then so is $A$. 
\end{proposition}
\begin{proof}
Splicing the usual fully faithful functor $\Kb(\proj A)\to\Db(\mod A)$ and the Happel's one $\Db(\mod A)\to \smod^\Z \Lambda$ \cite{Ha2}, we regard $\Kb(\proj A)$ as a full subcategory of $\smod^\Z\Lambda$.
Then, the assertion follows from \cite[Theorem 2.10]{AH2}.
\end{proof}

We pose a question.

\begin{question}\label{SDvsTauTF3}
If the trivial extension $\Lambda$ of an algebra $A$ is $\tau$-tilting finite, then \old{does $\smod^\Z\Lambda$ admits a silting object ほとんど起きない} and is it silting-discrete?
\end{question}

\begin{question}
Let $A$ be an algebra and $\Lambda$ its trivial extension.
Then does it hold that $\smod\widehat{A}$ has a silting object and is silting-discrete if and only if $\Lambda$ is $\tau$-tilting finite?
Here, $\widehat{A}$ stands for the repetitive algebra.
\[\xymatrix{
& \smod^\Z\Lambda \ar@{<->}[d]^\simeq & \\
\mod\widehat{A} \ar[r] & \smod\widehat{A} & \Db(\mod A) \ar@{_(->}[l]
}\]
\begin{itemize}
\item $\Lambda$ is $\tau$-tilting finite if and only if $\widehat{A}$ is locally $(\langle\nu\rangle, \tau)$-tilting finite by PPW.
\end{itemize}
\end{question}
}
\fi

\section{Examples of nonsymmetric algebras}

\subsection{}\label{ENSA1}
Let $A$ be the algebra as in Example \ref{TauTFwPDbutSID}:
\[\begin{array}{c@{\hspace{2cm}}c}
\vcenter{\xymatrix{
1 \ar[r]\ar[d]\ar[dr] & 2 \ar[d] \\
3 \ar[r] & 4
}}& \rad^2A=0
\end{array}\]
Recall that it is representation-infinite, $\tau$-tilting finite and has positive definite Cartan matrix; furthermore, it is of cyclotomic type.

We show that $A$ is not silting-discrete.
It is obtained that $A$ can be represented by the (nonpositive) dg path algebra of the graded quiver (with a certain differential) \cite{O}:
\[\xymatrix{
1 \ar[r]\ar[d]\ar[dr]\ar@{-->}@<4pt>[dr]\ar@{-->}@<-4pt>[dr] & 2 \ar[d] \\
3 \ar[r] & 4
}\]
Here, the dashed arrows are of degree $-1$.
Thus, the factor algebra $A/A(e_2+e_3)A$ is isomorphic to the dg path algebra $\B$ of the graded 3-Kronecker quiver $\xymatrix{1 \ar[r]\ar@{-->}@<4pt>[r]\ar@{-->}@<-4pt>[r] & 4}$, which is not silting-discrete by \cite[Proposition 2.25]{AH2}.
Thus, we derive from \cite[Corollary 2.18(1)]{AH2} that $A$ is not silting-discrete.


As above, we know that there exists a homological epimorphism from $A$ to $\B$.
Therefore, one has a recollement $(\D(\B), \D(A), \D(K^2))$ of the derived categories.
Moreover, one gets an isomorphism $\silt \B\simeq \silt_PA$ of posets,
where $\silt_PA$ stands for the subset of $\silt A$ consisting of silting complexes which has $P:=P_2\oplus P_3$ as a direct summand; see \cite{AH2}.

Let $B$ be the algebra as in Example \ref{TauTFvsPDC}: 
(we relabel the vertex 2 with 4)
\[\begin{array}{c@{\hspace{2cm}}c}
\vcenter{{\xymatrix{
1 \ar@<4pt>[r]^x\ar[r]|y & 4 \ar@<4pt>[l]^z
}}} & xz=0=yz
\end{array}\]
Taking the irreducible right mutation of $B$ with respect to $P_4$, we get the silting complex
\[T:=\bigoplus\left\{
\begin{array}{c}
{\xymatrix@R=1mm{
P_1 & \\
P_1 \ar[r]^z & P_4
}}
\end{array}
\right.\]
and one sees that its dg endomorhpism algebra is isomorphic to $\B$; see also \cite{LY}.
Thus, there exist isomorphisms and an injection of posets
\[\silt B \simeq \silt \B \simeq \silt_PA \hookrightarrow  \silt A.\]

This fact can be also obtained as follows; we avoid the dg algebra $\B$.
Let $U$ be the left mutation of $A$ with respect to $P_4$:
\[U:=\bigoplus\left\{
\begin{array}{c}
{\xymatrix@R=1mm{
 & P_1 \\
 & P_2 \\
 & P_3 \\
P_4 \ar[r] & P_1\oplus P_2 \oplus P_3
}}
\end{array}
\right.\]
This is a tilting complex; indeed, it is an APR tilting module.
We observe that the endomorphism algebra $A'$ of $U$ is presented by the following quiver with certain relations:
\[\xymatrix{
1 \ar@<4pt>[dr]\ar[dr] & 2 \\
3 & 4 \ar@<4pt>[ul] \ar[u]\ar[l]
}\]
One sees that the factor algebra $A'/A'(e_2+e_3)A'$ is isomorphic to $B$, and obtains isomorphisms and an injection of posets
\[\silt B\simeq \silt_{P'}A'\simeq \silt_PA\hookrightarrow \silt A.\]
We remark that this phenomenon comes from the following commutative diagram between derived categories:
\[\xymatrix@C=2cm{
\D(A) \ar[r]^{\mbox{{\tiny left mut.}}}_\sim & \D(A') \\
\D(\B) \ar[r]_{\mbox{{\tiny left mut.}}}^\sim \ar@{^{(}->}[u] & \D(B) \ar@{^{(}->}[u]
}\]

\subsection{}\label{subsec:Am}
Let us delve into the algebra $A_m$ defined in Example \ref{Am}:
\[\begin{array}{c@{\hspace{2cm}}c}
\vcenter{\xymatrix{
1 \ar@(lu,ld)_x\ar@{=>}[r]^y & 2
}} & x^m=0=xy
\end{array}\]
This is representation-finite if $\ell=1$, and otherwise it is representation-wild
\cite{BG, HM}.

We can reduce the power $m$ of $x$ so that the reduction preserves the silting quiver; the author really appreciates Florian Eisele informing of this fact.

\begin{proposition}
We have an isomorphism $\silt A_{m+1}\simeq \silt A_2$ of posets.
\end{proposition}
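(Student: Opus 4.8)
The plan is to mimic the proof of Proposition~\ref{reductiontoBGAofdigon}, realizing both $A_{m+1}$ and $A_2$ as quotients of a single infinite-dimensional algebra by a central element, and then invoking the order-reduction theorem \cite[Corollary 6.4]{E}. First I would write $Q=Q_m$ and let $I^\flat\subseteq KQ$ be the ideal generated by $xy$ (no power relation on $x$). Then $KQ/I^\flat$ is the infinite-dimensional algebra having $K[x]$ as a direct summand of $e_1(KQ/I^\flat)e_1$ and the $\ell$ arrows $y$ landing in $P_2$ with $xy=0$; concretely its underlying vector space is spanned by the paths $e_1, x, x^2,\dots$ together with $e_2$ and the $\ell$ arrows $y_1,\dots,y_\ell$ (every path through a $y$ is killed except the $y$ itself). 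The key point is to produce a central element $z$ so that $KQ/I^\flat$ becomes a module-finite free $K[z]$-algebra (a $K[z]$-order) with $KQ/I^\flat \otimes_{K[z]} K[z]/(z) \cong A_{2}$ --- or more precisely so that both $A_{m+1}$ and $A_2$ arise as $\Gamma/X\Gamma$ for the completed order $\Gamma$ and a suitable generator $X$, after which \cite[Corollary 6.4]{E} gives $\silt A_{m+1}\simeq\silt A_2$.

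The natural candidate is $z:=x^{m}$ (the path from $1$ to $1$), which is central in $KQ/I^\flat$ because $x^m\cdot y = x^{m-1}(xy)=0 = y\cdot x^m$ (the latter is $0$ since there is no arrow out of $2$), and it commutes with $x$. Then $KQ/I^\flat$ is free over $K[z]$: a $K[z]$-basis is $\{e_1,x,\dots,x^{m-1}\}\cup\{e_2, y_1,\dots,y_\ell\}$, since every $x^{j}$ with $j\geq m$ is $z\cdot x^{j-m}$ and $z$ annihilates the $e_2$-side. Completing at the arrow ideal gives $\Gamma:=\widehat{KQ}/\overline{I^\flat}$, a $K\dbracket{X}$-order via $X\mapsto z$, and $\Gamma/X\Gamma\cong KQ/(xy, x^m)=A_m$. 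The subtlety is that this produces $A_m$, not $A_{m+1}$ versus $A_2$: to get the claimed isomorphism I instead take $z:=x^{m}$ inside the algebra $KQ/(xy)$ and observe $\Gamma/X\Gamma\cong A_m$, while a \emph{different} choice of order presentation —using $z=x$ itself as the central uniformizer of the order $KQ/(xy, x^{m+1})$ localized appropriately— must be set up so that reducing modulo $z$ collapses $A_{m+1}$ down to $A_m$; iterating $m+1\leadsto m\leadsto\cdots\leadsto 2$ then yields $\silt A_{m+1}\simeq\silt A_2$ for all $m\geq1$. Thus the cleanest formulation is: show $\silt A_{m+1}\simeq\silt A_m$ for every $m\geq2$ by taking $\Gamma$ the completion of $KQ/(xy, x^{m+1})$ viewed as a $K\dbracket{x}$-order (free over $K\dbracket{x}$ on $\{e_1,e_2,y_1,\dots,y_\ell\}$ since $x\cdot e_1$-side is the maximal ideal and $x$ kills the $e_2$-side), with $\Gamma/x\Gamma\cong KQ/(xy,x,\dots)$—here one checks $\Gamma/x^{?}$ hits both $A_{m+1}$ and $A_m$ as specializations, invoking \cite[Corollary 6.4]{E}.

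I expect the main obstacle to be verifying the precise hypotheses of \cite[Corollary 6.4]{E}: namely that $\Gamma$ is genuinely an order over the chosen power-series ring (module-finiteness and freeness, which requires carefully exhibiting the basis given that the relation $xy=0$ makes the two idempotent-components behave asymmetrically under multiplication by the uniformizer), and that the two finite-dimensional algebras $A_{m+1}$ and $A_2$ really do appear as the required reductions rather than merely being related in a weaker sense. A secondary technical point is bookkeeping the $\ell$ parallel arrows $y_1,\dots,y_\ell$ consistently throughout (they play no essential role since $xy_i=0$ for each $i$, but the centrality computation and the basis count must account for all of them). Once the order structure is in place, the poset isomorphism is a black-box application of Eisele's reduction, and a short induction on $m$ finishes the proof; everything else is routine.
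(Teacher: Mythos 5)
Your overall strategy---realize the algebras as reductions of a single $K\dbracket{X}$-order and apply \cite[Corollary 6.4]{E}---is exactly the paper's, but your concrete construction fails at the decisive point, and the fallbacks you sketch do not repair it. The obstruction is the one you flag but never resolve: in $KQ_m/(xy)$ there are no nontrivial paths from vertex $2$ to itself, so \emph{any} central element lying in the arrow ideal (in particular $z=x^m$, and likewise $z=x$) annihilates $e_2$ and $y_1,\dots,y_\ell$. These elements therefore generate a torsion $K[z]$-submodule, your claimed basis $\{e_1,x,\dots,x^{m-1}\}\cup\{e_2,y_1,\dots,y_\ell\}$ is not a free basis, the completion is not a $K\dbracket{X}$-order, and Eisele's theorem does not apply. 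A quick sanity check shows it cannot apply: your recipe treats all $m\geq 1$ uniformly with the same ambient ring $\widehat{KQ}/\overline{(xy)}$, so it would give $\silt A_1\simeq\silt A_2$, which the paper explicitly notes is false (the two silting quivers have different face structures). The final ``cleanest formulation'' is not coherent either: $KQ/(xy,x^{m+1})$ is the finite-dimensional algebra $A_{m+1}$ itself and cannot be free over $K\dbracket{x}$, and \cite[Corollary 6.4]{E} compares an order with its reduction modulo the uniformizer, not with reductions modulo various powers, so ``$\Gamma/x^{?}$ hits both $A_{m+1}$ and $A_m$'' is not something the theorem supports.

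The missing idea is to \emph{enlarge the quiver} so that a central element can act freely on both idempotent components. The paper adjoins loops $z_1$ at vertex $1$ and $z_2$ at vertex $2$, sets $\Gamma:=\widehat{KQ}/\overline{(xy,\ z_1y-yz_2,\ xz_1,\ z_1x)}$ --- one ring, independent of $m$ --- and takes $X:=z_1+z_2-x^m$. The relation $z_1y=yz_2$ makes $X$ central, and $X$ now acts as $z_2$ on the $e_2$-side and as $z_1-x^m$ on the $e_1$-side, so $\Gamma$ is free over $K\dbracket{X}$ with basis $e_1,x,\dots,x^m,y_1,\dots,y_\ell,e_2$. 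Reducing modulo $X$ forces $z_1=x^m$ and $z_2=0$, hence $x^{m+1}=0$, giving $\Gamma/X\Gamma\simeq A_{m+1}$; since $\silt\Gamma$ does not see the choice of $m$ hidden in $X$, Eisele's corollary yields $\silt A_{m+1}\simeq\silt\Gamma\simeq\silt A_2$ in one step, with no induction. Without some such enlargement your approach cannot be completed.
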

\begin{proof}
Let $Q$ be the quiver $\xymatrix{1 \ar@(ur, ul)_x\ar@{=>}[r]^y \ar@(dr, dl)^{z_1}& 2 \ar@(dl,dr)_{z_2}}$ and put $\Gamma:=\widehat{KQ}/\overline{(xy, z_1y-yz_2, xz_1, z_1x)}$.
We set $X:=z_1+z_2-x^m$, which is in the center of $\Gamma$.
Furthermore, it is seen that $\Gamma$ is both free and finitely generated as a $K\dbracket{X}$-module; in fact, we have 
\[\Gamma=K\dbracket{X}e_1\oplus K\dbracket{X}x\oplus\cdots\oplus  K\dbracket{X}x^m\oplus K\dbracket{X}y_1\oplus\cdots \oplus K\dbracket{X}y_\ell\oplus K\dbracket{X}e_2.\]
Here, we distinguish the $\ell$ arrows $y$ by $y_i$'s.
So, $\Gamma$ is a $K\dbracket{X}$-order.

Now, we obtain isomorphisms
\[\Gamma/X\Gamma \simeq \widehat{KQ}/\overline{(xy, z_1y-yz_2, xz_1, z_1x, z_1-x^m, z_2)}
\simeq \widehat{KQ^\circ}/\overline{(xy, x^{m+1})} \simeq A_{m+1}.
\]
Here, $Q^\circ$ denotes the original quiver.
By \cite[Corollary 6.4]{E}, we have $\silt A_{m+1}\simeq \silt A_2$.
\end{proof}

Note that the silting quivers of the algebras $A_m$ for $m=1, 2$ are completely different.
For instance, let $\ell=1$.
Then, the silting quiver of $A_1$ consists of tetragons and pentagons, but that of $A_2$ admits hexagons.
Furthermore, we have the following observation.

\begin{proposition}\label{AmSID}
The algebra $A_m$ is silting-discrete if and only if $\ell=1=m$.
\end{proposition}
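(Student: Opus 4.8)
The plan is to prove the two implications separately, using the reduction results already in hand. For the ``if'' direction, suppose $\ell=1=m$; then $A_1$ is the $1$-Kronecker algebra, i.e. the path algebra of $\xymatrix{1\ar[r]&2}$, which is representation-finite and hereditary, hence silting-discrete (indeed its silting quiver is finite, being a derived-discrete algebra of Dynkin type $\AA_2$). So this direction is immediate.

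For the ``only if'' direction I would split into two cases. First, if $\ell\geq2$, then by the previous remark $A_m$ is representation-wild, and in particular $\tau$-tilting infinite; since $\tau$-tilting finiteness is a necessary condition for silting-discreteness, $A_m$ is not silting-discrete. (Alternatively, one can invoke Example~\ref{Am} and the fact that silting-discreteness of $A_m$ would force $\tau$-tilting finiteness of $A_m$, hence $\ell=1$.) Second, suppose $\ell=1$ but $m\geq2$. Here I would appeal to the reduction $\silt A_{m+1}\simeq\silt A_2$ of posets (the proposition immediately preceding this one), which reduces everything to showing $A_2$ is \emph{not} silting-discrete; equivalently one shows $A_m$ is not silting-discrete for $m=2$. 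The cleanest route is to exhibit an infinite chain of irreducible left mutations starting from $A_m$ at the second summand whose endomorphism-algebra invariants (e.g. the values $\delta$, or the dimensions of the terms) grow without bound, so that infinitely many silting complexes of a fixed length occur in the connected component $\C_{A_m}$. In fact Example~\ref{singularandnoupperbound} already records, via \cite[Theorem 2.3]{R}, that for $m\geq2$ (and $\ell=1$) the iterated irreducible left mutations at the second summand have the shape
\[\bigoplus\left\{
\begin{array}{c}
 \xymatrix@R=3mm{
 &&&& P_1 \\
 P_2 \ar[r]_y & P_1 \ar[r] & {P_1}^2 \ar[r] & \cdots \ar[r] & {P_1}^t
 }
\end{array}
\right.\]
and that these are pairwise non-isomorphic silting complexes. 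Since they all have length $\le t$ bounded term-by-term only by a growing function — more to the point, there are infinitely many of them and they are of $3$-term-bounded length in the sense that they live in $\thick A_m$ with uniformly bounded ``width'' — one concludes $A_m$ has infinitely many silting complexes in a single connected component, hence is not silting-discrete (it is not even ``silting finite in each length'' in the appropriate sense).

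To make the last step rigorous I would argue as follows: silting-discreteness of $A_m$ would imply, by \cite[Theorem 2.4]{AM} or the analogous silting statement, that between any irreducible left mutation $T$ and $T[1]$ there are only finitely many silting complexes; but the complexes displayed above all satisfy $A_m\ge \mu^-_{P_2}(A_m)\ge (\text{the }t\text{-th one})\ge \cdots$, forming an infinite strictly decreasing chain in a bounded interval of the silting poset, a contradiction. Alternatively, and perhaps more transparently, one notes these complexes are all $2$-term with respect to a suitable silting complex (the first mutation), so $\tau$-tilting finiteness of the corresponding endomorphism algebra fails; since silting-discreteness forces all such endomorphism algebras to be $\tau$-tilting finite, we again get a contradiction. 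The main obstacle I anticipate is purely bookkeeping: verifying that the displayed complexes really are the iterated irreducible left mutations (that each arrow in the mutation triangle is as claimed and the complex remains silting), and checking they are pairwise non-isomorphic — but both of these are routine given \cite[Theorem 2.3]{R} and the explicit description of $A_m$, so no genuinely new idea is needed beyond the reduction already established.
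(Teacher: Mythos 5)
Your ``if'' direction and the reduction of the ``only if'' direction to the single case $\ell=1$, $m=2$ (via the poset isomorphism $\silt A_{m+1}\simeq\silt A_2$, and disposing of $\ell\geq2$ separately) agree with the paper's strategy. One small caveat there: ``representation-wild, and in particular $\tau$-tilting infinite'' is not a valid implication in general; what you actually need is that $A_m/(x)$ is the $\ell$-Kronecker algebra, which is $\tau$-tilting infinite for $\ell\geq2$, so $A_m$ is too.

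The genuine gap is in the remaining case $\ell=1$, $m=2$. First, the complexes you import from Example \ref{singularandnoupperbound} are complexes over the \emph{trivial extension} $\Lambda$ of $A_m$, not over $A_m$ itself (the differential $\bigl(\begin{smallmatrix} x\\ a\end{smallmatrix}\bigr)$ uses the extra arrow $a$ of $\Lambda$), so they say nothing directly about $\silt A_m$. Second, and more seriously, an infinite descending chain of iterated irreducible left mutations does not contradict silting-discreteness: the $t$-th term of such a chain has roughly $t$ nonzero degrees, so these objects do \emph{not} all lie in a fixed interval $[T,T[1]]$, nor are they all $2$-term over the first mutation, as your alternative argument asserts. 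Every silting-discrete algebra with a nontrivial silting poset — any Dynkin path algebra, say — already admits such infinite strictly descending mutation chains, so ``infinitely many silting complexes in $\C_{A_m}$'' is not an obstruction. To disprove silting-discreteness you must exhibit infinitely many silting objects inside a \emph{single} interval $[T,T[1]]$ for some reachable $T$, equivalently a reachable silting complex whose endomorphism algebra is $\tau$-tilting infinite. This is precisely what the paper does: it replaces $A_2$ by the derived-equivalent Nakayama algebra $B_2$ (quiver $1\rightleftarrows 2$ with $zyz=0$), computes an explicit four-fold iterated irreducible left mutation $T$ (which is silting but not tilting), and verifies by direct calculation that $\End_{\Kb(\proj B_2)}(T)$ has a double arrow in its Gabriel quiver and is therefore $\tau$-tilting infinite. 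Your proposal never identifies such a $T$, so the contradiction you aim for does not materialize; this concrete computation is the missing ingredient, not mere bookkeeping.
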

\begin{proof}
The `if' part is trivial. 
For the `only if' part, we only need to show that $A:=A_2$ for $\ell=1$ is not silting-discrete.
To make calculations easy, let us consider the algebra $B:=B_2$ for $\ell=1$ as in Example \ref{Am}, which is derived equivalent to $A$ and is a Nakayama algebra; recall that $B$ is as follows
\[\begin{array}{c@{\hspace{2cm}}c}
\vcenter{\xymatrix{
1 \ar@<2pt>[r]^z & 2 \ar@<2pt>[l]^y
}} & 
zyz=0
\end{array}\]
Now, we take the silting complex $T$ of $B$ got by four-times iterated irreducible left mutation from $B$ (the second once and then, the first thrice):
\[T:=\bigoplus\left\{
\begin{array}{c}
{\xymatrix@R=2mm{
P_2 \ar[r]^{yz} & P_2 \ar[r]^{\left(\begin{smallmatrix} z \\ 0 \end{smallmatrix}\right)} & {\begin{matrix} P_1 \\ P_2 \end{matrix}}\ar[r]^{\left(\begin{smallmatrix} zy & z \end{smallmatrix}\right)} & P_1 \\
 & & P_2 \ar[r]_z & P_1
}}
\end{array}
\right.\]
Note that $T$ is not a tilting complex.
By a direct calculation, we obtain that the (ordinary) endomorphism algebra of $T$ admits a multiple arrow; more presiesly, it is presented by the quiver $\xymatrix{1 \ar@<4pt>[r]^a & 2 \ar[l]|b\ar@<4pt>[l]^c}$
with relations $ca=0=aba=bab=bac$.
Thus, it turns out that $B$ is not $\tau$-tilting finite, whence $A$ is not silting-discrete.
\end{proof}

This fact seems a bit surprising. Actually, the algebra presented by the same quiver as that of $A_m$ for $\ell=1$ with $x^2=0$ (i.e., the monomial $xy$ does not vanish) is silting-discrete.
Furthermore, we obtain the following observation.

\begin{proposition}\label{Am2}
Let $m\geq1$ and $A$ be the algebra presented by the quiver with relation:
\[\begin{array}{c@{\hspace{2cm}}c}
\vcenter{\xymatrix{
1 \ar@(lu,ld)_x\ar[r]^y & 2
}} & x^m=0
\end{array}\]
Then the following are equivalent:
\begin{enumerate}
\item $A$ is silting-discrete;
\item It is $\tau$-tilting finite;
\item It is representation-finite;
\item $m\leq3$.
\end{enumerate}
\end{proposition}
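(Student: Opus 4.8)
The plan is to prove the cycle $(1)\Rightarrow(2)\Rightarrow(4)\Rightarrow(3)\Rightarrow(1)$; since $(3)\Rightarrow(2)$ is trivial, this gives all equivalences. The implication $(1)\Rightarrow(2)$ is general: specialising the definition of silting-discreteness to the initial silting complex $A$ and to length $n=1$ bounds the number of two-term silting complexes, and these correspond bijectively to support $\tau$-tilting $A$-modules. The case $m=1$ is immediate throughout, since $A_1^\circ\cong KA_2$. So the real content splits into the positive range $m\le 3$ (where I must produce representation-finiteness and silting-discreteness) and the negative range $m\ge 4$ (where I must produce $\tau$-tilting infiniteness).

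\smallskip\noindent\emph{$(4)\Rightarrow(3)$, representation-finiteness for $m\le 3$.} For $m=2$ the algebra $A_2^\circ$ is gentle, hence special biserial, with only finitely many strings and no band, so it is representation-finite. For $m=3$ I would pass to the Galois covering: the one-loop quiver has fundamental group $\Z$ (generated by $x$), and $A_m^\circ=\widetilde A/\Z$ for the locally bounded $K$-category $\widetilde A$ with objects $1_i,2_i$ $(i\in\Z)$, arrows $x_i\colon 1_i\to 1_{i+1}$ and $y_i\colon 1_i\to 2_i$, and relations saying that every $m$ consecutive $x_i$'s compose to zero. For $m=3$ one checks that $\widetilde A$ is locally representation-finite: each finite convex subcategory is a disjoint union of tilted algebras of Dynkin type --- the largest relation-free piece, on $\{1_i,1_{i+1},1_{i+2},2_i,2_{i+1},2_{i+2}\}$, is a path algebra of type $\mathbb E_6$, and enlarging it forces in the relation $x_{i+2}x_{i+1}x_i=0$, which keeps the remaining pieces of finite type --- and this can be cross-checked against the fact that the relevant ``subspace problems'' over $K[x]/(x^3)$-modules all have strictly negative expected moduli dimension. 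Representation-finiteness of $A_3^\circ$ then follows by covering theory; alternatively, since $A_3^\circ$ is a $7$-dimensional algebra with two simples, one may just compute its finite Auslander--Reiten quiver by hand.

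\smallskip\noindent\emph{$(2)\Rightarrow(4)$, $\tau$-tilting infiniteness for $m\ge 4$.} Since $x^m\in(x^4)$, the algebra $A_4^\circ$ is a quotient of $A_m^\circ$, and $\tau$-tilting finiteness passes to quotients, so it suffices to show $A_4^\circ$ is $\tau$-tilting infinite; by \cite{DIJ} this is equivalent to brick-infiniteness. In the module-theoretic description, $A_4^\circ$-modules are essentially pairs $(M,U)$ with $M$ a $K[x]/(x^4)$-module and $U\subseteq M$ a subspace, and taking $M=J_4\oplus J_4$ with $U$ running through a suitable one-parameter family of subspaces yields infinitely many pairwise non-isomorphic bricks. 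The same phenomenon is visible on the covering side: for $m=4$ the convex subcategory of $\widetilde A$ on $\{1_i,1_{i+1},1_{i+2},1_{i+3},2_{i+1},2_{i+2}\}$ carries no relation (the longest $x$-path there has length $3$) and is a path algebra of extended Dynkin type $\widetilde{\mathbb D}_5$, so $\widetilde A$ is locally representation-infinite.

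\smallskip\noindent\emph{$(3)\Rightarrow(1)$, silting-discreteness for $m\le 3$ --- the main obstacle.} For $m=2$: $A_2^\circ$ is a gentle one-cycle algebra with nonsingular Cartan matrix, hence not piecewise hereditary of type $\widetilde{\mathbb A}$, hence it does not satisfy the clock condition, hence it is derived-discrete by \cite{V} and therefore silting-discrete. The case $m=3$ is genuinely delicate, and I expect it to be the hard part: $A_3^\circ$ has infinite global dimension (the $2$-dimensional indecomposable module $M$ with top $S_1$ and socle $S_2$ satisfies $\Omega^2 M\cong M$), and its Cartan determinant is $3$; since derived equivalence preserves the Cartan determinant and the two-simple gentle one-cycle algebras have Cartan determinant at most $2$, the algebra $A_3^\circ$ is \emph{not} derived-discrete, so the argument used for $m=2$ is unavailable. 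I would instead either (a) directly control the silting quiver of $A_3^\circ$ --- a finite check for this small algebra, bounding the endomorphism algebras of the silting complexes obtained by iterated irreducible mutation --- or (b) exhibit a derived equivalence from $A_3^\circ$ to a representation-finite self-injective algebra, the natural candidate being the two-simple self-injective Nakayama algebra of Loewy length $3$ (which has Cartan determinant $3$), after which silting-discreteness follows since it is a derived invariant; a reduction to a smaller algebra in the style of \cite{E}, as used elsewhere in this paper, would serve equally well. Closing the cycle then yields all the equivalences.
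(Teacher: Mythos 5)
Your cycle of implications is sound and most of what you write is correct, but your route differs from the paper's in both substantive directions. For $(2)\Rightarrow(4)$ the paper constructs no bricks: it computes that the irreducible left mutation of $A$ at $P_2$ has endomorphism algebra $A^{\op}$ and that mutating back at the first summand returns to $A$, so the alternating mutation sequence starting from $A$ has $g$-matrices $\G^s$ and $\G^s\left(\begin{smallmatrix}1&1\\0&-1\end{smallmatrix}\right)$ with $\G=\left(\begin{smallmatrix}m-1&1\\-m&-1\end{smallmatrix}\right)$; for $m\geq4$ this sequence can never produce the $g$-matrix of $A[1]$, which contradicts $\tau$-tilting finiteness. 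For $(3)\Rightarrow(1)$ at $m=3$, your option (a) is exactly the paper's argument: every iterated irreducible left mutation of $A$ has endomorphism algebra $A$, $A^{\op}$ or a semisimple algebra, hence $\tau$-tilting finite, hence $A$ is silting-discrete. For $(4)\Leftrightarrow(3)$ the paper simply cites the classification of two-point algebras in [BG, HM] rather than running the covering argument.

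Two points need attention. First, your brick construction for $A_4^\circ$ is asserted rather than proved: the claim that a generic $4$-dimensional subspace $U\subseteq J_4\oplus J_4$ has stabilizer exactly $K$ inside $M_2(K[x]/(x^4))$ is true (the locus of $U$ preserved by some non-scalar endomorphism is a proper closed subvariety of the $16$-dimensional Grassmannian, while the projectivized automorphism group has dimension $15$, so orbits have positive codimension), but this is precisely the content of the step and must be carried out; note also that your $\widetilde{\D}_5$ observation on the cover only yields representation-infiniteness, which does not by itself give $\tau$-tilting infiniteness. Second, your option (b) for $m=3$ is a dead end: self-injectivity is a derived invariant, and $A_3^\circ$ is not self-injective ($P_2$ is simple projective and not injective), so it cannot be derived equivalent to the self-injective Nakayama algebra you name, Cartan determinants notwithstanding. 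Since option (a) works, the proof strategy as a whole survives, but (b) should be deleted.
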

\begin{proof}
For the representation type of $A$, we refer to \cite{BG, HM}.

Taking the irreducible left mutation of $A$ with respect to $P_2$ (it is a tilting complex),
we observe that its endomorphism algebra $B$ is the opposite algebra of $A$:
\[\begin{array}{c@{\hspace{2cm}}c}
\vcenter{\xymatrix{
1 \ar@(lu,ld)_x & 2 \ar[l]^y
}} & x^m=0
\end{array}\]
Moreover, the irreducible left mutation of $B$ with respect to $P_1$, which is also a tilting complex, possesses $A$ as the endomorphism algebra.
This implies that the twice irreducible left mutation of $A$ with respect to the second and first summands admits $g$-matrix
$\G:=\begin{pmatrix} 1 & 1 \\ 0 & -1 \end{pmatrix}\begin{pmatrix} -1 & 0 \\ m & 1 \end{pmatrix}=\begin{pmatrix} m-1 & 1 \\ -m & -1\end{pmatrix}$.
If the 1-shift of $A$ belongs to this repeated sequence (i.e., iterated left mutations by 2nd, 1st, 2nd, $\cdots$), then we must obtain an equality either $\G^s=-E$ or $\G^s\cdot\begin{pmatrix} 1 & 1 \\ 0 & -1 \end{pmatrix}=\begin{pmatrix} 0 & -1 \\ -1 & 0 \end{pmatrix}$ for some $s\geq0$.
It is easy to verify that it is impossible if $m\geq4$.
Thus, we find out that $A$ is $\tau$-tilting infinite; so it is silting-indiscrete.

It only remains to check that $A$ is silting-discrete when $m=3$.
By hand, we observe that $A$ is $\tau$-tilting finite.
Furthermore, it is seen that the irreducible left mutations of $A$ and $B$ respectively with respect to $P_1$ and $P_2$ have semisimple endomorphism algebras.
Therefore, it turns out that all iterated irreducible left mutations starting from $A$ admit $\tau$-tilting finite endomorphism algebras, whence $A$ is silting-discrete.
\end{proof}

Note that for $m=1, 2$ and $3$, the characteristic polynomial of $\G$ in the proof is cyclotomic of 3rd, 4th and 6th, respectively.
This corresponds that there are mutation paths of the length 3, 4 and 6 from $A$ to the 1-shift of $A$, respectively.

Related to Section \ref{sec:TEA}, let us discuss the trivial extension $\Lambda$ of the algebra $A$ as in Proposition \ref{Am2};
the Cartan matrix of $\Lambda$ is $\begin{pmatrix} 2m & m \\ m & 2\end{pmatrix}$, positive definite iff $1\leq m\leq 3$, non-positive definite positive semidefinite iff $m=4$ and indefinite otherwise.
We remark that $\Lambda$ is also the trivial extension of $A^\op$.
Then, one observes that the irreducible left mutations of $\Lambda$ with respect to $P_1$ and $P_2$ come from those of $A^\op$ and $A$ by applying the functor $-\otimes_A\Lambda$, respectively.
Furthermore, their endomorphism algebras are just $\Lambda$.
Thus, a similar argument as in the proof of Proposition \ref{Am2} works for $\Lambda$, and we obtain the following result.

\begin{proposition}\label{TEofAm2}
Keep the setting above. Then the following are equivalent:
\begin{enumerate}
\item $\Lambda$ is tilting-discrete;
\item It is $\tau$-tilting finite;
\item It has a positive definite Cartan matrix;
\item It is tilting-connected;
\item $m\leq3$.
\end{enumerate}
In this case, any algebra derived equivalent to $\Lambda$ is isomorphic to $\Lambda$ itself.
\end{proposition}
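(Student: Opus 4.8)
The plan is to transplant the argument of Proposition~\ref{Am2} into the tilting theory of the symmetric algebra $\Lambda$, keeping careful track of endomorphism algebras and $g$-matrices along iterated irreducible mutations. To begin, the Cartan matrix $C=\begin{pmatrix}2m&m\\m&2\end{pmatrix}$ of $\Lambda$ has $\det C=m(4-m)$, so $C$ is positive definite exactly when $m\le 3$, singular when $m=4$, and indefinite when $m\ge 5$; this is the equivalence (3)$\Leftrightarrow$(5). Next, (1)$\Rightarrow$(2) is the definition of tilting-discreteness, (2)$\Rightarrow$(3) is \cite[Proposition~3.2]{H} applied to the symmetric algebra $\Lambda$, and (1)$\Rightarrow$(4) holds because a tilting-discrete symmetric algebra is tilting-connected \cite{Ai}. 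It therefore remains to prove (5)$\Rightarrow$(1) and (4)$\Rightarrow$(5); and since the failure of (3) for $m\ge 4$ already forces the failure of (2) and (1), the substance of (4)$\Rightarrow$(5) is precisely that $\Lambda$ is not tilting-connected once $m\ge 4$.

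For the common mechanism, recall from the paragraph preceding the statement that the two irreducible left mutations $\mu_{P_1}^-(\Lambda)$ and $\mu_{P_2}^-(\Lambda)$ are tilting complexes obtained, via $-\otimes_A\Lambda$ and its $A^{\op}$-analogue, from irreducible left mutations of $A^{\op}$ and $A$, and that both have endomorphism algebra isomorphic to $\Lambda$. Applying the duality $\RHom_\Lambda(-,\Lambda)$ together with the isomorphism $\Lambda\cong\Lambda^{\op}$ (valid because $\Lambda$ is symmetric), which interchanges left and right mutations, one sees that the two irreducible right mutations of $\Lambda$ also have endomorphism algebra $\cong\Lambda$. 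Since a derived equivalence carries irreducible mutations to irreducible mutations and preserves the endomorphism algebras of corresponding tilting complexes, an induction on the mutation length shows that every tilting complex in the connected component $\C_\Lambda$ of the tilting quiver has endomorphism algebra isomorphic to $\Lambda$; by Lemma~\ref{gmatrices} its $g$-matrix is accordingly a word in $\G_1^{\pm1},\G_2^{\pm1}$, where $\G_i:=\G_{\mu_{P_i}^-(\Lambda)}$. A direct computation, parallel to that in Proposition~\ref{Am2}, gives $\G_1=\begin{pmatrix}-1&0\\m&1\end{pmatrix}$ and $\G_2=\begin{pmatrix}1&1\\0&-1\end{pmatrix}$, each of determinant $-1$ and square $E$, with $\G_2\G_1=\begin{pmatrix}m-1&1\\-m&-1\end{pmatrix}$ of trace $m-2$ and determinant $1$.

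For (5)$\Rightarrow$(1), suppose $m\le 3$, so that $C$ is positive definite. Every indecomposable direct summand $X$ of an iterated irreducible left mutation $T$ of $\Lambda$ corresponds, under a derived equivalence induced by $T$, to an indecomposable summand $P_i$ of $\Lambda$, because $\End T\cong\Lambda$; hence $\delta(X)=\dim\End_{\Kb(\proj\Lambda)}(X)=\dim\End(P_i)\in\{2m,2\}$. Thus the values $\delta(X)$ over all such $X$ are bounded, and Theorem~\ref{upperbound} together with the remark following it shows that $\Lambda$ is tilting-discrete. Moreover, by \cite[Theorem~2.4]{AM} every tilting complex of $\Lambda$ is an iterated irreducible left mutation of $\Lambda$, hence has endomorphism algebra $\cong\Lambda$; as every algebra derived equivalent to $\Lambda$ arises as such an endomorphism algebra, it is isomorphic to $\Lambda$. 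This proves (5)$\Rightarrow$(1) and the final assertion.

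For (4)$\Rightarrow$(5), suppose $m\ge 4$. The common mechanism yields a map $\C_\Lambda\to\langle\G_1,\G_2\rangle\subseteq\mathrm{GL}_2(\Z)$, $X\mapsto\G_X$, and $\G_{\Lambda[1]}=-E$. The generators $\G_1,\G_2$ are involutions whose product $\G_2\G_1$ has determinant $1$ and trace $m-2\ge 2$, hence real positive eigenvalues and infinite order (parabolic for $m=4$, hyperbolic for $m\ge 5$); consequently $\langle\G_1,\G_2\rangle$ is the infinite dihedral group, which has trivial centre. Since $-E$ is central in $\mathrm{GL}_2(\Z)$ and $-E\ne E$, it follows that $-E\notin\langle\G_1,\G_2\rangle$, so $\Lambda[1]\notin\C_\Lambda$ and $\Lambda$ is not tilting-connected; this completes the cycle of equivalences. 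The step I expect to demand the most care is the common mechanism — verifying that \emph{every} tilting complex in $\C_\Lambda$, and not merely the iterated left mutations of $\Lambda$, has endomorphism algebra isomorphic to $\Lambda$ (this is where the symmetry of $\Lambda$ enters, through the duality $\RHom_\Lambda(-,\Lambda)$, to control the right mutations), together with the explicit identification of $\G_1$ and $\G_2$; the determinant bookkeeping and the triviality of the centre of the infinite dihedral group are routine.
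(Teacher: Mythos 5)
Your proof is correct and follows essentially the same route as the paper: the Cartan computation for (3)$\Leftrightarrow$(5), the observation that all iterated irreducible mutations of $\Lambda$ again have endomorphism algebra $\Lambda$, Theorem~\ref{upperbound} with its accompanying Remark for (5)$\Rightarrow$(1) and the final assertion, and the $g$-matrix group argument in the style of Theorem~\ref{Kronecker} for (4)$\Rightarrow$(5). Two small points: your justification of $\Lambda\cong\Lambda^{\op}$ (``because $\Lambda$ is symmetric'') is not the right reason in general --- here it follows from $\Lambda\cong T(A)\cong T(A^{\op})\cong T(A)^{\op}$, as the paper notes --- and the unambiguous labelling of summands under $\End(T)\cong\Lambda$ (needed to read off $\G_{F T U}$ as $\G_1$ or $\G_2$ on the nose) is guaranteed by $\dim\End(P_1)=2m\neq 2=\dim\End(P_2)$ for $m\geq 2$.
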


\if0
\old{
We obtain the following corollary.

\begin{corollary}
The dg path algebra of the graded $\infty$-Kronecker quiver with precisely one arrow of degree $-\ell$ for each $\ell\geq0$ is silting-indiscrete.
\end{corollary}
\begin{proof}
Let $A$ be the algebra given by the following quiver  with relations $xy=y^2=yz=0$:
\[\xymatrix{
1 \ar[r]^x & 2 \ar@(ul, ur)^y\ar[r]^z & 3
}\]
It follows from Proposition \ref{AmSID} and \cite[Corollary 2.18]{AH2} that $A$ is not silting-discrete.
Moreover, the idempotent truncation at the vertex 2 yields the dg path algebra $B$ of the graded $\infty$-Kronecker quiver with just one arrow of degree $-\ell$ for each $\ell\geq2$; see \cite[Example 2.13]{CJS},
which is silting-indiscrete by \cite[Corollary 2.18]{AH2} ... converse...
\end{proof}
}
\fi

\subsection{}
We compare 2-term and 3-term silting complexes.
Let $A$ be an algebra.
For a silting complex $T$ and $d>0$, we set
$\dsilt{d}{T}A:=\{U\in\silt A\ |\ T\geq U\geq T[d-1] \}$.
Trivially, $\dsilt{1}{T}A=\{T\}$ and $\dsilt{2}{A}A$ and $\dsilt{3}{A}A$ consist of 2-term and 3-term silting complexes, respectively.
Then we obtain the following result.

\begin{proposition}\label{2vs3}
Let $A$ be a $\tau$-tilting finite algebra and $d>0$.
Then we have the equality
$\dsilt{(d+1)}{A}A=\bigcup_{T}\dsilt{d}{T}A$, where $T$ runs in $\dsilt{2}{A}A$.
Moreover, any 3-term presilting complex is partial silting.
Further, $\dsilt{3}{A}A$ is a finite set if and only if any 2-term silting complex has $\tau$-tilting finite endomorphism algebra.
\end{proposition}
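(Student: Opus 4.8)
The plan is to prove the displayed equality first, and then obtain the two remaining assertions as consequences. The inclusion $\supseteq$ is formal: if $T\in\dsilt{2}{A}A$, i.e. $A\geq T\geq A[1]$, then for $U$ with $T\geq U\geq T[d-1]$ we get $A\geq U$ and $U\geq T[d-1]\geq A[1][d-1]=A[d]$, so $U\in\dsilt{(d+1)}{A}A$. For the reverse inclusion I would take, for $U$ with $A\geq U\geq A[d]$, the complex $T:=U\vee A[1]$, the join in the silting poset. Granting that this join exists (the crucial point, addressed below), $T$ is automatically a $2$-term silting complex: $T\geq A[1]$ by construction and $T\leq A$ since $A$ is a common upper bound of $U$ and $A[1]$; moreover $T\geq U$ by construction. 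It then remains to check $U\geq T[d-1]$. Shifting is an order automorphism of the silting poset, hence preserves joins, so $T[d-1]=U[d-1]\vee A[d]$; since $\Hom_{\Kb(\proj A)}(U,U[j])=0$ for every $j\geq1$ (in particular for $j=d-1+i$ with $i>0$) we have $U\geq U[d-1]$, and $U\geq A[d]$ holds by hypothesis, so $U$ is a common upper bound of $\{U[d-1],A[d]\}$ and $U\geq U[d-1]\vee A[d]=T[d-1]$, i.e. $U\in\dsilt{d}{T}A$.

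Granting the equality, the last two statements follow quickly. For the finiteness criterion, take $d=2$: then $\dsilt{3}{A}A=\bigcup_{T\in\dsilt{2}{A}A}\dsilt{2}{T}A$, and the index set is finite because $A$ is $\tau$-tilting finite. Writing $F$ for the derived equivalence induced by a $2$-term silting complex $T$, the map $U\mapsto FU$ is an order isomorphism from $\dsilt{2}{T}A$ onto the set of $2$-term silting complexes of $\End_{\Kb(\proj A)}(T)$, equivalently its support $\tau$-tilting modules; this set is finite exactly when $\End_{\Kb(\proj A)}(T)$ is $\tau$-tilting finite. So if every $2$-term silting complex of $A$ has $\tau$-tilting finite endomorphism algebra, $\dsilt{3}{A}A$ is a finite union of finite sets; conversely $\dsilt{2}{T}A=\{U\mid T\geq U\geq T[1]\}\subseteq\dsilt{3}{A}A$ (as $A\geq T\geq U\geq T[1]\geq A[2]$), so finiteness of $\dsilt{3}{A}A$ forces that of $\dsilt{2}{T}A$ and hence the $\tau$-tilting finiteness of $\End_{\Kb(\proj A)}(T)$. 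For the clause that any $3$-term presilting complex $X$ is partial silting: taking a minimal projective representative with terms in degrees $0,-1,-2$ we have $\Hom_{\Kb(\proj A)}(A,X[i])=H^i(X)=0$ for $i>0$ and $\Hom_{\Kb(\proj A)}(X,A[j])=0$ for $j>2$ (the complex $\Hom^\bullet(X,A)$ sits in degrees $0,1,2$); running the construction above for the presilting complex $X$ --- now $T$ is the least $2$-term silting complex $S$ with $S\geq X$ --- produces a $2$-term silting complex $T$ with $T\geq X\geq T[1]$, whence $FX$ is a $2$-term presilting complex over $\End_{\Kb(\proj A)}(T)$, hence partial silting there by Bongartz completion \cite{AIR}, so $X$ is partial silting over $A$.

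The one genuine obstacle is the existence of the sandwiching complex $T=U\vee A[1]$ (and its presilting analogue). This is precisely where $\tau$-tilting finiteness is needed: it makes $\dsilt{2}{A}A$ a finite lattice, isomorphic to the lattice of torsion classes of $A$, so that the collection of $2$-term silting complexes $S$ with $S\geq U$ --- a nonempty up-set in this lattice, since it contains $A$ --- is closed under meets and therefore has a least element, which one identifies with $U\vee A[1]$; without the hypothesis the right-hand union is still defined but may fail to exhaust $\dsilt{(d+1)}{A}A$. In the presilting variant one must additionally observe that the only inputs to the ``$U\geq T[d-1]$'' step, namely $\Hom_{\Kb(\proj A)}(X,X[j])=0$ for $j\geq1$ and the membership of $X$ in $[A[2],A]$ in the Hom-vanishing sense, both remain valid, so that $FX$ really does lie between $\End_{\Kb(\proj A)}(T)$ and $\End_{\Kb(\proj A)}(T)[1]$.
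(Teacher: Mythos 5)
Your reverse inclusion hinges entirely on the existence of $T=U\vee A[1]$, and this is where the argument breaks. You justify the existence of a least element of the set $\{S\in\dsilt{2}{A}A \mid S\geq U\}$ by observing that it is ``a nonempty up-set in a finite lattice, hence closed under meets.'' That is backwards: an up-set in a lattice is closed under joins, not meets, and a nonempty subset of a finite lattice has a least element precisely when it is closed under meets --- which is exactly the point at issue and does not follow from anything you have said (nothing rules out two incomparable $2$-term silting complexes $S_1,S_2\geq U$ whose meet in $\dsilt{2}{A}A$ fails to dominate $U$). There is a second, independent problem even if a least element $T$ of that set exists: your verification of $U\geq T[d-1]$ uses that $T[d-1]$ is the join of $U[d-1]$ and $A[d]$ in the \emph{full} silting poset, so that the common upper bound $U$ dominates it. Being least among the $2$-term silting complexes above $U$ gives no such universal property with respect to arbitrary silting complexes, and the full poset $\silt A$ of a $\tau$-tilting finite algebra carries no finiteness or lattice structure you can appeal to (the algebras of Example~\ref{Am} are $\tau$-tilting finite symmetric but not tilting-discrete). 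The same unproved least-element claim recurs in your treatment of a $3$-term presilting complex $X$, where you posit ``the least $2$-term silting $S$ with $S\geq X$.''

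The intended mechanism is not a lattice join but a Bongartz-type completion: from $U$ one extracts a $2$-term presilting complex, completes it to a $2$-term silting complex $T$, and verifies $T\geq U\geq T[d-1]$ directly; this is the content of \cite[Propositions 2.8 and 2.10]{AM}, which the paper simply cites for the first equality, together with \cite[Proposition 2.16]{Ai} for the partial-silting statement and \cite{AIR} for the last one. In particular your diagnosis that $\tau$-tilting finiteness is ``precisely where it is needed'' for the displayed equality does not match the cited source, which does not use the lattice structure you invoke. The parts of your argument that do work are the inclusion $\supseteq$ and the derivation of the finiteness criterion from the case $d=2$, with the caveat that a $2$-term silting complex need not induce a derived equivalence; the order isomorphism $\dsilt{2}{T}A\simeq\sttilt\End_{\Kb(\proj A)}(T)$ is the silting-reduction statement from \cite{AIR}, not a consequence of a derived equivalence $F$.
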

\begin{proof}
The first assertion follows from \cite[Proposition 2.8 and 2.10]{AM}.
The second one is due to \cite[Proposition 2.16]{Ai}.
The last one is by \cite{AIR}.
\end{proof}

\section*{Acknowledgements}
The author is very grateful to Florian Eisele and Wassilij Gnedin for useful discussions. 

\appendix
\section{Algebras of (generalized) cyclotomic type}\label{append:cyclotomic}

We briefly explain advantages of algebras admitting positive definite Cartan matrices; 
for instance, we saw such algebaras appear if thier trivial extensions are $\tau$-tilting finite.

Let $A$ be an algebra with nonsingular Cartan matrix $C$ and $\Phi$ denote its Coxeter matrix: i.e., $\Phi:=-C^\tr C^{-1}$.
Here, $(-)^\tr$ stands for the transpose of the matrix.
We say that $A$ is \emph{of (generalized) cyclotomic type} if the characteristic polynomial of $\Phi$ factorizes as the product of cyclotomic polynomials (all the eigenvalues of $\Phi$ have abusolute value one).
Note that if an algebra of  generalized cyclotomic type has an integral Coxeter matrix (e.g., $A$ is Iwanaga--Gorenstein), then it is of cyclotomic type, due to Kronecker's theorem.

\begin{remark}\label{Defofcyclotomic}
There is another definition of Coxeter matrices, which does not need the assumption of $A$ having a regular Cartan matrix.
However, we assume that $A$ has finite right injective dimension.
Then, the triangle functor $\tau:=-\Ltensor_A DA[-1]:\Db(\mod A)\to\Db(\mod A)$ is well-defined, where $D$ stands for the $K$-dual.
Now, we define the \emph{Coxeter matrix} as the matrix expression of $[\tau]: K_0(\Db(\mod A))\to K_0(\Db(\mod A))$ with respect to the canonical basis $\{S_i\}$.
Here, $S_i$ is a simple module corresponding to $P_i$.
If in addition the Cartan matrix $C$ of $A$ is regular, then the Coxeter matrix coincides with the matrix $\Phi$.
Following this manner, every gentle algebra is of cyclotomic type \cite[Theorem 6.14]{JGM}.
Also, any selfinjective algebra has cyclotomic type.

To illustrate, we use the radical-square-zero algebra presented by the quiver $\xymatrix{1 \ar@<2pt>[r] & 2 \ar@<2pt>[l]}$.
Then, its Cartan and Coxeter matrices are $\begin{pmatrix} 1 & 1 \\ 1 & 1\end{pmatrix}$ and $\begin{pmatrix} 0 & -1 \\ -1 & 0\end{pmatrix}$, respectively.
Thus, this says that the algebra is actually of cyclotomic type with singular Cartan matrix.

\end{remark}

The following is the main result of this appendix we might as well recall.

\begin{proposition}\label{cyclotomic}
Let $A$ be an algebra with positive definite Cartan matrix $C$.
Then:
\begin{enumerate}
\item $A$ is of generalized cyclotomic type;
\item The Coxeter matrix $\Phi$ of $A$ is diagonalizable;
\item It has no eigenvalue one.
\end{enumerate}
\end{proposition}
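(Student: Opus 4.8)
The plan is to work with the Coxeter matrix $\Phi=-C^\tr C^{-1}$ and exploit the positive definiteness of $C$ to control its eigenvalues and diagonalizability. First I would observe that since $C$ is positive definite, it admits a Cholesky-type factorization, or more simply that $C^{-1}$ is positive definite as well; writing $B:=C^{-\tr/2}C^{\tr}C^{-1/2}$ (a symmetrization by the positive-definite square root $C^{1/2}$), one sees that $\Phi=-C^\tr C^{-1}$ is conjugate (via $C^{1/2}$) to $-C^{-\tr/2}C^\tr C^{-\tr/2}\cdot(\text{something symmetric})$; the cleanest route is to note that $\Phi^\tr = -C C^{-\tr}$ and that $\Phi$ is conjugate to $-(C^{1/2})^{-1}C^\tr C^{-1}C^{1/2} \cdot$, so the key point is that $-\Phi$ is conjugate to the matrix $M:=C^{-1/2}C^\tr C^{-1/2}$, which is a product of the form $N^\tr$ and $N$ up to conjugacy — concretely $M = (C^{-1/2}C^{1/2}\cdot\text{(unitary-like)})$. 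I would then use that $C^\tr C^{-1}$ and $C^{-1}C^\tr$ are similar and that $M M^\tr$-type arguments show $M$ is conjugate to an orthogonal matrix, hence diagonalizable with all eigenvalues of absolute value one. This gives (1) and (2) simultaneously.

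More precisely, the central lemma I would isolate is: if $C$ is a positive-definite real matrix, then $C^{-\tr}C$ is conjugate (over $\mathbb{R}$, via $C^{1/2}$ where $C^{1/2}$ is the positive-definite symmetric square root of $C$... but wait, $C$ need not be symmetric) — so instead I use the positive-definite \emph{symmetric} matrix $C+C^\tr$, or better, I recall the standard fact from the theory of bilinear forms that $\Phi = -C^\tr C^{-1}$ preserves the (possibly non-symmetric) form given by $C$, in the sense that $\Phi^\tr C \Phi = C$; here the positive-definiteness enters by saying the symmetric part $C_{\mathrm{sym}}:=\tfrac12(C+C^\tr)$ is positive definite, and $\Phi$ is an isometry of the inner product $\langle x,y\rangle = x^\tr C_{\mathrm{sym}}\, y$ up to a correction — actually $\Phi^\tr C \Phi=C$ gives, upon symmetrizing, $\Phi^\tr C_{\mathrm{sym}}\Phi = C_{\mathrm{sym}}$, so $\Phi$ is a genuine isometry of a positive-definite inner product. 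An isometry of a positive-definite real inner product is conjugate to an orthogonal matrix, hence diagonalizable over $\mathbb{C}$ with all eigenvalues on the unit circle. This settles (1) and (2).

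For (3), I would argue that $1$ cannot be an eigenvalue of $\Phi$: since $\Phi=-C^\tr C^{-1}$, the equation $\Phi v = v$ rewrites as $-C^\tr C^{-1}v = v$, i.e. $C^\tr w = -C w$ where $w:=C^{-1}v$, i.e. $(C+C^\tr)w=0$. But $C+C^\tr = 2C_{\mathrm{sym}}$ is positive definite, hence invertible, forcing $w=0$ and thus $v=0$. Therefore $1$ is not an eigenvalue. (Symmetrically, $-1$ is not an eigenvalue either, via $C-C^\tr$... no — $-1$ need not be excluded and indeed isn't claimed.)

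The main obstacle I anticipate is purely a matter of bookkeeping: being careful that $C$ is in general \emph{not} symmetric (the Cartan matrix of a non-selfinjective algebra is typically not symmetric), so one cannot naively take a square root of $C$ itself; the correct object to use is the positive-definite \emph{symmetrized} form $C_{\mathrm{sym}}=\tfrac12(C+C^\tr)$, and the identity that makes everything run is $\Phi^\tr C\Phi = C$ (which holds for \emph{any} invertible $C$ and $\Phi=-C^\tr C^{-1}$, by direct computation), together with its symmetrized consequence $\Phi^\tr C_{\mathrm{sym}}\Phi = C_{\mathrm{sym}}$. Once that identity is in hand, the spectral conclusions are standard linear algebra (an isometry of a Euclidean inner product is semisimple with unimodular eigenvalues), and (3) is the one-line observation above. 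I would also remark that "generalized cyclotomic type" requires only that the eigenvalues be unimodular, not that the characteristic polynomial be an integer polynomial; the latter (and hence genuine cyclotomic type) follows additionally by Kronecker's theorem when $\Phi$ is integral, e.g. in the Iwanaga--Gorenstein case, as already noted before the statement.
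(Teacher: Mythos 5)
Your strategy for (3) is correct and clean, and your overall plan for (1)--(2) (exhibit $\Phi$ as an isometry of a positive definite symmetric form, hence conjugate to an orthogonal matrix, hence semisimple with unimodular spectrum) is a perfectly good route. But the identity you hang everything on is false: with the paper's convention $\Phi=-C^\tr C^{-1}$ one has $\Phi^\tr=-C^{-\tr}C$ and therefore $\Phi^\tr C\Phi=C^{-\tr}C^{2}C^\tr C^{-1}$, which is not $C$ in general; for $C=\left(\begin{smallmatrix}1&1\\0&1\end{smallmatrix}\right)$ a direct computation gives $\Phi^\tr C\Phi=\left(\begin{smallmatrix}3&-1\\-2&1\end{smallmatrix}\right)$. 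What the Coxeter matrix actually preserves is the \emph{Euler} form, not the Cartan form: $\Phi^\tr C^{-\tr}\Phi=C^{-\tr}C\cdot C^{-\tr}\cdot C^\tr C^{-1}=C^{-\tr}$, and transposing, $\Phi^\tr C^{-1}\Phi=C^{-1}$ as well. (Equivalently, the matrix satisfying $\Psi^\tr C\Psi=C$ is not $\Phi$ but its conjugate $\Psi:=C^{-1}\Phi C=-C^{-1}C^\tr$; your computation for (3), via the substitution $w=C^{-1}v$, is exactly this conjugation, which is why that part comes out right.)

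The repair is routine but must be said: symmetrizing the correct identity gives $\Phi^\tr S\Phi=S$ with $S:=\tfrac12(C^{-1}+C^{-\tr})$, and $S$ is positive definite precisely because $C_{\mathrm{sym}}$ is --- substituting $y=Cx$ one finds $\mathrm{Re}\bigl(\overline{y^\tr}C^{-1}y\bigr)=\mathrm{Re}\bigl(\overline{x^\tr}C^\tr x\bigr)=\mathrm{Re}\bigl(\overline{x^\tr}Cx\bigr)>0$ for $x\neq0$. With $S$ in place of $C_{\mathrm{sym}}$ your argument closes: writing $S=R^\tr R$, the matrix $R\Phi R^{-1}$ is orthogonal, so $\Phi$ is diagonalizable with all eigenvalues of absolute value one, and $(C+C^\tr)w=0$ forces $w=0$, excluding the eigenvalue $1$. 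This is genuinely different in presentation from the paper's proof, which never conjugates to an orthogonal matrix but works directly with the sesquilinear form $\phi(x)=\overline{x^\tr}C^{-1}x$ (the same form $S$, in effect): it derives $\phi(x)=-\lambda\overline{\phi(x)}$ on an eigenvector to get $|\lambda|=1$ and $\lambda\neq1$, and shows $\phi\bigl((\Phi-\lambda E)y\bigr)=0$ for a rank-two generalized eigenvector $y$ to get diagonalizability; your version, once corrected, packages the same positivity into a single standard linear-algebra fact. Finally, the first paragraph's attempts to take square roots of the non-symmetric $C$ should simply be deleted; they are not needed and cannot be made to work as written.
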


Although we do not assume that $A$ is triangular, the discussion as in \cite{P} works for the assertions; see also \cite{S}.
For the convenience of the reader, we give a proof here.

\begin{proof}
%
We consider the quadratic form $\phi(x):=\overline{x^\tr}C^{-1}x$, where $(-)^\tr$ and $(-)$ denote the transpose and the complex conjugate of a matrix.
Note that its real part is strictly positive.
Indeed, putting $y:=Cx$ we get
$\phi(y)=\overline{x^\tr}C^tx=\overline{\overline{x^\tr}Cx}$.
When $x$ runs all vectors, so does $y$.
Hence, we figure out the claim since the real part of $\overline{x^\tr}Cx$ is positive.

Let $\lambda$ be an eigenvalue of $\Phi$ and $x$ the corresponding eigenvector.
We have equalities
\[\phi(x)=-\overline{x^\tr}C^{-\tr}\Phi x=-\lambda\overline{x^\tr}C^{-\tr}x=-\lambda\overline{\phi(x)}.\]
Therefore, we get $|\lambda|=1$, but $\lambda\neq1$.
Thus, the assertions (1) and (3) follow.

Suppose $\Phi$ has a generalized eigenvector $y$ of rank 2 with corresponding eigenvalue $\lambda$, and put $x:=(\Phi-\lambda E)y\ (\neq0)$; $E$ is the identity matrix.
Then, we observe equalities
\[\phi(x)=\overline{(C^{-\tr}x)^\tr}\cdot x=\overline{(-C^{-1}y-\lambda C^{-\tr}y)^\tr}\cdot x=-\overline{y^\tr}C^{-\tr}x-\overline{\lambda}\cdot\overline{y^\tr}C^{-1}x=0.\]
The last equality holds because of $|\lambda|^2=1$.
This contradiction shows the assertion (2).
\end{proof}

\begin{example}
Let $A$ be a selfinjective algebra.
Then, the Coxeter matrix $\Phi$ is determined by the Nakayama permutation $\sigma$ and is diagonalizable.
Moreover, the Coxeter polynomial is the product of polynomials $x^\ell\pm1$, where $\ell$ is the length of a cyclic permutation in the cycle decomposition of $\sigma$ and $\pm$ is determined by the parity of $\ell$;
minus if $\ell$ is even and plus if $\ell$ is odd.
So, we see that $\Phi$ has no eigenvalue one iff $\sigma$ is an even permutation.
\end{example}

Let $A$ be an Iwanaga--Gorenstein algebra and put $\nu:=-\Ltensor_ADA$.
We say that $A$ is \emph{fractionally Calabi--Yau} (abbr. FCY) if there exist integers $p$ and $q>0$ such that $\nu^q$ and $[p]$ are functorially isomorphic on $\Kb(\proj A)$.
In the case, some power of $\tau:=\nu\circ[-1]$ becomes a shift; so, the Coxeter matrix $\Phi$ of $A$ is periodic; i.e., $\Phi^\ell=E$ for some $\ell>0$.

\begin{theorem}\cite{P}
An FCY algebra is of cyclotomic type with $\Phi$ diagonalizable.
\end{theorem}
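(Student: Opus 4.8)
The bulk of the proof is already contained in the paragraph immediately preceding the statement, so my plan is to make that explicit and then dispatch the rest with elementary linear algebra. Since $A$ is Iwanaga--Gorenstein, $\nu=-\Ltensor_A DA$ restricts to an autoequivalence of $\per A=\Kb(\proj A)$ which commutes with the shift, so $\tau^q=(\nu\circ[-1])^q\simeq\nu^q\circ[-q]\simeq[p-q]$ is a pure shift; since the shift acts as $-E$ on $K_0(\Kb(\proj A))\cong\mathbb{Z}^n$ and $\tau$ acts there by a conjugate of the Coxeter matrix $\Phi$, this gives $\Phi^{2q}=E$. Thus I may take as my starting point that $\Phi$ is a periodic integer matrix, say $\Phi^{\ell}=E$ for some $\ell>0$.

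The remainder is elementary linear algebra, carried out over characteristic zero. I would stress at the outset that $\Phi\in M_n(\mathbb{Z})$ regardless of $\operatorname{char}K$, so the spectral considerations below take place over $\mathbb{Q}$ and $\mathbb{C}$, never over $K$ itself. From $\Phi^{\ell}=E$ the minimal polynomial $m_{\Phi}(x)\in\mathbb{Q}[x]$ divides $x^{\ell}-1$; since $x^{\ell}-1$ is coprime to its derivative $\ell x^{\ell-1}$ in $\mathbb{C}[x]$ (here $\ell$ is a nonzero integer), it has no repeated complex root, so $m_{\Phi}$ is separable and $\Phi$ is diagonalizable over $\mathbb{C}$, which is the first assertion. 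For cyclotomic type, every eigenvalue of $\Phi$ is a root of $m_{\Phi}$, hence of $x^{\ell}-1$, hence an $\ell$th root of unity, so all eigenvalues lie on the unit circle. To obtain the sharp form of the definition, observe that $\chi_{\Phi}(x)=\det(xE-\Phi)$ is monic with integer coefficients, and factor it into monic irreducibles over $\mathbb{Z}$ (equivalently over $\mathbb{Q}$, by Gauss's lemma); each such factor is the minimal polynomial over $\mathbb{Q}$ of one of its roots, which is a primitive $d$th root of unity for some $d\mid\ell$, hence equals the $d$th cyclotomic polynomial. Therefore $\chi_{\Phi}$ is a product of cyclotomic polynomials, and $A$ is of cyclotomic type.

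I do not expect a genuine obstacle. The only points needing care are bookkeeping --- which Grothendieck group and basis are used (the periodicity of $\Phi$ is conjugation-invariant, so the choice is immaterial) --- and the reminder that even when $\operatorname{char}K>0$ the matrix $\Phi$ is integral and must be analyzed in characteristic zero. If one prefers to avoid quoting the classification of monic irreducible integer polynomials with root-of-unity roots, one may instead apply Kronecker's theorem (an algebraic integer all of whose conjugates have absolute value one is a root of unity), exactly as the definition of cyclotomic type above suggests; but since the eigenvalues here are manifestly roots of unity, the direct argument is shortest.
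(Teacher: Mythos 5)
Your proof is correct and follows exactly the route the paper intends: the paper states (in the paragraph preceding the theorem) that the FCY condition forces $\Phi^{\ell}=E$ for some $\ell>0$ and then cites \cite{P} without further argument, and your linear-algebra completion (separability of $x^{\ell}-1$ gives diagonalizability; integrality plus roots of unity gives a product of cyclotomic polynomials, or alternatively Kronecker's theorem as the paper itself suggests) is the standard and correct way to finish. No gaps.
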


Furthermore, the trivial extension of an FCY algebra admits nice structure.

\begin{theorem}\cite{CDIM}
An algebra of finite global dimension is FCY if and only if its trivial extension is periodic; i.e., every module is periodic by the syzygy.
\end{theorem}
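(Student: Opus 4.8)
The plan is to route everything through the repetitive algebra $\widehat{A}$ and Happel's theorem. Since $A$ has finite global dimension, there is a triangle equivalence $\Db(\mod A)\simeq\smod\widehat{A}$ under which the shift $[1]$ becomes the cosyzygy $\Omega^{-1}$. The trivial extension $\Lambda$ is symmetric, hence self-injective, and is the orbit algebra of $\widehat{A}$ by the grading-shift automorphism $\phi$; correspondingly $\smod^{\Z}\Lambda\simeq\smod\widehat{A}\simeq\Db(\mod A)$. On $\Db(\mod A)$ this puts two commuting autoequivalences: the suspension $[1]$ (from $\Omega^{-1}$) and the image of the degree shift. The first step I would carry out is to identify this second functor: I claim that, up to a fixed power of $[1]$, the degree shift corresponds to the derived Nakayama functor $\nu=-\Ltensor_A DA$, say $G:=\nu[c]$ for a fixed integer $c$. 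Establishing this dictionary precisely, by tracking minimal graded projective resolutions of the degree-$0$ stalk modules, is the structural core where Serre duality enters.

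With the dictionary in hand, I would reformulate both sides. Fractional Calabi--Yauness of $A$ is the relation $\nu^{q}\cong[p]$ for some $p$ and $q>0$. Periodicity of $\Lambda$ is the statement that $\Omega^{N}\cong\identity$ on $\smod\Lambda$ for some $N>0$. To compare them I would invoke covering theory: the pushdown $\smod\widehat{A}\to\smod\Lambda$ exhibits $\smod\Lambda$ as the $\langle\phi\rangle$-orbit category, so under the equivalences above $\smod\Lambda$ is modelled by $\Db(\mod A)/\langle G\rangle$, with suspension induced by $[1]$. Thus $\Omega^{N}\cong\identity$ on $\smod\Lambda$ translates into the requirement that $[-N]$ lie in the $G$-orbit of the identity, i.e. $[-N]\cong G^{k}=\nu^{k}[ck]$ in $\Db(\mod A)$ for some $k\in\Z$.

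Now the two conditions match directly. If $\Lambda$ is periodic, then $[-N]\cong\nu^{k}[ck]$ yields $\nu^{k}\cong[-N-ck]$; here $k\neq0$, since $k=0$ would give $[-N]\cong\identity$ on $\Db(\mod A)$, impossible for $N>0$ because the shift has infinite order on a nonzero derived module category. Normalizing the sign of $k$ then gives the fractional Calabi--Yau relation. Conversely $\nu^{q}\cong[p]$ gives $G^{q}\cong[p+cq]$, hence $[p+cq]\cong\identity$ in the orbit category, i.e. $\Omega^{-(p+cq)}\cong\identity$ on $\smod\Lambda$; and $p+cq\neq0$, for otherwise $G^{q}\cong\identity$ on $\Db(\mod A)$ would force the grading shift $(q)$ to be trivial on $\smod^{\Z}\Lambda$, which fails since a simple stalk $S_i$ and its shift $S_i\langle q\rangle$ are non-isomorphic there whenever $A\neq0$. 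Hence $\Lambda$ is periodic of period $|p+cq|$.

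The main obstacle is the passage between the graded and ungraded stable categories. On the functor side, I must pin down the degree-shift dictionary $G=\nu[c]$ together with the compatibility of the triangulated suspension of $\smod^{\Z}\Lambda$ with $[1]$; this is a concrete but delicate resolution computation. On the categorical side, I must justify that $\smod\Lambda$ really is the $\langle\phi\rangle$-orbit category of $\smod\widehat{A}$ -- density of the pushdown (gradability of $\Lambda$-modules) and faithfulness modulo $\phi$-orbits -- and that the relation $\Omega^{N}\cong\identity$ transfers across this comparison even though orbit categories of triangulated categories need not themselves be triangulated. Once these transfer statements are secured, the exponent bookkeeping above, together with the infinite-order observations that rule out the degenerate vanishing-exponent cases, completes the equivalence.
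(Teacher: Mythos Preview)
The paper does not prove this theorem; it merely quotes it from \cite{CDIM}. So there is no ``paper's own proof'' to compare against, and your task was in effect to reconstruct a nontrivial result from the literature.

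Your architecture---Happel's equivalence $\Db(\mod A)\simeq\smod\widehat{A}$, the covering $\widehat{A}\to\Lambda$, and the identification of the degree shift with the Serre functor---is the right framework and is indeed how \cite{CDIM} is organised (with $c=0$, i.e.\ the degree shift corresponds to $\nu$ itself). But there is a genuine gap at the heart of your reformulation. You write: ``Periodicity of $\Lambda$ is the statement that $\Omega^{N}\cong\identity$ on $\smod\Lambda$ for some $N>0$.'' This is \emph{not} the hypothesis. The hypothesis is that each module $M$ admits some $n_M$ with $\Omega^{n_M}M\cong M$; equivalently, taking the lcm over the finitely many simples, that $\Omega^N S_i\cong S_i$ for all $i$ and some common $N$. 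Passing from this to a \emph{functorial} isomorphism $\Omega^N\cong\identity$ is precisely the hard step: objectwise periodicity of the simples gives only \emph{twisted} bimodule periodicity $\Omega^N_{\Lambda^e}(\Lambda)\cong{}_1\Lambda_\sigma$ for some automorphism $\sigma$, and removing the twist is one of the main technical contributions of \cite{CDIM}. Your sketch assumes away exactly this point.

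The second obstacle you flag---density of the pushdown, i.e.\ gradability of all $\Lambda$-modules---is also substantive and is likewise handled in \cite{CDIM} by nontrivial arguments (it is not automatic, and the orbit category $\Db(\mod A)/\langle\nu\rangle$ is generally not triangulated, so the comparison with $\smod\Lambda$ must be made carefully at the level of objects and periods rather than as a triangle equivalence). Your forward direction is closer to complete, since $\nu^q\cong[p]$ does push down to give periodicity of every module in the image of the pushdown, but you still need gradability to conclude for \emph{all} $\Lambda$-modules. In short: the outline is sound, but the two ``obstacles'' you list in your final paragraph are not loose ends---they are the theorem.
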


\section{When is a $\tau$-tilting finite algebra with positive definite Cartan matrix piecewise hereditary of Dynkin type?}\label{append:phad}

We recall that the (Morita equivalence) class $\H$ consists of $\tau$-tilting infinite algebras, algebras with non-positive definite Cartan matrices, and PHADs.
That is, if a member of $\H$ is $\tau$-tilting finite and has a positive definite Cartan matrix, then it is a PHAD.
In this appendix, we show that $\H$ contains:
\begin{enumerate}
\setcounter{enumi}{-1}
\item piecewise hereditary algebras;
\item simply-connected algebras;
\item algebras with preprojective components whose Coxeter matrices have trace $-1$;
\item locally hereditary algebras;
\item algebras admitting a sincere and directing indecomposable module;
\item algebras whose trivial extensions are stable equivalent to the trivial extensions of some derived hereditary algebras.
\end{enumerate}
We also exhibit that:
\begin{enumerate}
\setcounter{enumi}{5}
\item it is closed under taking one-point extensions by indecomposable modules.
\end{enumerate}

In the case of (0), the algebra is $\tau$-tilting finite if and only if it has a positive definite Cartan matrix if and only if it is of Dynkin type.
The case (2) follows from \cite{AS2, W}.

Let us check above in order from (2).

\begin{proposition}
Every algebra with preprojective component and whose Coxeter matrix admits trace $-1$ is a member of $\H$.
\end{proposition}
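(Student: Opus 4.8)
The plan is to reduce to the representation-finite situation and then quote a classification of such algebras in terms of their Coxeter matrix. Since every $\tau$-tilting infinite algebra and every algebra with a non-positive definite Cartan matrix already lies in $\H$ by definition, it suffices to treat an algebra $A$ which has a preprojective component, whose Coxeter matrix has trace $-1$, which is $\tau$-tilting finite, and whose Cartan matrix is positive definite, and to prove that such an $A$ is a PHAD.

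First I would show that $A$ is representation-finite. Every indecomposable module lying in a preprojective component is directing, and a directing module is a brick (its endomorphism ring has no nonzero nonisomorphism, hence equals $K$). As $A$ is $\tau$-tilting finite it has only finitely many bricks \cite{DIJ}, so the preprojective component contains only finitely many modules; being a finite connected component of the Auslander--Reiten quiver of the connected algebra $A$, it must, by Auslander's finiteness theorem, be the whole Auslander--Reiten quiver, so $A$ is representation-finite. Consequently $A$ is representation-directed: its Auslander--Reiten quiver is finite and directed, and in particular $A$ has finite global dimension, so its Coxeter matrix $\Phi_A=-C^\tr C^{-1}$ is integral.

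Now I would bring in the positive definiteness. By Proposition \ref{cyclotomic} the matrix $\Phi_A$ is diagonalizable, all its eigenvalues lie on the unit circle, and $1$ is not among them; since $\Phi_A$ is also integral, Kronecker's theorem forces every eigenvalue to be a root of unity, hence $\Phi_A$ is of finite order. Thus $A$ is a representation-directed algebra whose Coxeter transformation is periodic and satisfies $\tr(\Phi_A)=-1$, and at this point the assertion follows from the classification of algebras with a preprojective component and Coxeter matrix of trace $-1$ (\cite{AS2, W}): such an $A$ is an iterated tilted algebra of Dynkin type, equivalently a PHAD. Here the value $-1$ of the trace is exactly what separates the Dynkin case from the Euclidean one, while the tubular case is already excluded by positive definiteness. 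The step I expect to be delicate is this last one — confirming that the cited classification applies verbatim to a representation-directed algebra with $\tr(\Phi_A)=-1$ and pinning down the correct reference; the preceding reductions (directing modules are bricks, $\tau$-tilting finiteness bounds the number of bricks, Auslander's theorem, and Kronecker's theorem) are routine. A variant avoiding the spectral computation would quote directly that a representation-directed algebra is tilted of Dynkin type precisely when the trace of its Coxeter matrix is $-1$, in which case positive definiteness of the Cartan matrix is used only to dispatch the easy alternative.
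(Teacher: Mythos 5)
Your reduction is the same as the paper's and is correct: preprojective (hence directing) indecomposables are bricks, $\tau$-tilting finiteness bounds the number of bricks by \cite{DIJ}, so the preprojective component is finite and therefore exhausts the Auslander--Reiten quiver, making $A$ representation-directed. The problem is the final step. You invoke ``the classification of algebras with a preprojective component and Coxeter matrix of trace $-1$'' and attribute it to \cite{AS2, W}, but no such classification exists there: \cite{AS2} characterizes iterated tilted algebras of Dynkin type among \emph{simply-connected} (representation-finite) algebras via positive definiteness of the Euler form, and \cite{W} concerns $\tau$-tilting finite \emph{simply-connected} algebras. Both results require simple-connectedness as an input, and nothing in your argument supplies it. You flag this step as ``delicate'' but do not close it, so the proof as written has a genuine gap.

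The missing idea is precisely how the trace hypothesis is used. The paper's route is: representation-directedness forces $HH^i(A)=0$ for $i\geq 2$ \cite[Corollary 5.4]{Ha1}; since $HH^0(A)=K$, Happel's trace formula turns $\tr(\Phi)=-1$ into $HH^1(A)=0$; and for representation-finite algebras the vanishing of $HH^1$ is equivalent to simple-connectedness \cite[Corollary 5.5]{Ha1}. Only \emph{then} do \cite{AS2, W} apply, and positive definiteness of the Cartan matrix is still needed at that stage to land in Dynkin type. Your proposed ``variant'' --- that a representation-directed algebra is tilted of Dynkin type precisely when $\tr(\Phi)=-1$, with positive definiteness only dispatching an easy alternative --- is false: by the very Hochschild computation above, representation-directed algebras with $\tr(\Phi)=-1$ are exactly the representation-finite simply-connected ones, and these include many algebras that are not piecewise hereditary of Dynkin type (those whose Euler form fails to be positive definite). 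The spectral detour through Proposition \ref{cyclotomic} and Kronecker's theorem (periodicity of $\Phi$) is not needed and does not substitute for the Hochschild cohomology argument.
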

\begin{proof}
Let $A$ be $\tau$-tilting finite and have a positive definite Cartan matrix.
It is well-known that any preprojective module is a brick.
Since $A$ is $\tau$-tilting finite, we obtain from \cite[Theorem 4.2]{DIJ} that the preprojective component coincides with the whole Auslander--Reiten quiver; in particular, $A$ is representation-directed \cite{Mo}.
This implies that the $i$th Hochschild cohomology $HH^i(A)$ vanishes for any $i\geq2$ \cite[Corollary 5.4]{Ha1}.
Furthermore, thanks to Happel's criteria \cite{Ha2}, we have $HH^1(A)=0$ because the trace of the Coxeter matrix is $-1$.
Thus, it turns out that $A$ is simply-connected \cite[Corollary 5.5]{Ha1}; done.
\end{proof}


We call an algebra \emph{locally hereditary} provided any homomorphism between indecomposable projective modules is zero or a monomorphism.


\begin{proposition}
All locally hereditary algebras are in $\H$.
\end{proposition}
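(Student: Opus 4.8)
The plan is to reduce the statement to the simply-connected case. Assume $A$ is locally hereditary, $\tau$-tilting finite, and has a positive definite Cartan matrix; it must be shown that $A$ is a PHAD.

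First I would record two structural consequences of local hereditarity. Every arrow $\alpha$ of the Gabriel quiver $Q$ of $A$ induces, by definition, a monomorphism between the two corresponding indecomposable projectives, and hence so does every path (being a composite of such monomorphisms). Consequently, if a path $p=\alpha_1\alpha_2\cdots\alpha_k$ of length $\ge 2$ lies in the defining ideal, then applying the monomorphism induced by $\alpha_1$ to the element $\alpha_2\cdots\alpha_k$ of the relevant projective gives $\alpha_2\cdots\alpha_k=0$ in $A$, and iterating forces the single arrow $\alpha_k$ to vanish in $A$, which is absurd; thus $A$ has no monomial (zero) relations. Likewise, a nonzero oriented cycle at a vertex $i$ would give a monomorphism $P_i\to P_i$, necessarily an isomorphism since $P_i$ is finite-dimensional, whereas its image is contained in $\operatorname{rad} P_i\subsetneq P_i$; together with the previous point this excludes every oriented cycle. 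So $Q$ is acyclic, $A$ is triangular, each $\operatorname{rad} P_i$ is a sum of submodules isomorphic to indecomposable projectives, and all relations of $A$ are commutativity-type combinations of at least two parallel paths.

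Next I would argue that $A$ is representation-directed. Using that $Q$ is acyclic together with the above description of the radicals of the projectives, $A$ admits a preprojective component; since $A$ is $\tau$-tilting finite, \cite[Theorem 4.2]{DIJ} forces this component to be the whole Auslander--Reiten quiver, so $A$ is representation-directed by \cite{Mo}. In particular $HH^i(A)=0$ for all $i\ge 2$ by \cite[Corollary 5.4]{Ha1}. Moreover the absence of monomial relations, combined with the positive definiteness of the Cartan matrix (which bounds the first Betti number of $Q$ and rules out extended-Dynkin and canonical-type subconfigurations), yields that the Coxeter matrix of $A$ has trace $-1$; hence $HH^1(A)=0$ by Happel's criterion, exactly as in the proof of the preceding proposition. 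Thus $HH^i(A)=0$ for all $i\ge 1$, so $A$ is simply-connected by \cite[Corollary 5.5]{Ha1}, and therefore $A$ is a PHAD by the already-established simply-connected case (\cite{AS2, W}).

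The main obstacle is the step that $A$ is representation-directed — concretely, producing a preprojective component for a $\tau$-tilting finite locally hereditary algebra — together with the vanishing $HH^1(A)=0$ (equivalently, that the Coxeter matrix has trace $-1$). Both ultimately rely on the rigidity coming from the absence of monomial zero relations: it is this that keeps a locally hereditary algebra sufficiently simply connected once the positive definiteness of the Cartan matrix has excluded the tame hereditary and canonical configurations.
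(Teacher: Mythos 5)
Your overall strategy --- reduce to the simply-connected case --- is the same one the paper uses, but the paper gets there in one step by citing \cite[Theorem 4.11]{AHMW} and its proof, which show that a $\tau$-tilting finite locally hereditary algebra is already \emph{strongly simply-connected}; the conclusion then follows from the simply-connected case of the appendix. Your preliminary observations (no zero relations, acyclicity of the quiver, hence triangularity) are correct and are indeed part of the structure theory of locally hereditary algebras.

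The genuine gap is the step where you claim that the absence of monomial relations together with positive definiteness of the Cartan matrix ``yields that the Coxeter matrix of $A$ has trace $-1$.'' No argument is given, and none of the cited ingredients supplies one. Note that, once you know $HH^0(A)=K$ and $HH^i(A)=0$ for $i\geq 2$, Happel's Euler-characteristic formula gives $1-\dim HH^1(A)=-\operatorname{tr}\Phi$, so asserting $\operatorname{tr}\Phi=-1$ is \emph{equivalent} to asserting $HH^1(A)=0$; your argument is therefore circular at exactly the point where the work has to be done. In the preceding proposition of the paper the trace condition is a \emph{hypothesis} on the class of algebras considered, not something derived, so it cannot be imported here. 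A secondary weakness is the assertion that $A$ admits a preprojective component: acyclicity of the quiver plus the radicals of the projectives being sums of projective submodules does not by itself produce a preprojective component (one would need something like the separation condition), and you give no reference or argument. To repair the proof you would either have to prove $HH^1(A)=0$ for $\tau$-tilting finite locally hereditary algebras directly, or follow the paper and invoke the strong simple connectedness established in \cite[Theorem 4.11]{AHMW}.
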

\begin{proof}
By \cite[Theorem 4.11]{AHMW} and its proof, a $\tau$-tilting finite locally hereditary algebra is strongly simply-connected.
\end{proof}


\begin{proposition}\label{sinceredirecting}
The class of algebras possessing a sincere and directing indecomposable module is contained in $\H$.
\end{proposition}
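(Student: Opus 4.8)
The plan is to show that an algebra $A$ with a sincere directing indecomposable module belongs to $\H$; equivalently, if such an $A$ is additionally $\tau$-tilting finite and has a positive definite Cartan matrix, then it is a PHAD. First I would recall the structure theory of algebras with a sincere directing module: by work of Ringel (and the Bongartz/Happel--Vossieck classification underlying it), an algebra admitting a sincere directing indecomposable module is a ``directed algebra'' in a strong sense, and in particular is representation-directed, hence representation-finite. Thus $A$ is automatically representation-finite, and the $\tau$-tilting finiteness hypothesis is free; the real content is extracting piecewise-heredity of Dynkin type from the positive definiteness of the Cartan matrix.

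The key steps, in order, would be: (1) invoke that a sincere directing module forces $A$ to be representation-directed (so its Auslander--Reiten quiver has no oriented cycles and $A$ is representation-finite), citing the relevant results of Ringel on directing modules and directed algebras; (2) use that a representation-directed algebra with a \emph{sincere} directing module is, up to the classification, built from a tame concealed or Dynkin-type shape — more precisely, such algebras are the ``sincere directed algebras'' classified by Bongartz and by Happel--Vossieck, whose frames are controlled lists; (3) observe that among these, the Tits/Euler form governs the representation type, and a positive definite Cartan matrix (hence, via \cite[Proposition 3.2]{H} and the cyclotomic-type consequences in Appendix \ref{append:cyclotomic}, a positive definite Euler form) rules out all the tame/wild frames, leaving exactly the Dynkin ones; (4) conclude that $A$ is then a tilted algebra of Dynkin type — in particular a PHAD — so $A\in\H$, matching the template of the earlier cases (1), (2) and the locally hereditary case, where positive definiteness of the Cartan (equivalently Euler) form is precisely what collapses the classification list to Dynkin type.

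An alternative, perhaps cleaner, route avoids the full sincere-directed classification: since the sincere directing module makes $A$ representation-directed, $A$ is in particular a \emph{directed} algebra, and one can try to show directly that its Tits form is weakly positive and, using positive definiteness of the Cartan matrix together with $HH^i(A)=0$ for $i\geq 2$ (representation-directed algebras have trivial higher Hochschild cohomology by \cite[Corollary 5.4]{Ha1}, as used in the preprojective-component proposition above), reduce to the simply-connected case and then apply \cite{AS2, W} exactly as in case (2). The only gap there is controlling $HH^1(A)$; for a sincere directed algebra one expects $HH^1(A)=0$ as well (the quiver of a sincere directed algebra is a tree, or at least the first Hochschild cohomology vanishes by directedness), so Happel's simple-connectedness criterion \cite{Ha2} applies and we land back in case (2).

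The main obstacle I anticipate is step (2)--(3): pinning down precisely which classification statement to cite for ``algebras with a sincere directing module'' and verifying that positive definiteness of the Cartan matrix eliminates every non-Dynkin entry of that list, rather than merely the representation-infinite ones — one must be careful that positive definiteness of the \emph{Cartan} matrix (as opposed to the Tits form) is what is assumed, so the passage through the Euler form and the cyclotomic-type restrictions of Appendix \ref{append:cyclotomic} (no eigenvalue one, diagonalizable Coxeter matrix) is where the argument really bites. If the direct Hochschild-cohomology route goes through, the obstacle shifts entirely to establishing $HH^1(A)=0$ for sincere directed algebras, which I would handle by a short quiver-theoretic argument showing the Gabriel quiver of such an algebra has no cycles contributing to $HH^1$.
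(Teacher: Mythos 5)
There is a genuine gap at the very first step. You assert that a sincere directing indecomposable module forces $A$ to be representation-directed, hence representation-finite, and you then lean on the Bongartz/Happel--Vossieck classification of \emph{representation-finite} sincere directed algebras. This premise is false: a representation-infinite algebra can perfectly well admit a sincere directing indecomposable module. Already the Kronecker algebra does --- its indecomposable preprojective modules are directing (as is any module in a preprojective component) and all but one of them are sincere --- and more generally every tame concealed algebra has sincere directing preprojective and preinjective modules. So neither the classification lists in your steps (2)--(3) nor your alternative route apply as stated; the alternative route breaks for the same reason, since the vanishing $HH^i(A)=0$ for $i\geq 2$ that you invoke is Happel's result for \emph{representation-directed} algebras, which you have not established.

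The correct (and much shorter) route, which is the one the paper takes, is Ringel's theorem as recorded in \cite[IX, Theorem 2.6(b)]{ASS}: an algebra with a sincere directing indecomposable module is a \emph{tilted} algebra, in particular of finite global dimension. For such an algebra the Euler form is the quadratic form of $C^{-\tr}$, so positive definiteness of the Cartan matrix is positive definiteness of the Euler form; since the Euler form of a tilted algebra of type $Q$ is $\Z$-congruent to that of $KQ$, positive definiteness forces $Q$ to be Dynkin, and $A$ is a PHAD. Note that here, as the paper remarks after the proposition, even the $\tau$-tilting finiteness hypothesis is not needed --- but for the opposite reason to the one you give: it is implied at the end (Dynkin type), not assumed for free at the start via a (false) representation-finiteness claim. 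Your endgame intuition in steps (3)--(4), that positive definiteness collapses everything to Dynkin type, is the right one; the error is in the structure theorem you hang it on.
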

\begin{proof}
By \cite[IX, Theorem 2.6(b)]{ASS}, the algebra is tilted and has finite global dimension.
If it admits a positive definite Cartan matrix ($=$ Euler form), then it is of Dynkin type.
\end{proof}

\begin{proposition}\label{stableequivalent}
An algebra whose trivial extension is stable equivalent to the trivial extension of some derived hereditary algebra belongs to $\H$.
\end{proposition}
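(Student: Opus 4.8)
The statement to prove is Proposition \ref{stableequivalent}: an algebra $A$ whose trivial extension $T(A)$ is stable equivalent to the trivial extension $T(H)$ of some derived hereditary algebra $H$ lies in the class $\H$. Recall that membership in $\H$ means: if $A$ is $\tau$-tilting finite and has a positive definite Cartan matrix, then $A$ is a PHAD (piecewise hereditary of Dynkin type). So I would assume $A$ is $\tau$-tilting finite with positive definite Cartan matrix and aim to conclude that $A$ is a PHAD. The plan is to transport the $\tau$-tilting finiteness and the positive-definiteness across the stable equivalence to constrain $H$, and then use the fact that trivial extensions determine a lot of derived-categorical data.

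First I would note that by hypothesis $A$ has finite global dimension (derived hereditary algebras do, and $A$ shares a trivial extension — actually more directly, I should observe $H$ is derived hereditary, hence of finite global dimension, and since $T(A)$ and $T(H)$ are stable equivalent, one expects $A$ and $H$ to share strong derived-type invariants). The key point is that $\tau$-tilting finiteness of a symmetric algebra is a stable-equivalence invariant (via the identification of 2-term silting complexes with support $\tau$-tilting modules and the behaviour of these under stable equivalence of Morita type, or more elementarily because $\tau$-tilting finiteness of symmetric algebras is governed by the module category which a stable equivalence controls up to the projectives). Hence $T(A)$ $\tau$-tilting finite $\Leftrightarrow$ $T(H)$ $\tau$-tilting finite. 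Now, $T(A)$ being $\tau$-tilting finite forces (by the two implications recalled in the introduction, from \cite{DIRRT} and \cite[Proposition 3.2]{H}) that $A$ has a positive definite Cartan matrix, which is our standing assumption, and likewise we will get that $H$ is $\tau$-tilting finite with positive definite Cartan matrix. A hereditary (or derived hereditary) algebra with positive definite Cartan matrix / Euler form is precisely one of Dynkin type; so $H$ is derived equivalent to a path algebra $KQ$ with $Q$ Dynkin, i.e. $H$ is a PHAD.

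Next, I would argue that $A$ itself must be a PHAD. Since $A$ has finite global dimension, it is derived hereditary if and only if it is piecewise hereditary; the task is to show $A$ is piecewise hereditary of the same (Dynkin) type as $H$. Here the stable equivalence of the trivial extensions is the bridge: by Rickard-type results, two algebras of finite global dimension are derived equivalent iff their trivial extensions (equivalently, their repetitive algebras) are, and a stable equivalence of the self-injective trivial extensions that respects the grading lifts to a derived equivalence of $A$ and $H$. So I would invoke the relation between stable equivalences of self-injective algebras and derived equivalences of the "parts": $T(A)$ and $T(H)$ stably equivalent, both of finite global dimension type, yields $\Db(\mod A)\simeq\Db(\mod H)$ (possibly after checking the equivalence is standard / of Morita type, which in the $\tau$-tilting finite situation one gets for free since there are only finitely many derived equivalent algebras). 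Then $A$ is derived equivalent to $H$, hence to $KQ$ with $Q$ Dynkin, so $A$ is a PHAD, as required.

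The main obstacle I anticipate is the step asserting that a stable equivalence between the trivial extensions lifts to (or at least implies the existence of) a derived equivalence between $A$ and $H$; a general stable equivalence need not respect the natural $\Z$-grading on the trivial extension, and without that grading-compatibility the passage to repetitive algebras / derived categories is not automatic. I would handle this either by restricting to stable equivalences of Morita type (which in the $\tau$-tilting finite setting is not a real loss, cf. the finiteness of the derived equivalence class used repeatedly in the paper) and citing the known lifting theorems, or by arguing more coarsely: extract only the numerical consequence — that the Cartan matrix of $A$, being positive definite, together with finite global dimension and the constraint inherited from $H$ being Dynkin, forces $A$ to be representation-directed and then piecewise hereditary of Dynkin type via the simply-connected / directed-module arguments already used in Appendix \ref{append:phad} (Propositions \ref{sinceredirecting} and the preprojective-component proposition). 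Either route reduces the problem to facts already available in the paper or in the cited literature; the delicate point is making the first, cleaner route rigorous without over-claiming about arbitrary stable equivalences.
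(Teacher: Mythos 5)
There is a genuine gap, and it occurs at the very first step. Membership in $\H$ requires you to show: \emph{if $A$ itself is $\tau$-tilting finite with positive definite Cartan matrix, then $A$ is a PHAD}. Your argument immediately tries to transport $\tau$-tilting finiteness across the stable equivalence between the trivial extensions $\Lambda$ of $A$ and $\Gamma$ of $H$ --- but you have no hypothesis on $\Lambda$ at all. The implication ``$A$ $\tau$-tilting finite with positive definite Cartan matrix $\Rightarrow$ $\Lambda$ $\tau$-tilting finite'' is false, and the paper's own Example \ref{TauTFwPDbutSID} is a counterexample to exactly this. So you cannot conclude that $\Gamma$ is $\tau$-tilting finite, hence cannot conclude that $H$ is of Dynkin type, and the whole chain collapses. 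Two of your bridging claims are also unsupported as stated: (a) $\tau$-tilting finiteness of symmetric algebras is known to be invariant under stable equivalences \emph{of Morita type}, not under arbitrary stable equivalences, and the hypothesis here is only an arbitrary stable equivalence; (b) ``two algebras of finite global dimension are derived equivalent iff their trivial extensions are'' is not a theorem --- only the forward direction is (Rickard), and a stable equivalence of self-injective algebras does not in general lift to a derived equivalence. You flag (b) yourself, but the proposed repairs (restricting to Morita type, or invoking finiteness of the derived equivalence class) are either changes of hypothesis or circular, and the ``coarse'' fallback via representation-directedness is not actually carried out.

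The paper's proof takes a different and much shorter route that avoids all of this. After reducing to the case where $H$ is genuinely hereditary (replace $\Gamma$ by the stably equivalent trivial extension of the hereditary algebra, using that derived equivalences of symmetric algebras induce stable equivalences), it invokes the theorem of Peng--Xiao \cite{PX}: a symmetric algebra stably equivalent to the trivial extension of a (iterated) tilted algebra of type $Q$ is itself \emph{isomorphic} to the trivial extension of an iterated tilted algebra of type $Q$. Thus $\Lambda\simeq T(C)$ with $C$ iterated tilted of type $Q$. Now the standing assumption on $A$ enters only through the Cartan matrix: $C_\Lambda=C_A+C_A^{\tr}$ is positive definite, and since it also equals $C_C+C_C^{\tr}$, which is $\Z$-congruent to the symmetrized Euler form of $Q$, the type $Q$ must be Dynkin. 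Then $\Lambda$ is representation-finite, and the classical characterization of representation-finite trivial extensions forces $A$ to be iterated tilted of Dynkin type, i.e.\ a PHAD. If you want to salvage your write-up, replace the transport-of-$\tau$-tilting-finiteness and lifting-to-derived-equivalence steps by the Peng--Xiao structure theorem; that is the missing key lemma.
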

\begin{proof}
Let $\Lambda$ and $\Gamma$ be the trivial extensions of an algebra $A$ and a derived hereditary algebra $B$, respectively.
We assume that $\Lambda$ and $\Gamma$ are stable equivalent.
Since $B$ is derived equivalent to a hereditary algebra,
we observe that $\Gamma$ is derived equivalent to the trivial extension of the algebra.
As a derived equivalence between symmetric algebras gives rise to a stable equivalence between them, we can suppose that $B$ is hereditary.
It follows from \cite{PX} that $\Lambda$ is isomorphic to the trivial extension of an iterated tilted algebra (with the same type as $B$).
This finishes the proof.
\end{proof}

We remark by the proofs that if algebras as in Proposition \ref{sinceredirecting} and \ref{stableequivalent} have positive definite Cartan matrices, then they are PHADs; that is, in the cases, the $\tau$-tilting finiteness is automatically implied.

An algebra $B$ is said to be \emph{accessible} from an algebra $A$ if there exists a sequence $A=:B_0, B_1,\cdots, B_r:=B$ of algebras such that $B_{i+1}$ is the one-point (co)extension of $B_i$ by an exceptional $B_i$-module. 

\begin{proposition}
The one-point extension of an algebra $A$ in $\H$ by an indecomposable module $M$ is in $\H$.
Hence, any accessible algebra from an algebra in $\H$ belongs to $\H$.
\end{proposition}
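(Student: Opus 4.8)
The plan is to prove that $\H$ is closed under one-point extensions by indecomposable modules, and then deduce the statement about accessible algebras by an immediate induction on the length $r$ of the defining sequence (each intermediate $B_{i+1}$ is a one-point extension or coextension of $B_i\in\H$, and coextensions are handled by passing to the opposite algebra, noting that $\H$, the relevant Cartan-matrix condition, $\tau$-tilting finiteness, and being a PHAD are all stable under $(-)^{\op}$). So the content is entirely in the one-point extension step. Recall that to show $B:=A[M]\in\H$ it suffices, by the very definition of $\H$, to show: \emph{if $B$ is $\tau$-tilting finite and has a positive definite Cartan matrix, then $B$ is a PHAD.} Thus I assume those two hypotheses on $B$ throughout.

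First I would record what the hypotheses give on $A$ itself. If $B=A[M]$ is $\tau$-tilting finite, then $A$ is $\tau$-tilting finite, since $A$ is a quotient of $B$ by the idempotent at the extension vertex and $\tau$-tilting finiteness passes to quotients by \cite[Theorem 5.12(d)]{DIRRT}. For the Cartan matrix: writing the extension vertex as the new vertex $0$, the Cartan matrix of $B$ has the block form $\left(\begin{smallmatrix} 1 & \underline{\dim}\,M \\ 0 & C_A\end{smallmatrix}\right)$ up to the conventions of the paper, so $C_B$ positive definite forces $C_A$ positive definite (a principal submatrix of a positive definite matrix is positive definite — here one must be slightly careful because $C_B$ need not be symmetric, but the relevant quadratic/Euler form restricts, and $C_A$ inherits nonsingularity and positive definiteness of the Euler form exactly as in condition (2) of the introduction). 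Hence $A$ is a $\tau$-tilting finite algebra with positive definite Cartan matrix, and since $A\in\H$, $A$ is a PHAD — in particular $A$ has finite global dimension and is derived equivalent to a hereditary algebra of Dynkin type.

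Next I would upgrade ``$A$ is a PHAD'' together with the hypotheses on $B$ to ``$B$ is a PHAD''. The key structural input is that a one-point extension $A[M]$ with $A$ of finite global dimension again has finite global dimension, and that one-point extensions interact well with piecewise hereditariness: by results on sincere algebras and tilted algebras (the line of \cite{ASS} used in Proposition \ref{sinceredirecting}, and Happel--Ringel-type one-point extension results), if $A$ is piecewise hereditary then $A[M]$ is piecewise hereditary of the \emph{same} representation-type class provided $A[M]$ has finite global dimension and — crucially — its Euler form is positive definite, which is exactly our hypothesis. Since the Euler form of $B$ is positive semidefinite only in the Dynkin case among the standard trichotomy (Dynkin / extended Dynkin / wild), positive definiteness pins down $B$ as piecewise hereditary of Dynkin type, i.e. a PHAD. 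Concretely I would argue: $B$ has finite global dimension, so its Euler form equals $x^{\tr}C_B^{-\tr}x$; this being positive definite and $B$ being (via the PHAD structure of $A$ and the one-point extension) piecewise hereditary forces the underlying hereditary algebra in its derived equivalence class to be of Dynkin type, since an extended-Dynkin or wild hereditary algebra has a non-positive-definite Euler form.

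The main obstacle I anticipate is the middle implication: knowing $A$ is a PHAD and $B=A[M]$ has finite global dimension, one still needs $B$ to be \emph{piecewise hereditary at all} before the positive-definiteness argument kicks in to force the Dynkin case. For a general module $M$ a one-point extension of a piecewise hereditary algebra need not be piecewise hereditary; what rescues us is that $M$ is indecomposable and that the positive-definiteness of $C_B$ (equivalently, the $\tau$-tilting finiteness forcing representation-directedness in the relevant components) severely constrains $M$ — in fact it should force $B$ to be representation-finite and representation-directed, hence by Happel's criteria simply-connected-type arguments $B$ is tilted, and a tilted algebra with positive definite Euler form is of Dynkin type, so a PHAD. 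So the real work is to verify that the hypotheses on $B$ make $B$ representation-directed; once that is in hand, the tilted $\Rightarrow$ PHAD step is the same two-line argument as in Proposition \ref{sinceredirecting}. I would structure the proof to isolate this ``$B$ representation-directed'' claim as the single nontrivial step, citing the representation-directedness of $\tau$-tilting finite algebras with only preprojective-type behaviour and Happel's Hochschild-cohomology criteria exactly as in the earlier propositions of this appendix, and leaving the two bookkeeping reductions (to $A$, and from tilted to Dynkin) as routine.
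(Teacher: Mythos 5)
Your skeleton coincides with the paper's: reduce the hypotheses to $A$ via $A\simeq e\widetilde{A}e$ (so $A$ is $\tau$-tilting finite with positive definite Cartan matrix, hence a PHAD because $A\in\H$), then show that $\widetilde{A}=A[M]$ is piecewise hereditary, and finally use positive definiteness of the Euler form to force Dynkin type. The first and last steps are correct and essentially identical to the paper's, and your treatment of coextensions by passing to opposite algebras is a reasonable way to get the ``accessible'' statement.

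The problem is the middle step, which you yourself isolate as ``the real work'' and then do not carry out. The paper closes it in one line by invoking Barot--Lenzing \cite[Theorem 1]{BL}: because $M$ is indecomposable and $A$ is piecewise hereditary, the one-point extension $A[M]$ is again piecewise hereditary --- no representation-directedness, Hochschild cohomology, or global-dimension considerations are needed. Your proposed substitute, namely that $\tau$-tilting finiteness together with a positive definite Cartan matrix should force $B$ to be representation-directed and hence tilted, is not a valid implication in general: Example \ref{TauTFwPDbutSID} of this very paper exhibits a representation-infinite (so certainly not representation-directed) $\tau$-tilting finite algebra with positive definite Cartan matrix. Even if representation-directedness were granted, ``representation-directed $\Rightarrow$ tilted'' does not follow from Proposition \ref{sinceredirecting} (which needs a \emph{sincere} directing indecomposable), and in the preprojective-component proposition of this appendix the additional hypothesis that the Coxeter matrix has trace $-1$ is used precisely to kill $HH^1$ before concluding simple connectedness. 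So as written your argument does not close; the missing ingredient is the Barot--Lenzing theorem (or some equivalent statement about one-point extensions of piecewise hereditary algebras by indecomposable modules), and without it the proof has a genuine gap.
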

\begin{proof}
Put $\widetilde{A}:=\begin{pmatrix} K & M \\ 0 & A\end{pmatrix}$ and assume that it is $\tau$-tilting finite with positive definite Cartan matrix.
Then, $A$ is isomorphic to $e\widetilde{A}e$ for a certain idempotent $e$ of $\widetilde{A}$, and so it is also $\tau$-tilting finite and has a positive definite Cartan matrix.
We derive from $A\in\H$ that $A$ is a PHAD.
As $M$ is indecomposable, applying the trick of Barot--Lenzing \cite[Theorem 1]{BL} leads to the fact that $\widetilde{A}$ is piecewise hereditary.
Since $\widetilde{A}$ has a positive definite Cartan matrix, it is a PHAD.
\end{proof}

Finally, let us give an analogue of this proposition.

\begin{proposition}
Let $A$ be a triangular algebra with a source or sink $e$ such that all summands of $A/AeA$ are in $\H$ and:
\begin{enumerate}[$(i)$]
\item $eA$ has a separated radical if $e$ is a source;
\item $eDA$ has a separated socle factor if $e$ is a sink.
\end{enumerate}
Then $A$ belongs to $\H$.
\end{proposition}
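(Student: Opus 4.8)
The plan is to reduce $A$ to a configuration already covered by the previous proposition and then to invoke the simply-connected case. Since $\H$ is closed under $(-)^\op$, and a sink of $A$ is a source of $A^\op$ with the ``separated socle factor of $eDA$'' condition turning into the ``separated radical of $eA$'' condition for $A^\op$, I may assume that $e$ is a source. Put $B:=A/AeA$ and suppose that $A$ is $\tau$-tilting finite with positive definite Cartan matrix; the goal is to show that $A$ is a PHAD.

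The first steps concern $B$. As a quotient of $A$, the algebra $B$ is $\tau$-tilting finite \cite[Theorem 5.12(d)]{DIRRT}; and since $e$ is a source with no loop, no oriented path of $A$ passes through $e$, so the Cartan matrix of $B$ is exactly the principal submatrix of the Cartan matrix of $A$ obtained by deleting the row and column indexed by $e$, hence positive definite. Thus each connected component $B_i$ of $B$ is a $\tau$-tilting finite algebra with positive definite Cartan matrix lying in $\H$, hence a PHAD, hence simply connected. Because $e$ is a source we have $A=B[\rad P_e]$, and connectedness of $A$ forces every $B_i$ to meet the support of $\rad P_e$; the separation hypothesis then says that the indecomposable summands of $\rad P_e$ have supports in pairwise distinct components, so in fact $\rad P_e=\bigoplus_i M_i$ with exactly one indecomposable $B_i$-module $M_i$ per component $B_i$. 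Consequently $A$ is the union of the convex subalgebras $A_i:=B_i[M_i]$ (the full subalgebra of $A$ on $\{e\}\cup B_i$), any two of which meet only in the vertex $e$.

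I would then apply the preceding proposition to each $A_i$: it is a quotient of $A$ (since $e$ is a source, distinct components $B_j$ do not communicate in $A$, so $A_i$ is the quotient of $A$ by the ideal generated by the idempotents outside $\{e\}\cup B_i$), hence $\tau$-tilting finite; its Cartan matrix is the principal submatrix of that of $A$ on $\{e\}\cup B_i$, hence positive definite; and $B_i\in\H$ with $M_i$ indecomposable. Therefore $A_i\in\H$, and being $\tau$-tilting finite with positive definite Cartan matrix it is a PHAD, hence simply connected. Finally, gluing simply connected triangular algebras along a single common source vertex yields a simply connected algebra (a van Kampen type statement; this is exactly where the separation condition is used, as it guarantees that no new cycle through $e$ is created), so $A$ is simply connected, whence $A\in\H$ by the simply-connected case treated above.

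The step I expect to be the main obstacle is the last paragraph: making precise, or locating references for, the facts that a PHAD is simply connected and that, for triangular algebras, gluing along a vertex takes free products of fundamental groups, together with the observation that simple-connectedness is presentation-independent here since $A$ is triangular. By comparison, the Cartan-matrix bookkeeping (behaviour under deleting a source), the $\tau$-tilting finiteness of quotients, and the reduction to the source case via $(-)^\op$ are all routine.
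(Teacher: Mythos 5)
Your overall bookkeeping agrees with the paper's: reduce to one of the two dual cases, decompose $A/AeA=\prod_i A_i$, use separation to write the connecting bimodule as $\bigoplus_i M_i$ with each $M_i$ indecomposable over $A_i$, check that $\tau$-tilting finiteness and positive definiteness descend to the pieces, and feed each $A_i[M_i]$ into the preceding one-point-extension proposition. Where you diverge is the concluding step, and this is where the genuine gap sits. The paper never glues fundamental groups: it forms the $\nu$-reflection $B$ at the sink, i.e.\ the factor of the one-point extension $\left(\begin{smallmatrix} K & eDA \\ 0 & A\end{smallmatrix}\right)$ by the ideal generated by $e$, identifies $B$ with the triangular matrix algebra having $K$ in the corner, the $M_i$ in the first row and $\prod_i A_i$ on the diagonal, applies the Barot--Lenzing trick \cite{BL} once to this single one-point extension to see that $B$ is piecewise hereditary, and then uses the derived equivalence $A\simeq_{\mathrm{der}} B$ of Tachikawa--Wakamatsu \cite{TW1} to transport both the Dynkin conclusion and the positive definiteness back to $A$. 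The whole argument therefore stays inside derived equivalence and needs only \cite{BL} and \cite{TW1}, whereas your route exits into covering theory and then re-enters through case (1) of the appendix (simply connected $\Rightarrow$ in $\H$, via \cite{AS2, W}).

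The obstacle you flag in your last paragraph is a real gap, not a formality. You need two unproved inputs: (a) every piecewise hereditary algebra of Dynkin type is simply connected, and (b) a van Kampen statement asserting that a triangular algebra obtained by gluing the bound quivers of the $A_i$ along the single source $e$ has fundamental group the free product of the $\pi_1(Q_i,I_i)$, \emph{for every admissible presentation} of $A$. Neither is established in the paper or in the references as cited; (a) is not a consequence of representation-directedness alone (representation-directed algebras with non-trivial fundamental group exist), and (b) has to contend with the presentation-dependence of $\pi_1(Q,I)$ -- you must argue that an arbitrary presentation of $A$ restricts to presentations of the $A_i$ with no minimal relation straddling two components, which does follow from $e$ being a source but needs to be said. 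Both points are likely true and fillable, so your argument is salvageable, but as written the proof is incomplete precisely at its last step; the reflection-functor argument is the device the paper uses to avoid having to fill it.
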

\begin{proof}
We only show the case (ii); the other can be handled similarly.

Write $A/AeA={\displaystyle\prod_{i=1}^r A_i}$ with $A_i$ ring-indecomposable and  $eDA/\soc(eDA)={\displaystyle\bigoplus_{i=1}^rM_i}$ such that $M_i$ is an $A_i$-module.
Since $eDA$ has a separated socle factor, we see that $M_i$ is indecomposable.
Now, let us assume that $A$ is $\tau$-tilting fintie and admits a positive definite Cartan matrix.
Then, $A/AeA\ (\simeq (1-e)A(1-e))$ is also $\tau$-tilting finite and has a positive definite Cartan matrix, whence so are $A_i$'s, and they are PHADs by assumption.

Let $B$ be the $\nu$-reflection at the sink; i.e., the factor algebra of the one-point extension $\begin{pmatrix} K & eDA \\ 0 & A\end{pmatrix}$ by the ideal $\left\langle\begin{pmatrix} 0 & 0 \\ 0 & e\end{pmatrix}\right\rangle$.
As the discussion above, we figure out that $B$ is isomorphic to the upper triangular matrix algebra
\[\begin{pmatrix}
K & M_1 & M_2 & \cdots & M_r \\
& A_1 &  &  &   \\
& & A_2 &  &  \\
&&& \ddots & \\
&&&& A_r
\end{pmatrix}.\]
Applying the trick of Barot--Lenzing \cite[Theorem 1]{BL}, it turns out that $B$ is piecewise hereditary.
Moreover, it follows from \cite{TW1} that $A$ and $B$ are derived equivalent, and so their Cartan matrices represent equivalent quadratic forms.
This implies that $B$ also has a positive definite Cartan matrix,
whence it is a PHAD; and, so is $A$.
Thus, $A\in\H$.
\end{proof}


\end{document}